\pgfplotsset{compat=newest}
\renewcommand\nomgroup[1]{%
  \item[\bfseries
  \ifstrequal{#1}{A}{Symbols}{%
  \ifstrequal{#1}{B}{Operators}{}}%
]}
\newtheorem{theorem}{Theorem}
\theoremstyle{plain}
\newtheorem{remark}{Remark}
\newtheorem{definition}{Definition}
\newtheorem{lemma}{Lemma}
\newtheorem{proposition}{Proposition}
\crefname{equation}{equation}{equations}
\Crefname{equation}{Equation}{Equations}
\newcommand{\mean}{m}
\newcommand{\pmean}{\widehat{m}}
\newcommand{\pp}{\widehat{\theta}}
\newcommand{\py}{\widehat{y}}
\newcommand{\ppy}{\widehat{y}}
\newcommand{\Cov}{C}
\newcommand{\pCov}{\widehat{C}}
\newcommand{\N}{\mathcal{N}}
\newcommand{\G}{\mathcal{G}}
\newcommand{\F}{\mathcal{F}}
\newcommand{\Fu}{\mathcal{F}_u}
\newcommand{\E}{\mathbb{E}}
\newcommand{\I}{\mathbb{I}}
\newcommand{\R}{\mathbb{R}}
\newcommand{\Z}{\mathbb{Z}}
\newcommand{\bbN}{\mathbb{N}}
\newcommand{\bbR}{\mathbb{R}}
\newcommand{\bbZ}{\mathbb{Z}}
\newcommand{\cB}{\mathcal{B}}
\newcommand{\cL}{\mathcal{L}}
\newcommand{\cM}{\mathcal{M}}
\newcommand{\bigO}{\mathcal{O}}
\DeclareMathOperator*{\argmin}{arg\,min}
\def\ps@pprintTitle{%
   \let\@oddhead\@empty
   \let\@evenhead\@empty
   \def\@oddfoot{\reset@font\hfil\thepage\hfil}
   \let\@evenfoot\@oddfoot
}
\definecolor{darkred}{rgb}{.7,0,0}
\definecolor{darkblue}{rgb}{0,0,.7}
\definecolor{darkgreen}{rgb}{0,.7,0}
\begin{document}

\begin{abstract}

This paper is focused on the optimization approach to
the solution of inverse problems. We introduce a stochastic 
dynamical system  in which the parameter-to-data map is embedded,
with the goal of employing techniques
from nonlinear Kalman
filtering to estimate the parameter given the data.
The extended Kalman filter (which we refer to
as ExKI in the context of inverse problems) can be effective
for some inverse problems approached this way, but is impractical
when the forward map is not readily differentiable and is given as a black box, and also
for high dimensional parameter spaces because of the need to propagate large covariance
matrices. Application of ensemble Kalman filters, for example use of
the ensemble Kalman inversion (EKI) algorithm, has emerged as a useful tool which overcomes both of these issues: it is
derivative free and works with a low-rank covariance approximation formed from
the ensemble. In this paper, we work with the ExKI, EKI, and a variant on EKI which we term unscented Kalman inversion (UKI). 

The paper contains two main contributions. 
Firstly, we identify a novel stochastic dynamical system 
in which the parameter-to-data
map is embedded. We present
theory in the linear case to show exponential convergence
of the mean of the filtering distribution to the solution of a 
regularized least squares problem. This is in
contrast to previous work in which the EKI has been employed 
where the dynamical system used leads to algebraic convergence to an unregularized problem.
Secondly, we show that the application of the UKI to this novel 
stochastic dynamical system yields improved inversion results, 
in comparison with the application of EKI to the same  novel stochastic dynamical system.

The numerical experiments include proof-of-concept linear examples and various applied nonlinear inverse problems: learning of
permeability parameters in subsurface flow; learning the damage field from structure deformation;
learning the Navier-Stokes initial condition from solution data at positive times;
learning subgrid-scale
parameters in a general circulation model~(GCM) from time-averaged
statistics.
\end{abstract}

\begin{keyword}
  Inverse Problem, Derivative-Free Optimization, Extended Kalman Methods,  
  Ensemble Kalman Methods, Unscented Kalman Methods, Interacting
  Particle Systems.
\end{keyword}

\begin{frontmatter}

  \title{Iterated Kalman Methodology For Inverse Problems}

  \author[rvt1]{Daniel~Zhengyu~Huang}
  \ead{dzhuang@caltech.edu}
  
  \author[rvt1]{Tapio Schneider}
  \ead{tapio@caltech.edu}

  \author[rvt1]{Andrew M. Stuart}
  \ead{astuart@caltech.edu}

  \address[rvt1]{California Institute of Technology, Pasadena, CA}

\end{frontmatter}

\nomenclature[A]{$\theta$}{unknown parameter vector} 
\nomenclature[A]{$y$}{observation vector} 
\nomenclature[A]{$\py$}{observation vector mean in prediction step} 
\nomenclature[A]{$Y$}{observation set} 
\nomenclature[A]{$\G$}{mapping between parameter space and observation space} 
\nomenclature[A]{$\F$}{filter} 
\nomenclature[A]{$t$}{time} 
\nomenclature[A]{$N_\theta$}{dimension of unknown parameter vector} 
\nomenclature[A]{$N_y$}{dimension of observation vector} 
\nomenclature[A]{$\eta$}{observation error} 
\nomenclature[A]{$\Sigma_{\eta}$}{observation error covariance}
\nomenclature[A]{$\omega$}{artificial evolution error} 
\nomenclature[A]{$\Sigma_{\omega}$}{artificial evolution error covariance}
\nomenclature[A]{$\nu$}{artificial observation error} 
\nomenclature[A]{$\Sigma_{\nu}$}{artificial observation error covariance}
\nomenclature[A]{$\mean$}{conditional mean} 
\nomenclature[A]{$\pmean$}{conditional mean in prediction step}
\nomenclature[A]{$\Cov$}{conditional covariance} 
\nomenclature[A]{$\pCov$}{conditional covariance in prediction step}
\nomenclature[A]{$\Phi$}{least-square function}
\nomenclature[B]{$c$}{unscented points weight}
\nomenclature[B]{$W$}{unscented points weight}
\nomenclature[B]{$\kappa$}{unscented points weight}
\nomenclature[B]{$a$}{unscented points weight}

\nomenclature[B]{$\beta$}{unscented points weight}
\nomenclature[B]{$\Box_{n}$}{$n$-th time step}
\nomenclature[B]{$\Box^{j}$}{$j$-th ensemble particle}
\nomenclature[B]{$\Box_{(i)}$}{$i$-th component}


\section{Introduction}
\label{sec:I}

\subsection{Overview}
\label{ssec:over}
This paper is devoted to optimization approaches to calibrating models with observational data. The basic problem is formulated as recovering unknown model parameters $\theta \in \bbR^{N_{\theta}}$ from
noisy observation $y \in \bbR^{N_y}$ given by
\begin{equation}
\label{eq:KI}
    y = \G(\theta) + \eta;
\end{equation}
here $\G$ denotes the parameter-to-data map which, for the applications
we have in mind, generally requires solving partial differential equations, and $\eta \sim \N(0,\Sigma_{\eta})$ denotes the Gaussian observation error.
Consider now the stochastic dynamical system 
\begin{subequations}
\label{eq:std}
  \begin{align}
  &\textrm{evolution:}    &&\theta_{n+1} = \alpha \theta_{n}  + (1 - \alpha) r_0 +  \omega_{n+1}, &&\omega_{n+1} \sim \N(0,\Sigma_{\omega}),\\
  &\textrm{observation:}  &&y_{n+1} = \G(\theta_{n+1}) + \nu_{n+1}, &&\nu_{n+1} \sim \N(0,\Sigma_{\nu}).
\end{align}
\end{subequations}
We assume that the artificial evolution error covariance $\Sigma_{\omega} \succ 0$, the artificial observation error covariance $\Sigma_{\nu} \succ 0$, and the regularization parameter $\alpha \in (0,1]$, whilst $r_0$ is an arbitrary vector.
\footnote{We write $A \succ 0$ when $A$ is strictly positive-definite, and will also write
$A \prec B$ when $B - A$ is strictly positive-definite and $A \preceq B$ when $B - A$ is positive semi-definite.}
We study methods to determine $\theta$ from $y$ given by \eqref{eq:KI}
by employing filtering methods to find 
$\theta_n$ given $Y_n:=\{y_\ell\}_{\ell=1}^n$, in the setting where $y_\ell \equiv y$ for all
$\ell \in \bbN.$

Note that dynamical system (\ref{eq:std}a) for $\theta_n$ has, for $\alpha \in (0,1)$,
statistical equilibrium given by the Gaussian $\N(r_0,(1-\alpha^2)^{-1}\Sigma_{\omega}).$
The output of this statistical model is then repeatedly exposed to the observations, expressed via (\ref{eq:std}b) with $y_{n+1}$ set to the data $y$, and hence it is intuitive that
filtering methods will deliver an estimate of $\theta$ solving \eqref{eq:KI} as $n
\to \infty.$ Such a method, in the special case $\alpha=1, \Sigma_{\omega}=0, \Sigma_{\nu} = \Sigma_{\eta}$, is the
basis of the ensemble Kalman inversion (EKI) algorithm as proposed in \cite{iglesias2013ensemble}.
The two main takeaway messages of this paper are firstly to highlight
the benefits of choosing $\alpha \in (0,1)$ and $\Sigma_{\omega} \succ 0$, and secondly to demonstrate
that application of the unscented Kalman filter improves on the ensemble Kalman filter, leading to unscented Kalman inversion (UKI). 

The primary issue with the choice $\alpha=1$ is that it leads to over-fitting for problems
in which $N_{\theta}>N_{y}$, as shown in \cite{iglesias2013ensemble}. One approach to
deal with this is to use an adaptive modification of the basic EKI algorithm, based
on an analogy with the Levenberg-Marquardt algorithm, as developed 
in \cite{iglesias2016regularizing}; however, this leads to a need for
stopping criterion and the area is still being developed \cite{iglesias2020adaptive}.   
Another approach is to build Tikhonov regularization
directly into the inverse problem, before applying a filtering algorithm to
\eqref{eq:std} with $\alpha=1, \Sigma_{\omega}=0$, an approach introduced 
in \cite{chada2020tikhonov}. However, this leads to an algorithm which requires the
inversion of covariance matrices on spaces of dimension $N_{\theta}+N_y$ which
is undesirable for many problems concerning inference about fields, where
$N_{\theta} \gg 1.$ This issue is removed if the continuum limit of the
algorithm is used \cite{chada2020tikhonov}. However, practical experience
with using time-steppers for continuum limits of ensemble Kalman filtering
algorithms is in its infancy and current implementations of the
methods in \cite{chada2020tikhonov,schillings2017analysis,garbuno2020interacting,garbuno2020affine,ding2020ensemble} are not competitive with algorithms which start directly from
a discrete time formulation.

Central to both the optimization and probabilistic approaches to inversion
is the regularized objective function $\Phi_R(\theta)$ defined by
\begin{subequations}
\label{eq:KI2}
\begin{align}
\Phi_R(\theta) &:= \Phi(\theta)+\frac{1}{2}\lVert\Sigma_{0}^{-\frac{1}{2}}(\theta - r_0) \rVert^2,\\
    \Phi(\theta) &:= \frac{1}{2}\lVert\Sigma_{\eta}^{-\frac{1}{2}}(y - \G(\theta)) \rVert^2,
\end{align}
\end{subequations}
where $\Sigma_{\eta} \succ 0$ normalizes the model-data misfit $\Phi$ by means of the known
error statistics of the noise, prior mean $r_0$ encodes prior information about $\theta$,
and prior covariance $\Sigma_{0} \succ 0$ normalizes the prior information.
We will connect the parameters of \eqref{eq:std} 
for $\theta$ to a form of regularization of the inverse
problem. In this context it is worth noticing that, for linear
problems, the implied Tikhonov regularization has implied mean $r_0$, whilst the implied covariance $\Sigma_0$ of the regularization term is defined implicitly via  limit of an iterative procedure. Parameter $\alpha \in (0,1)$ controls the size of the regularization effect;
and when $\alpha=1$ the regularization effect disappears,
along with dependence of (\ref{eq:std}a) on $r_0$. Thus $\alpha=1$ is useful primarily for over-determined problems.

\subsection{Our Contributions}
\label{ssec:cont}

We make the following contributions to the
study of the solution of inverse problems by means of filtering methods:
\begin{itemize}
    \item we introduce a filtering-based approach to solving the inverse problem \eqref{eq:KI}, based on the novel stochastic dynamical system formulation \eqref{eq:std};  
    
    \item by studying linear problems we demonstrate that the methodology induces a form
    of Tikhonov regularization  and we prove an exponential convergence of the algorithm 
    to the minimizer of the Tikhonov-regularized problem, in the linear case;
    
    \item we introduce a Gaussian approximation for the filtering distribution 
    defined by \eqref{eq:std} and, from it, derive extended Kalman, ensemble Kalman
    and unscented Kalman (ExKI, EKI and UKI respectively) algorithms for the inverse problem
    \eqref{eq:KI}, applicable in the general nonlinear case;
    
    \item the algorithms are tested on a wide range of problems, including linear test
    problems, inversion for spatial fields in a variety of continuum mechanics
    applications, and the learning of parameters in chaotic dynamical systems, using time-averaged data;
    
    \item we show that UKI outperforms EKI, with both employed
    in the context of the stochastic dynamical model \eqref{eq:std},
    for a wide range of inverse problems with unknown parameter space of moderate dimension.
     
    \end{itemize}

    Taken together, the theoretical framework we develop and the numerical results we present show that the UKI, applied to the stochastic dynamical system \eqref{eq:std}, is a competitive methodology for solving inverse problems and parameter estimation problems defined by an expensive  black-box forward model; indeed the UKI is shown to
    outperform the EKI in settings where the number 
    of parameters $N_{\theta}$ is of moderate size and the black-box is not
    readily differentiable so that ExKI methods are not applicable. Other 
    ensemble filters, such as the ensemble adjustment and ensemble transform
    Kalman filters could also be used in place of unscented Kalman filters, and
    similar performance is to be expected. This issue is explored in detail in
    \cite{huang2022efficient} where ideas introduced in this paper are developed further
    in order to approximate the Bayesian posterior distribution for inverse
    problem \eqref{eq:KI}. We note that, as with the use of most nonlinear variants 
    of the Kalman filter, rigorous justification beyond the linear setting is not
    currently available, but that our numerical results demonstrate effectiveness
    in a wide range of nonlinear inverse problems. The use of
    interacting particle systems to solve inverse problems with
    multimodal distributions, far from Gaussian, is considered
    in \cite{reich2021fokker} and the derivation of mean-field
    limits of ensemble Kalman methods for inversion, viewed
    as interacting particle systems is established in \cite{ding2021ensemble,ding2021ensembleb}.

We conclude this introductory section with a deeper literature review relating
to the contributions we make in this paper, in Subsection \ref{ssec:lit}.  Then, in Section \ref{sec:A} we
introduce a conceptual algorithm based on a Gaussian approximation of the filtering distribution
associated with \eqref{eq:std}; we then derive the ExKI, UKI, and EKI 
algorithms as approximations to this conceptual Gaussian algorithm.
 In Section \ref{sec:T} we study the methodology for linear problems,
obtaining insight into the regularization conferred by (\ref{eq:std}a);
we study the relationship of the methodology to other gradient-based optimization techniques; we derive continuous-time limits in the nonlinear setting. Section \ref{sec:V} describes variants on the basic conceptual algorithm that may be useful in some settings, and in Section \ref{sec:app} we present numerical
results demonstrating the performance of the inversion methodology
introduced in this paper.
The code relating to numerical experiments presented
in Section \ref{sec:app} is accessible online:
\begin{center}
  \url{https://github.com/Zhengyu-Huang/InverseProblems.jl}
\end{center}

\subsection{Literature Review}
\label{ssec:lit}

The focus of this paper is mainly on derivative-free inversion by means of iterative techniques aimed at
solving the optimization problem   
defined by minimization of $\Phi_R$, or
variants of this problem \cite{engl1996regularization}.
However, even in the optimization setting, the methods introduced in this paper
are closely related to iterative methods applied in Bayesian (probabilistic)
inversion. In the Bayesian approach to the inverse problem \eqref{eq:KI} \cite{kaipio2006statistical,dashti2013bayesian} the posterior distribution is given by
\begin{equation}
\label{eq:post}
    \mu(d\theta)=\frac{1}{Z}\exp\bigl(-\Phi(\theta)\bigr)\mu_0(d\theta),
\end{equation}
where $\mu_0=\N(r_0,\Sigma_{0})$ is the prior and $\mu$ is the posterior.
A commonly adopted iterative approach to
solving the problem of sampling from $\mu$ is the finite time 
approach known as sequential Monte Carlo
(SMC) -- see \cite{del2006sequential,chopin2020introduction}, and \cite{beskos2015sequential} for applications
to inverse problems. The basic idea, upon which there
are many variants, is to consider the sequence of
measures $\mu_n$ defined by
\begin{equation}
\label{eq:post-SMC}
    \mu_{n+1}(d\theta)=\frac{1}{Z_n}\exp\bigl(-h\Phi(\theta)\bigr) \mu_n(d\theta).
\end{equation}
Note, then, that if $Nh=1$ it follows that $\mu_N=\mu.$ Each step
$\mu_n \mapsto \mu_{n+1}$ may be approximated by a particle-based filtering algorithm, leading to a variety of algorithms used in practice, involving a fixed finite number of steps $N$. Furthermore, continuous-time limits of this methodology may also be derived
by taking $N \to \infty$ and $h \to 0$ with $Nh=1$, giving insight into the algorithms; see \cite{reich2011dynamical, ding2020ensemble}. 

On the other hand,
if $h=1$ is fixed and the measures $\mu_n$ are studied in the limit   $n \to \infty$, they
will tend to concentrate on minimizers of $\Phi$, restricted to the
support of $\mu_0$, as the following identity shows:
\begin{equation}
    \label{eq:iterate}
\mu_n(d\theta)=\frac{1}{\bigl(\Pi_{\ell=0}^{n-1}Z_{\ell}\bigr)}\exp\bigl(-n\Phi(\theta)\bigr)\mu_0(d\theta).
\end{equation}
This corresponds to an infinite time approach.

The finite time approach was developed for probabilistic problems;
the infinite time approach is focused  on optimization.
This paper will build on the latter, optimization, approach to the problem. However, we
note that, other than restriction of $\mu_n$ to the support of $\mu_0$, regularization is lost in this approach since it focuses on minimizing $\Phi(\cdot)$ and not $\Phi_R(\cdot)$.
To introduce regularization we consider the iteration
\begin{equation}
\label{eq:post-SMC2}
    \mu_{n+1}(d\theta)=\frac{1}{Z_n}\exp\bigl(-\Phi(\theta)\bigr)P_n \mu_n(d\theta).
\end{equation}
To address the issue of regularization, we will choose $P_n$ to be 
the Markov kernel
associated with a first-order autoregressive~(AR1) process as defined
by (\ref{eq:std}a); it is thus independent of $n: P_n\equiv P.$ 
The resulting dynamic on measures $\mu_n$ defined by \eqref{eq:post-SMC2} corresponds to the filtering
distribution for $\theta_n|Y_n$ defined by the stochastic
dynamical system \eqref{eq:std}. We note 
that within SMC $P_n$ is also introduced in a similar fashion in \eqref{eq:post-SMC}, but in that context it is chosen to be a $\mu_n-$invariant Markov kernel so that $P_n\mu_n=\mu_n$, typically
from MCMC; in this setting $P_n$ is indeed $n-$dependent.
Note that $\mu_n$ is not invariant with respect to $P$ with the AR1 choice we make: thus the introduction of $P_n$ in our setting differs
from its use in  SMC; this is because we are solving an optimization
problem via iteration over $n$, and not the sampling problem which
morphs the prior at time $n=0$ into the posterior at 
time $n=N.$ The specific choice of $P_n$ made in our work, namely
the Markov kernel $P$ defined by an AR1 process, is made in order to regularize the iterative optimization approach to inversion encapsulated in \eqref{eq:iterate}. 
Once we apply particle methods, the presence of
$P$ plays the role of avoiding ensemble collapse~\cite{schillings2017analysis,garbuno2020interacting,chada2020tikhonov}. We also note that, in contrast to SMC, the initial measure $\mu_0$ in \eqref{eq:post-SMC2} does not need to be 
the prior distribution -- it may be chosen arbitrarily, although
a natural choice is the stationary measure for the AR1 process.

In the case where $\G$ is linear, \eqref{eq:post-SMC2} delivers
a sequence of measures, which are defined through a Kalman filter.
Our analysis of the underlying filtering problem in Subsection
\ref{ssec:lin}, which considers the linear
Gaussian setting, thus constitutes an analysis of the Kalman filter
for a specific state-space model with a specific choice of
data. In order to deal with
a range of cases, including exponential convergence, algebraic
convergence and divergence of the mean/covariances of the
filter, we introduce an explicit unified analysis
of the Kalman filter in our setting. 
We note, however, that this is a well-trodden
field and that variants on some of our results can be obtained
from the existing literature \cite{lancaster1995algebraic,bougerol1993kalman}.

The method we introduce and study in this paper arises from the application of ideas 
from Kalman filtering to the problem of approximating the distribution 
of $\theta_n|Y_n$. The Kalman filter itself applies to the case
 of linear $\G$ \cite{kalman1960new,sorenson1985kalman}. When $\G$ is nonlinear the methods
 can be generalized by use of the extended 
 Kalman filter (ExKF) \cite{jazwinski2007stochastic} which is based on 
 linearization and application of Kalman methodology.
 However this method suffers from two drawbacks which hamper
 its application in many large-scale applications: (a) it
 requires a derivative of the forward map $\G(\cdot)$;  and (b) the approach scales poorly
 to high dimensional parameter spaces where $N_\theta \gg 1$, because of the
 need to sequentially update covariances in $\R^{N_\theta \times N_\theta}.$
Thus, despite an early realization that Kalman-based methods could
be useful for large-scale filtering problems arising in the
geosciences \cite{ghil1981applications},
the methods did not become practical in this context until the work
of Evensen~\cite{evensen1994sequential}. This revolutionary paper
introduced the ensemble Kalman filter (EnKF) the essence of
which is to avoid the linearization of the dynamics and sequential
updating of the covariance, and instead
use a low-rank approximation of the covariance found by maintaining
an ensemble of estimates for $\theta_n|Y_n$ at every step $n.$ These
ensemble Kalman methods have been widely adopted in the geosciences,
not only because they are effective for high dimensional parameter spaces,
but also because they are derivative-free, requiring only $\G$ as a black box.
Their use in the solution of inverse problems via iterative
methods was pioneered in subsurface inversion \cite{chen2012ensemble,emerick2013investigation} where the perspective of
fixing $h \ll 1$ and iterating until $n=N=1/h$ was used, so that $\mu_N$ is
viewed as an approximation of the posterior, provided $\mu_0$ is chosen
as the prior. These papers thus view the ensemble methodology as a way of sampling from
the posterior and have elements in common with SMC; this idea is also implicit in the
paper \cite{reich2011dynamical}, which is focused on data assimilation, and addresses
the solution of a Bayesian inverse problem each time new data is received.

In \cite{iglesias2013ensemble} the Kalman methodology for inversion was
revisited from the optimization perspective, based on fixing $h=1$
and iterating in $n$, leading to an algorithm we will refer to
as ensemble Kalman inversion (EKI). 
The paper \cite{iglesias2016regularizing} introduced a novel approach to regularizing the iterative method, by drawing an analogy
with the Levenberg-Marquardt algorithm (LMA) \cite{hanke1997regularizing}; see also \cite{iglesias2020adaptive}.
Subsequent variants on the iterative optimization approach demonstrate how to 
introduce Tikhonov regularization into the EKI algorithm \cite{chada2020tikhonov} and
the paper \cite{garbuno2020interacting} shows
that adding noise to the iteration can lead to approximate Bayesian inversion,
a method we will refer to as ensemble Kalman sampling (EKS) and which is further
analyzed in \cite{garbuno2020affine,nusken2019note}.  The EKS
provides a different approach to the problem of Bayesian inversion 
from the ones pioneered in \cite{chen2012ensemble,emerick2013investigation}
since it does not require starting with draws from the prior $\mu_0$, but instead
relies on ergodicity and iteration to large $n$; the methods in
\cite{chen2012ensemble,emerick2013investigation} must be started with draws from the prior $\mu_0$ and iterated for precisely $n=1/h$ steps, and are hence more rigid in their
requirements.
Since the ensemble methods do not, in general, accurately approximate the true posterior distribution~\cite{law2012evaluating,ernst2015analysis} outside Gaussian scenarios, the derivative-free optimization perspective is arguably a more
natural avenue within which to analyze ensemble inversion.
However recent work demonstrates how a derivative-free multiscale stochastic sampling
method can usefully take the output of EKS as a preconditioner for a method which
provably approximates the true posterior distribution \cite{pavliotis21derivative};
in that context, the EKS is central to making the method efficient.
Furthermore, in recent interesting work, it has been shown
how to reweight ensemble Kalman methods to recover statistical
consistency in the non-Gaussian setting \cite{ding2020ensemble};
however computation of the weights requires gradients of $\G$ and
hence is not practical for many of the problems where ensemble methods
are most useful.

Within the control theory literature, and parallel to the development of
the ensemble Kalman filter, the unscented Kalman filter (UKF) was introduced
\cite{julier1995new,wan2000unscented}. Like the ensemble Kalman methods, this method also 
sidesteps the need
to sequentially update the  derivative of the forward model as part of
the covariance update; but, in the primary difference from ensemble Kalman methods, particles (sigma points) are chosen deterministically, and a  quadrature rule is applied 
within a Gaussian approximation of the filter. 
This paper is to establish a framework for the development of
unscented Kalman methods for inverse problems, based on \eqref{eq:std}:
we formalize and demonstrate the power of unscented Kalman inversion (UKI)
techniques. We also formalize extended Kalman inversion (ExKI) as a general purpose methodology for parameter learning and derive ExKI, UKI, and UKI as different approximations of a conceptual Gaussian methodology for the
(in general non-Gaussian) filtering problem defined by \eqref{eq:std}.

Inverse and parameter estimation problems are ubiquitous in engineering and scientific applications. Applications that motivate this work include global climate model calibration~\cite{sen2013global,schneider2017earth,dunbar2020calibration}, material constitutive relation calibration~\cite{huang2020learning,xu2020learning,avery2020computationally}, seismic inversion in geophysics~\cite{russell1988introduction,bunks1995multiscale}, and medical tomography~\cite{toger2020blood,trigo2004electrical}.
These problems are generally highly nonlinear, may feature multiple scales, and may
include chaotic and turbulent phenomena. Moreover, the observational data is often noisy and the inverse problem may be ill-posed. We note, also, that a number
of inverse problems of interest may involve a moderate number of unknown
parameters $N_{\theta}$, yet may involve the solution of a very expensive
forward model $\G$ depending on those parameters; furthermore, $\G$ may not be differentiable with respect to the parameters, or may be complex
to differentiate as it is given as a black box.

In the nonlinear setting of state estimation, there are three primary types of Kalman filters~\cite{simon2006optimal, auger2013industrial,fang2018nonlinear}: the extended Kalman filter~(ExKF), the unscented Kalman filter~(UKF), and the ensemble Kalman filter~(EnKF).
The use of Kalman based methodology as a non-intrusive iterative method for 
parameter estimation originates in the papers~\cite{singhal1989training, puskorius1991decoupled} which were based on the ExKF, hence requiring
derivative $d\G$, and its adjoint, to propagate covariances; the use
of derivative-free ensemble methods was then developed systematically in the
papers \cite{chen2012ensemble,emerick2013investigation}, in the
SMC context, followed by the iterate for optimization EKI approach \cite{iglesias2013ensemble}.
Derivative-free ensemble inversion and parameter estimation
are particularly suitable for complex multiphysics problems requiring coupling of different solvers, such as fluid-structure interaction~\cite{huang2018simulation,huang2019high,huang2020modeling,huang2020high} and general circulation 
models \cite{adcroft2019gfdl} and methods containing discontinuities
such as the immersed/embedded boundary method~\cite{peskin1977numerical,berger2012progress,huang2018family,huang2020embedded} and 
adaptive mesh refinement~\cite{berger1989local,borker2019mesh}. 
Furthermore, derivative-free ensemble inversion and parameter estimation has
been demonstrated to be effective in the context of forward models defined
by chaotic dynamical systems~\cite{cleary2020calibrate} 
where adjoint-based methods fail to deliver meaningful sensitivities~\cite{lea2000sensitivity,wang2014least}.
These wide-ranging potential applications form motivation for developing
other derivative-free Kalman based inversion and parameter estimation techniques,
and in particular, the unscented Kalman methods developed here.

There is already some work in which unscented Kalman methods are used
for parameter inversion.
Extended, ensemble and unscented Kalman inversions have been applied to train neural networks~\cite{singhal1989training, puskorius1991decoupled, wan2000unscented, kovachki2019ensemble} and EKI has been applied in the oil industry~\cite{oliver2008inverse, chen2012ensemble, emerick2013investigation}. Dual and joint Kalman filters~\cite{wan1997neural, wan2000unscented} have been 
designed to simultaneously estimate the unknown states and the parameters~\cite{wan1997neural,parlos2001algorithmic,wan2000unscented,gove2006application,albers2017personalized} from noisy sequential observations.
 However, whilst the EKI has been systematically developed and analyzed as
 a general purpose methodology for the solution of inverse and parameter
 estimation problems, the same is not the case for UKI.

Continuous-time limits and gradient flow structure of the EKI have been introduced and studied in \cite{reich2011dynamical,bergemann2012ensemble,schillings2017analysis,schillings2018convergence,ding2019ensemble,ding2021ensemble,ding2021ensembleb}. This work led to the development of variants on the EKI, such as the  Tikhonov-EKI (TEKI)~\cite{chada2020tikhonov} and the EKS~\cite{garbuno2020interacting}. We will develop study of continuous-time limits
for the UKI, and variants including an unscented Kalman sampler (UKS), in this paper. There are interesting links to
the Levenberg–Marquardt Algorithm (LMA)~\cite{bell1993iterated,hanke1997regularizing}, as introduced in~\cite{iglesias2016regularizing} and developed further in~\cite{chada2019convergence,chada2020iterative,iglesias2020adaptive}. We  will further refine the idea, which provides insights into understanding and improving the nonlinear Kalman inversion
methodology as introduced here.
 
 Finally, we mention that there are other derivative-free optimization techniques which are
 based on interacting particle systems, but are not Kalman based. Rather these methods are
 based on consensus-forming mean-field models, and their particle approximations,
 leading to consensus-based optimization \cite{carrillo2018analytical} and consensus-based
 sampling \cite{carrillo2021}. The paper \cite{pavliotis21derivative} also provides an
 alternative derivative-free approach to optimization and sampling for inverse problems,
 using ideas from multiscale dynamical systems.

\section{Nonlinear Kalman Inversion Algorithms}
\label{sec:A}

Recall that the basic approach to inverse problems that we adopt in this paper
is to pair the parameter-to-data relationship encoded in \eqref{eq:KI}
with a stochastic dynamical system for the parameter, resulting in
\eqref{eq:std}. We then employ techniques
from filtering to approximate the distribution $\mu_n$ of $\theta_n|Y_n$. 
A useful way to think of updating $\mu_n$ is through the prediction and
analysis steps~\cite{reich2015probabilistic,law2015data}: $\mu_n 
\mapsto \hat{\mu}_{n+1}$, and then 
$\hat{\mu}_{n+1} \mapsto \mu_{n+1}$, where $\hat{\mu}_{n+1}$ is the distribution
of $\theta_{n+1}|Y_n$. In Subsection \ref{ssec:gau} we first introduce
a Gaussian approximation of the analysis step, leading to an algorithm which
maps the space of Gaussian measures into itself at each step of the iteration; 
it is not implementable in general, but it is a useful conceptual algorithm. Subsection \ref{ssec:exki}
shows how this algorithm can be made practical, for low to moderate
dimension $N_{\theta}$ and assuming that $d\G$ is available, by means of the
ExKF, a form of linearization of the conceptual algorithm; we refer to this as ExKI. 
In Subsection \ref{ssec:uki} we show how the UKI algorithm
may be derived by applying a quadrature rule to
evaluate certain integrals appearing in the conceptual Gaussian approximation.
Subsection \ref{ssec:eki} connects the conceptual algorithm with the EKI, an
approach in
which ensemble approximation of the integrals is used.

\subsection{Gaussian Approximation}
\label{ssec:gau}

This conceptual algorithm maps Gaussians into Gaussians, and henceforth it is referred to as the Gaussian Approximation Algorithm (GAA).
Assume that $\mu_n \approx \N(m_n,C_n)$. The GAA is a mapping
from $(m_n,C_n)$ into $(m_{n+1},C_{n+1})$ which reduces to the
Kalman filter in the linear setting. The
algorithm proceeds by determining the joint distribution
of $\theta_{n+1}, y_{n+1}|Y_n$, assuming that $\theta_{n}|Y_n$ is Gaussian $\N(m_{n},C_{n})$. We then project 
\footnote{We refer to this as ``projection'' because it corresponds
to finding the closest Gaussian $p$ to the joint distribution of
$\theta_{n+1}, y_{n+1}|Y_n$ with respect to variation in the
second argument of the (nonsymmetric) Kullback-Leibler divergence
\cite{sanz2018inverse}[Theorem 4.5].}
this joint distribution onto a Gaussian by computing
its mean and covariance. And finally, we compute the conditional distribution of this joint Gaussian on observed $y_{n+1}$ to obtain
a Gaussian approximation  $\N(m_{n+1},C_{n+1})$ to $\mu_{n+1},$
the distribution of $\theta_{n+1}|Y_{n+1}.$

The projection of the joint distribution of  $\{\theta_{n+1}, y_{n+1}\}|Y_{n}$ onto a Gaussian distribution has the form
\begin{equation}
\label{eq:KF_joint}
     \N\Bigl(
    \begin{bmatrix}
    \pmean_{n+1}\\
    \py_{n+1}
    \end{bmatrix}, 
    \begin{bmatrix}
   \pCov_{n+1} & \pCov_{n+1}^{\theta y}\\
    {{\pCov_{n+1}}^{\theta y}}{}^{T} & \pCov_{n+1}^{yy}
    \end{bmatrix}
    \Bigr);
\end{equation}
we now define all the components of the mean and covariance.
Note that, under (\ref{eq:std}a),
$\hat{\mu}_{n+1}$ is also Gaussian if $\mu_n$ is Gaussian.
The use of (\ref{eq:std}a) shows that
\begin{equation}
\label{eq:KF_pred_mean}
\begin{split}
    \pmean_{n+1} &= \E[\theta_{n+1}|Y_n] =  \alpha\mean_n + (1 - \alpha)r_0,\\
    \pCov_{n+1} &= \mathrm{Cov}[\theta_{n+1}|Y_n] =  \alpha^2 \Cov_{n} + \Sigma_{\omega}.
\end{split}
\end{equation}
Then, with $\E$ denoting expectation with respect to 
$\theta_{n+1}|Y_n \sim \N( \pmean_{n+1},\pCov_{n+1})$, we have
\begin{equation}
\label{eq:KF_joint2}
\begin{split}
    \py_{n+1} =     & \E[\G(\theta_{n+1})|Y_n], \\
     \pCov_{n+1}^{\theta y} =     &  \mathrm{Cov}[\theta_{n+1}, \G(\theta_{n+1})|Y_n],\\
    \pCov_{n+1}^{yy} = &  \mathrm{Cov}[\G(\theta_{n+1})|Y_n] + \Sigma_{\nu}.
\end{split}
\end{equation}
Computing the conditional distribution of the joint Gaussian in \eqref{eq:KF_joint} to find $\theta_{n+1}|\{Y_n,y_{n+1}\}=\theta_{n+1}|Y_{n+1}$ gives the following
expressions for the mean $\mean_{n+1}$ and covariance $\Cov_{n+1}$ of the
approximation to $\mu_{n+1}:$
\begin{equation}
\label{eq:KF_analysis}
    \begin{split}
        \mean_{n+1} &= \pmean_{n+1} + \pCov_{n+1}^{\theta y} (\pCov_{n+1}^{yy})^{-1} (y_{n+1} - \py_{n+1}),\\
         \Cov_{n+1} &= \pCov_{n+1} - \pCov_{n+1}^{\theta y}(\pCov_{n+1}^{yy})^{-1} {\pCov_{n+1}^{\theta y}}{}^{T}.
    \end{split}
\end{equation}%

Equations \eqref{eq:KF_pred_mean}, \eqref{eq:KF_joint2}
and \eqref{eq:KF_analysis}  define the GAA. 
As  a  method  for  solving  the  inverse  problem  \eqref{eq:KI},  the  GAA  is  implemented  by  assuming  all observations $\{y_n\}$ are  identical  to $y$ and  iterating  in $n.$ With this assumption, we may write the algorithm 
as 
\begin{equation}
\label{eq:themap}
(m_{n+1},C_{n+1})=F(m_n,C_n;\G,r_0,\Sigma_{\omega}),    
\end{equation}
noting that the mapping is dependent on $\G$ and on the
mean and covariance of the assumed auto-regressive dynamics
for $\{\theta_n\}$.
\footnote{
$F$ also depends on $\alpha$ and
$\Sigma_{\nu}$ but we suppress this dependence for economy
of notation; the highlighted dependence is what is relevant
in Proposition \ref{prop:affine-invariance}.
}

In the setting where $\G$ is linear, the Gaussian ansatz used in the derivation of the
conceptual algorithm is exact, the integrals appearing in \eqref{eq:KF_joint2} have closed form, and the algorithm
reduces to the Kalman filter applied to \eqref{eq:std}, with a particular assumption on the data stream $\{y_n\}$.
In Subsection \ref{ssec:lin} we will show, again in the setting
where $\G$ is linear,
that the mean of this iteration converges to a minimizer of
$\Phi_R$ given by \eqref{eq:KI2}, in which the prior covariance of the regularization  $\Sigma_0$ is defined by solution of a linear equation depending on the choices of $\alpha$, $\Sigma_\omega$, and $\Sigma_{\nu}$, as well as on $\G.$

In the nonlinear setting, to make an implementable algorithm from the GAA encapsulated in~\cref{eq:KF_pred_mean,eq:KF_joint2,eq:KF_analysis}, 
it is necessary to approximate the integrals appearing in \eqref{eq:KF_joint2}.
When extended, unscented and ensemble Kalman filters are applied, respectively, to make such approximation, we  obtain the ExKI, UKI, and EKI algorithms. 
The extended, unscented, and ensemble approaches to this are detailed in the
following three subsections. Underlying all of them
is the following property of the GAA encapsulated in
Proposition \ref{prop:affine-invariance}.

We recall the idea of affine invariance, introduced
for MCMC methods in \cite{goodman2010ensemble},  motivated by the attribution of the empirical success
of the Nelder-Mead algorithm \cite{nelder1965simplex}
for optimization to a similar property; further development of the method in the context of sampling
algorithms may be found in \cite{leimkuhler2018ensemble,garbuno2020affine}.
In words an
iteration is affine invariant if an invertible linear transformation of the variable being iterated
makes no difference to the algorithm and hence to the convergence properties of
the algorithm; this has the desirable consequence that
performance of the method is independent of the aspect ratio in highly anisotropic objective functions.

Consider the invertible mapping from $x \in \R^{N_{\theta}}$ to ${}^{*}x \in \R^{N_\theta}$ defined
by ${}^{*}x = Ax + b$. Then define
${}^{*}\G(\theta) =  \G\big(A^{-1}(\theta - b)\big)$,
${}^{*}r_0 = Ar_0 + b$ and  ${}^{*}\Sigma_{\omega} = A\Sigma_{\omega} A^T.$

\begin{proposition}
\label{prop:affine-invariance}
Define, for all $n \in \Z^{0+}$,
\begin{equation*}
{}^{*}\mean_n =  A\mean_n + b \qquad {}^{*}\Cov_{n} = A\Cov_{n} A^T.
\end{equation*}
Then
\begin{equation}
 ({}^{*}\mean_{n+1},{}^{*}\Cov_{n+1}) = 
F({}^{*}\mean_n,{}^{*}\Cov_{n};{}^{*}\G,{}^{*}r_0,{}^{*}\Sigma_{\omega}).
\end{equation}
\end{proposition}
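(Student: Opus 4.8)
The plan is to verify by direct substitution that each of the three stages of the GAA — the prediction step \eqref{eq:KF_pred_mean}, the joint-moment step \eqref{eq:KF_joint2}, and the analysis step \eqref{eq:KF_analysis} — commutes with the affine change of variables $\theta \mapsto {}^{*}\theta = A\theta + b$. The guiding observation is twofold. First, this change of variables pushes the Gaussian $\N(\pmean_{n+1},\pCov_{n+1})$ forward to $\N(A\pmean_{n+1}+b, A\pCov_{n+1}A^T)$. Second, by the very definition ${}^{*}\G(\theta) = \G\bigl(A^{-1}(\theta - b)\bigr)$, evaluating the transformed forward map at the transformed argument returns the original map: if ${}^{*}\theta_{n+1} = A\theta_{n+1} + b$ then ${}^{*}\G({}^{*}\theta_{n+1}) = \G(\theta_{n+1})$. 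Crucially, the observation space, the data $y_{n+1}$, and the artificial observation covariance $\Sigma_{\nu}$ are untouched by the transformation; only the parameter space is relabelled.

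First I would treat the prediction step. Using ${}^{*}\mean_n = A\mean_n + b$, ${}^{*}r_0 = Ar_0 + b$ and linearity of (\ref{eq:std}a) one obtains ${}^{*}\pmean_{n+1} = A\pmean_{n+1} + b$; likewise ${}^{*}\pCov_{n+1} = \alpha^2 A\Cov_n A^T + A\Sigma_{\omega} A^T = A\pCov_{n+1}A^T$, using ${}^{*}\Cov_n = A\Cov_n A^T$ and ${}^{*}\Sigma_{\omega} = A\Sigma_{\omega} A^T$. Thus the predicted parameter mean transforms affinely and the predicted parameter covariance transforms by congruence, which is exactly what the starred definitions require at step $n+1$.

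Next I would handle the joint moments. Since ${}^{*}\G({}^{*}\theta_{n+1}) = \G(\theta_{n+1})$ as random variables, the predicted data mean is invariant, ${}^{*}\py_{n+1} = \py_{n+1}$, and the predicted data covariance is invariant, ${}^{*}\pCov_{n+1}^{yy} = \pCov_{n+1}^{yy}$ — here the fact that $\Sigma_{\nu}$ is not transformed is what makes the invariance exact. For the cross-covariance, the additive shift $b$ drops out of the covariance and the factor $A$ pulls out of the first slot, giving ${}^{*}\pCov_{n+1}^{\theta y} = A\pCov_{n+1}^{\theta y}$.

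Finally I would assemble the analysis step. Substituting the transformed moments into \eqref{eq:KF_analysis}, the Kalman gain ${}^{*}\pCov_{n+1}^{\theta y}({}^{*}\pCov_{n+1}^{yy})^{-1} = A\pCov_{n+1}^{\theta y}(\pCov_{n+1}^{yy})^{-1}$ carries a single factor of $A$, the innovation $y_{n+1} - {}^{*}\py_{n+1} = y_{n+1} - \py_{n+1}$ is unchanged, and one reads off ${}^{*}\mean_{n+1} = A\mean_{n+1} + b$ and ${}^{*}\Cov_{n+1} = A\Cov_{n+1}A^T$, where in the covariance update the two factors $A$ and $A^T$ bracket the original expression. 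These are precisely the starred quantities at step $n+1$, establishing the identity. The computation is entirely routine; the only point requiring care — and the closest thing to an obstacle — is the bookkeeping of the asymmetry between parameter space, which is transformed, and observation space, which is not, since this is exactly what keeps $\py_{n+1}$, $\pCov_{n+1}^{yy}$ and the innovation invariant while $\pmean_{n+1}$, $\pCov_{n+1}$ and $\pCov_{n+1}^{\theta y}$ pick up the appropriate factors of $A$ and $b$.
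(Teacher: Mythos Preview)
Your proposal is correct and follows essentially the same approach as the paper's own proof: both proceed by direct substitution through the prediction step, the joint-moment step, and the analysis step, verifying in turn that ${}^{*}\pmean_{n+1} = A\pmean_{n+1}+b$, ${}^{*}\pCov_{n+1} = A\pCov_{n+1}A^T$, ${}^{*}\py_{n+1} = \py_{n+1}$, ${}^{*}\pCov_{n+1}^{\theta y} = A\pCov_{n+1}^{\theta y}$, ${}^{*}\pCov_{n+1}^{yy} = \pCov_{n+1}^{yy}$, and then assembling the analysis update. Your write-up is in fact slightly more explicit than the paper's about why $\Sigma_\nu$ being untransformed is what makes $\pCov_{n+1}^{yy}$ invariant, but the argument is the same.
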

\begin{proof}
The proof is in \ref{sec:appendix}.
\end{proof}

The key observation of the previous theorem is that the same
map $F$ applies in the new coordinates.
This establishes the property of affine invariance,
noting that only $\G,r_0,\Sigma_{\omega}$ need to be transformed as the affine map applies only on the
signal space for $\{\theta_n\}$ and not the observation space for $\{y_n\}.$

\subsection{Extended Kalman Inversion}
\label{ssec:exki}

Consider the GAA defined by~\cref{eq:KF_pred_mean,eq:KF_joint2,eq:KF_analysis}.
The ExKI algorithm follows from invoking the approximations
\begin{equation}
\label{eq:ExKI_Taylor}
    \G(\theta_{n+1}) \approx \G(\pmean_{n+1}) + d\G(\pmean_{n+1}) (\theta_{n+1} - \pmean_{n+1})
\end{equation}
in the analysis updates for the mean and covariance respectively. In
particular both the mean and the covariances in \eqref{eq:KF_joint2} can be evaluated in closed
form with the approximation (\ref{eq:ExKI_Taylor}). The 
approximations are valid if the fluctuations around the mean state are small,
say of $\mathcal{O}(\epsilon)\ll 1$, and all the covariances are $\mathcal{O}(\epsilon^2).$
This results in the following algorithm:

\begin{itemize}
    \item Prediction step :
    \begin{equation}
\label{eq:KF_pred_mean_ex}
\begin{split}
    \pmean_{n+1} = & \alpha\mean_n + (1 - \alpha)r_0,\\
    \pCov_{n+1} = & \alpha^2 \Cov_{n} + \Sigma_{\omega}.
\end{split}
\end{equation}
    \item Analysis step :
    \begin{equation}
    \label{eq:ExKI-1.2}
    \begin{split}
    &\py_{n+1} =     \G(\pmean_{n+1}), \\
    &\pCov_{n+1}^{\theta y} = \pCov_{n+1} d\mathcal{G}(\pmean_{n+1})^T,\\
    &\pCov_{n+1}^{yy} = d\mathcal{G}(\pmean_{n+1})\pCov_{n+1}d\mathcal{G}(\pmean_{n+1})^T + \Sigma_{\nu},\\
    &\mean_{n+1} = \pmean_{n+1} + \pCov_{n+1}^{\theta y}(\pCov_{n+1}^{yy})^{-1}\bigl(y - \py_{n+1}\bigr),\\
    &\Cov_{n+1} = \pCov_{n+1} - \pCov^{\theta y}_{n+1}(\pCov^{yy}_{n+1})^{-1}{\pCov^{\theta y}_{n+1}}{}^{T}.\\
    \end{split}
    \end{equation}
    \end{itemize}
    
This is a map of the form \eqref{eq:themap}, but with a different definition of $F$, now depending on $d\G$ as well as $\G$.

\subsection{Unscented Kalman Inversion}
\label{ssec:uki}

Like the ExKI, the UKI also approximates the GAA;
but it approximates the integrals appearing in Equations \eqref{eq:KF_joint2} by means of
deterministic quadrature rules which are exact when evaluating means and
covariances of variables defined as linear transformations of the
random variable in question. Both the ExKI and the UKI recover
the Kalman filter when $\G$ is linear.
We need the definition of the unscented transform~\cite{julier1995new,wan2000unscented}:

\begin{definition}[Modified Unscented Transform]
\label{def:unscented_tranform}
Consider Gaussian random variable $\theta \sim \N(\mean, \Cov) \in \R^{N_{\theta}}$. Define the $2N_{\theta}+1$ symmetric sigma points 
$\{\theta_j\}_{j=0}^{2N_\theta +1}$ by
\begin{equation}
\begin{split}
    \theta^0 &= \mean, \\
    \theta^j &= \mean + c_j [\sqrt{\Cov}]_j \quad (1\leq j\leq N_\theta),\\
    \theta^{j+N_\theta} &= \mean - c_j [\sqrt{\Cov}]_j\quad (1\leq j\leq N_\theta),
\end{split}
\end{equation}
where $[\sqrt{\Cov}]_j$ is the $j$th column of the Cholesky factor of $\Cov$. Let $\G_i, i=1,2$ denote any pair of real vector-valued
functions on $\R^{N_\theta}$. Then the quadrature rule approximating the mean and covariance of the transformed variables $\G_1(\theta)$ and $\G_2(\theta)$ is given by  
\begin{equation}
\label{eq:ukf}
    \E[\G_i(\theta)] \approx \G_i(\theta^0)\qquad 
    \mathrm{Cov}[\G_1(\theta),\G_2(\theta)]  \approx \sum_{j=1}^{2N_{\theta}} W_j^{c} (\G_1(\theta^j) - \E\G_1(\theta))(\G_2(\theta^j) - \E\G_2(\theta))^T. 
\end{equation}
Here these constant weights are, for any $a \in \R$, 
\begin{align*}
    &c_j = a\sqrt{N_\theta}~(j=1,\cdots,N_{\theta}) \quad  W_j^{c} =\frac{1}{2a^2 N_\theta}~(j=1,\cdots,2N_{\theta}).\\
\end{align*}
\end{definition}

\begin{lemma}
\label{lem:uki}
Let $\G_i, i=1,2$ denote any pair of real vector-valued
functions on $\R^{N_\theta}$.
If $\theta \sim \N(\mean, \Cov)$ then 
\begin{align*}
&\E[\G_i(\theta)] = \G_i(m) + \bigO(\|C\|),\\ 
&\mathrm{Cov}[\G_1(\theta),\G_2(\theta)] = \sum_{j=1}^{2N_{\theta}} W_j^{c} (\G_1(\theta^j) - \E\G_1(\theta))(\G_2(\theta^j) - \E\G_2(\theta))^T + \bigO(\|C\|^2);
\end{align*}
thus the modified unscented transform is first and second order accurate in approximating
means and covariances of  $\G_1(\theta)$ and $\G_2(\theta)$ with respect to small $\|C\|.$ 
Furthermore, if $\G_1$ and $\G_2$ are linear then the modified unscented transform is
exact for these quantities.
\end{lemma}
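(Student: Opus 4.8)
The plan is to prove all three assertions by Taylor-expanding $\G_1$ and $\G_2$ about the mean $m$ and playing the moment structure of the Gaussian against the exact algebraic identities built into the sigma-point weights. Throughout I would write $\theta = m + \xi$ with $\xi \sim \N(0,C)$, so that $\E[\xi]=0$, $\E[\xi\xi^T]=C$, every Gaussian moment of order $k$ scales like $\|C\|^{k/2}$, and in particular the odd moments vanish. I would also record that the sigma-point displacements $\delta^j := \theta^j - m = \pm c_j [\sqrt{C}]_j$ obey $\|\delta^j\| = \bigO(\|C\|^{1/2})$ and, crucially, $\delta^{j+N_\theta} = -\delta^j$, so that expansions at the sigma points are controlled by the same small parameter $\|C\|$ and inherit a reflection symmetry.

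For the mean estimate I would expand $\G_i(\theta) = \G_i(m) + d\G_i(m)\xi + \tfrac12 D^2\G_i(m)[\xi,\xi] + \bigO(\|\xi\|^3)$, writing $D^2\G_i(m)[\xi,\xi]$ for the vector of Hessian quadratic forms, and take expectations. The linear term drops because $\E[\xi]=0$, the Gaussian third moment vanishes, and the quadratic term contributes $\tfrac12\E\big[D^2\G_i(m)[\xi,\xi]\big]$, whose norm is $\bigO(\|C\|)$; the fourth-moment remainder is $\bigO(\|C\|^2)$. Hence $\E[\G_i(\theta)] = \G_i(m) + \bigO(\|C\|)$, which is exactly the claimed first-order accuracy of the rule $\E[\G_i(\theta)]\approx\G_i(\theta^0)=\G_i(m)$.

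The covariance estimate is the crux. On the true side, subtracting the mean gives $\G_i(\theta)-\E[\G_i(\theta)] = d\G_i(m)\xi + \bigO(\|C\|)$; the cross terms pairing the linear part against the quadratic correction are odd in $\xi$ and so vanish by the zero third moment, leaving $\mathrm{Cov}[\G_1(\theta),\G_2(\theta)] = d\G_1(m)\,C\,d\G_2(m)^T + \bigO(\|C\|^2)$. On the quadrature side, the single identity driving everything is $\sum_{j=1}^{2N_\theta} W_j^c\,\delta^j(\delta^j)^T = C$, which I would verify directly from $W_j^c c_j^2 = \tfrac12$ together with the symmetric pairing, using $\sum_{j=1}^{N_\theta}[\sqrt{C}]_j[\sqrt{C}]_j^T = \sqrt{C}\sqrt{C}^T = C$. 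Expanding each factor as $\G_i(\theta^j)-\E[\G_i(\theta)] = d\G_i(m)\delta^j + \bigO(\|C\|)$ and inserting it into the weighted sum, the leading contribution is $d\G_1(m)\big(\sum_j W_j^c\,\delta^j(\delta^j)^T\big)d\G_2(m)^T = d\G_1(m)\,C\,d\G_2(m)^T$, matching the true covariance at order $\|C\|$. Since both expressions equal $d\G_1(m)\,C\,d\G_2(m)^T + \bigO(\|C\|^2)$, they agree up to $\bigO(\|C\|^2)$, giving the claimed second-order accuracy.

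The step I expect to require the most care — and the reason the sigma points are chosen symmetrically — is showing that the intermediate contributions of order $\|C\|^{3/2}$ on the quadrature side genuinely cancel, rather than merely being absorbed into a weaker $\bigO(\|C\|^{3/2})$ bound. These terms pair one Jacobian factor $d\G_1(m)\delta^j$ against one Hessian (or mean-correction) factor, and are therefore odd in $\delta^j$; the reflection $\delta^{j+N_\theta}=-\delta^j$ together with the equal weights $W_j^c = W_{j+N_\theta}^c$ annihilates them termwise, exactly as the vanishing odd Gaussian moments do on the true side. Finally, the linear case is immediate: when $\G_i(\theta)=L_i\theta+b_i$ the Taylor series truncates with no remainder, so $\E[\G_i(\theta)]=\G_i(m)$ holds exactly and $\mathrm{Cov}[\G_1(\theta),\G_2(\theta)] = L_1 C L_2^T$ is reproduced exactly by the quadrature through the identity $\sum_j W_j^c\,\delta^j(\delta^j)^T = C$.
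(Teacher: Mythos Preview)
Your proposal is correct and follows essentially the same approach as the paper: Taylor-expand $\G_i$ about $m$, use vanishing odd Gaussian moments for the true covariance, and exploit the reflection symmetry $\delta^{j+N_\theta}=-\delta^j$ together with the identity $\sum_j W_j^c\,\delta^j(\delta^j)^T=C$ for the quadrature side. The paper carries out exactly this computation, writing the sigma-point expansion in symmetric pairs so that the odd (order $\|C\|^{3/2}$) cross terms cancel, just as you anticipated.
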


\begin{proof}
The proof is in \ref{sec:appendix}.
\end{proof}

\begin{remark}
The first and second order high order error terms, appearing
in the expressions for the mean and covariance respectively,
depend on derivatives of $\G_i$ at $m$ and hence, through
these derivatives and through $C$, on the 
parameter dimension $N_{\theta}$.
The original unscented transform leads to second order  accuracy in the mean as well as covariance~\cite{julier2000new}.  
The modification we employ here replaces the original second order approximation of the $\E[\G_i(\theta)]$ with its first order counterpart. We do this to
avoid negative weights; it also has ramifications for the optimization
process which we discuss in \cref{rem:UKI}.
In this paper, the hyper-parameter is chosen to be
$\displaystyle a=\min\{\sqrt{\frac{4}{N_\theta}},  1\}.$ We note that  the papers~\cite{julier2000new,wan2000unscented,fang2018nonlinear}, suggest 
using a small positive value of $a$. We find in the numerical examples considered in this paper that our proposed choice of $a$ outperforms the choice $a=\min\{\sqrt{\frac{4}{N_\theta}},  0.01\}$), which builds
in the idea of using a small positive value of $a$.
\end{remark}

Consider the algorithm defined by
\cref{eq:KF_pred_mean,eq:KF_joint2,eq:KF_analysis}.
By utilizing the aforementioned quadrature rule, we obtain the following UKI algorithm:
\begin{itemize}
\item Prediction step :
\begin{equation}
\label{eq:KF_pred_mean_u}
\begin{split}
    \pmean_{n+1} = & \alpha\mean_n +(1 - \alpha)r_0,\\
    \pCov_{n+1} = & \alpha^2 \Cov_{n} + \Sigma_{\omega}.
\end{split}
\end{equation}
    \item Generate sigma points :
    \begin{equation}
    \label{eq:uki_sigma-points}
    \begin{split}
    &\pp_{n+1}^0 = \pmean_{n+1}, \\
    &\pp_{n+1}^j = \pmean_{n+1} + c_j [\sqrt{\pCov_{n+1}}]_j \quad (1\leq j\leq N_\theta),\\ 
    &\pp_{n+1}^{j+N_\theta} = \pmean_{n+1} - c_j [\sqrt{\pCov_{n+1}}]_j\quad (1\leq j\leq N_\theta).
    \end{split}
    \end{equation}
\item Analysis step :
   \begin{equation}
   \label{eq:UKI-analysis}
   \begin{split}
        &\ppy^j_{n+1} = \mathcal{G}(\pp^j_{n+1}) \qquad \py_{n+1} = \ppy^0_{n+1},\\
         &\pCov^{\theta y}_{n+1} = \sum_{j=1}^{2N_\theta}W_j^{c}
        (\pp^j_{n+1} - \pmean_{n+1} )(\ppy^j_{n+1} - \py_{n+1})^T, \\
        &\pCov^{yy}_{n+1} = \sum_{j=1}^{2N_\theta}W_j^{c}
        (\ppy^j_{n+1} - \py_{n+1} )(\ppy^j_{n+1} - \py_{n+1})^T + \Sigma_{\nu},\\
        &\mean_{n+1} = \pmean_{n+1} + \pCov^{\theta y}_{n+1}(\pCov^{yy}_{n+1})^{-1}(y - \py_{n+1}),\\
        &\Cov_{n+1} = \pCov_{n+1} - \pCov^{\theta y}_{n+1}(\pCov^{yy}_{n+1})^{-1}{\pCov^{\theta y}_{n+1}}{}^{T}.\\
    \end{split}
    \end{equation}
\end{itemize}

This is again a map of the form \eqref{eq:themap}, but with a different definition of $F$; unlike the ExKF there is no dependence
on $d\G$, only on $\G$.

\subsection{Ensemble Kalman Inversion}
\label{ssec:eki}

This method differs fundamentally from the ExKI and UKI
in that it does not map the mean and covariance. Rather it works
with a set of particles whose dynamics at each step is predicted
using (\ref{eq:std}a) and then used to compute empirical
approximations of covariances. These in turn are used in the
analysis step. The entire algorithm maps the collection
$\{\theta_{n}^{j}\}_{j=1}^J$ into $\{\theta_{n+1}^{j}\}_{j=1}^J$.
However, in the large $J$ limit the mean and covariance updates
match those of the GAA.

Consider the algorithm defined by
\cref{eq:KF_pred_mean,eq:KF_joint2,eq:KF_analysis}.
The EKI approach to making this implementable is to work with an ensemble
of parameter estimates and approximate the covariances $\pCov_{n+1}^{\theta y}$
and $\pCov_{n+1}^{yy}$ empirically:
\begin{itemize}
  \item Prediction step :
  \begin{equation}
      \label{eq:EKI-prediction}
  \begin{split}
                      \pp_{n+1}^{j} &= \alpha\theta_{n}^{j}+ (1-\alpha)r_0  + \omega_{n+1}^{j},\\ 
                      \pmean_{n+1} &= \frac{1}{J}\sum_{j=1}^{J}\pp_{n+1}^{j}.
                   \end{split}
                   \end{equation}
  \item  Analysis step : 
  \begin{equation}
  \label{eq:EKI-analysis}
  \begin{split}
         &\ppy_{n+1}^{j} = \mathcal{G}(\pp_{n+1}^{j})  \qquad \py_{n+1} = \frac{1}{J}\sum_{j=1}^{J}\ppy_{n+1}^{j},\\
        &\pCov_{n+1}^{\theta y} = \frac{1}{J-1}\sum_{j=1}^{J}(\pp_{n+1}^{j} - \pmean_{n+1})(\ppy_{n+1}^{j} - \py_{n+1})^T,  \\
        &\pCov_{n+1}^{yy} = \frac{1}{J-1}\sum_{j=1}^{J}(\ppy_{n+1}^{j} - \py_{n+1})(\ppy_{n+1}^{j} - \py_{n+1})^T +\Sigma_{\nu}, \\
        &\theta_{n+1}^{j} = \pp_{n+1}^{j} + \pCov_{n+1}^{\theta y}\left(\pCov_{n+1}^{yy}\right)^{-1}(y - \ppy_{n+1}^{j} - \nu_{n+1}^{j}),\\
        &\mean_{n+1} = \frac{1}{J} \sum_{j=1}^{J} \theta_{n+1}^{j}.\\
  \end{split}
\end{equation}
\end{itemize}
Here the superscript $j = 1,\cdots,\ J$ is the ensemble particle index, $\omega_{n+1}^{j} \sim \N(0, \Sigma_{\omega})$ and  $\nu_{n+1}^{j} \sim \N(0, \Sigma_{\nu})$ are independent and identically distributed random variables with respect to both $j$ and $n$.

\begin{remark}
In~\cite{iglesias2013ensemble}, where the iterative EKI
was introduced, a slightly different stochastic dynamical formulation is used, extending the parameter space to include the image of the parameters under $\G$ and
then making a linear observation operator on the extended space. The resulting method
reduces to our setting with $\alpha=1$, $\Sigma_{\omega}=0$, and $\Sigma_{\nu}=\Sigma_{\eta}$ in the preceding algorithm.
In the next section we will demonstrate theoretically that choosing $\alpha \in (0,1)$ and
$\Sigma_{\omega} \succ 0$ is beneficial and hence that the version of EKI proposed in
this paper is superior to that in~\cite{iglesias2013ensemble}.
\end{remark}

\section{Theoretical Insights}
\label{sec:T}

Recall that we view the GAA as an underlying conceptual algorithm
which gives insight into the ExKI, UKI, and EKI algorithms. The ExKI is 
itself an approximation of the GAA, found by linearizing $\G$ around the predictive mean and the UKI and EKI algorithms are approximations
of the resulting ExKI. Thus study of the GAA and ExKI gives insights into
the UKI and EKI algorithms. This section is devoted to such studies.
In Subsection \ref{ssec:lin} we consider behaviour of the GAA in the linear
setting. In Subsection \ref{ssec:LMA},
we show that the ExKI may be viewed as a generalization of the LMA for optimization. Subsection \ref{ssec:UA}
exhibits an averaging property induced by the unscented approximation,
indicating how this may help in solving problems with rough energy
landscapes. And in Subsection \ref{ssec:cts} we study a continuous-time limit of the GAA, which may itself be approximated to obtain continuous-time limits of the ExKI, UKI, and EKI algorithms; this provides insight into
the discrete algorithms as implemented in practice.

\subsection{The Linear Setting}
\label{ssec:lin}

In the linear setting the stochastic dynamical system for state $\{\theta_n\}$
and observations $\{y_n\}$ is given by
\begin{subequations}
\label{eq:std99}
  \begin{align}
  &\textrm{evolution:}    &&\theta_{n+1} = \alpha\theta_{n} + (1-\alpha)r_0 +  \omega_{n+1}, &&\omega_{n+1} \sim \N(0,\Sigma_{\omega}),\\
  &\textrm{observation:}  &&y_{n+1} = G\theta_{n+1} + \nu_{n+1}, &&\nu_{n+1} \sim \N(0,\Sigma_{\nu}).
\end{align}
\end{subequations}
Thanks to the linearity, equations~(\ref{eq:KF_joint2}) reduce to 
\begin{align}
\label{eq:KF_joint2_linear}
    \py_{n+1} = G\mean_n, \quad \pCov_{n+1}^{\theta y} = \pCov_{n+1} G^T,\quad  \textrm{and} \quad \pCov_{n+1}^{yy} = G  \pCov_{n+1} G^T + \Sigma_{\nu}.
\end{align}
The update equations~(\ref{eq:KF_analysis}) become
\begin{equation}
\label{eq:KF_pred_mean2}
\begin{split}
    \pmean_{n+1} &= \alpha\mean_n + (1-\alpha)r_0,\\
    \pCov_{n+1} &=  \alpha^2 \Cov_{n} + \Sigma_{\omega},
\end{split}
\end{equation}
and
\begin{subequations}
\label{eq:Lin_KF_analysis}
  \begin{align}
        \mean_{n+1} &= \pmean_{n+1} + \pCov_{n+1} G^T (G  \pCov_{n+1} G^T + \Sigma_{\nu})^{-1} \Big(y - G\pmean_{n+1} \Big), \\
         \Cov_{n+1}&= \pCov_{n+1} - \pCov_{n+1} G^T(G  \pCov_{n+1} G^T + \Sigma_{\nu})^{-1} G \pCov_{n+1}. \label{eq:Lin_KF_analysis_cov}
\end{align}
\end{subequations}
We have the following theorem about the convergence of the 
GAA in the setting of the linear forward model:

\begin{theorem}
\label{th:lin_converge}
Assume that $\Sigma_{\omega}\succ 0$ and $\Sigma_{\nu}\succ 0.$ Consider the iteration
\eqref{eq:KF_pred_mean2}, \eqref{eq:Lin_KF_analysis} mapping
$(m_n,C_n)$ into $(m_{n+1},C_{n+1})$. Assume further 
that $\alpha \in (0,1)$ or that
$\alpha=1$ and $\text{Range}(G^T)=\R^{N_{\theta}}$.
Then the steady state equation of \cref{eq:Lin_KF_analysis_cov}
\begin{equation}
\label{eq:cov_steady}
\Cov_{\infty}^{-1} =  G^T\Sigma_{\nu}^{-1}G + (\alpha^2 \Cov_{\infty} + \Sigma_{\omega})^{-1}
\end{equation}
has a unique solution $\Cov_{\infty} \succ 0.$ 
The pair $(m_n,C_n)$
converges exponentially fast to  limit $(\mean_{\infty},\Cov_{\infty})$.
Furthermore the limiting mean $\mean_{\infty}$ is the minimizer
of the Tikhonov regularized least squares
functional $\Phi_R$ given by
\begin{equation}
\label{eq:KI299}
\Phi_R(\theta) := \frac{1}{2}\lVert\Sigma_{\nu}^{-\frac{1}{2}}(y - G\theta) \rVert^2 +
\frac{1 - \alpha}{2}\lVert \pCov_{\infty}^{-\frac{1}{2}}(\theta - r_0) \rVert^2,
\end{equation}
where 
\begin{equation}
\label{eq:add}
\pCov_{\infty} =\alpha^2 \Cov_{\infty} + \Sigma_{\omega}.
\end{equation}
\end{theorem}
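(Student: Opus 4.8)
The plan is to decouple the covariance recursion from the mean recursion, exploiting the fact that the covariance update \eqref{eq:Lin_KF_analysis_cov} is autonomous: it involves neither $\mean_n$ nor the data $y$. First I would rewrite the analysis covariance update in information form via the Woodbury identity, obtaining $\Cov_{n+1}^{-1}=(\alpha^2\Cov_n+\Sigma_{\omega})^{-1}+G^T\Sigma_{\nu}^{-1}G$, equivalently $\Cov_{n+1}=\Psi(\Cov_n)$ with $\Psi(C)=\bigl((\alpha^2 C+\Sigma_{\omega})^{-1}+M\bigr)^{-1}$ and $M:=G^T\Sigma_{\nu}^{-1}G\succeq 0$. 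The steady-state equation \eqref{eq:cov_steady} is then exactly the fixed-point relation $\Cov_\infty=\Psi(\Cov_\infty)$, so the first two assertions reduce to showing that $\Psi$ has a unique positive-definite fixed point to which the orbit converges geometrically.

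The crux of the argument, and the step I expect to be the main obstacle, is this covariance (discrete Riccati) analysis. I would work in the Thompson metric on the cone of positive-definite matrices, under which congruence, inversion and positive scaling are isometries, while $X\mapsto X+P$ for fixed $P\succ 0$ is a strict contraction on sets bounded above. Decomposing $\Psi$ as $C\mapsto\alpha^2 C\mapsto\alpha^2 C+\Sigma_\omega\mapsto(\cdot)^{-1}\mapsto(\cdot)^{-1}+M\mapsto(\cdot)^{-1}$, the only genuine contraction is the step $+\Sigma_\omega$, and $\Sigma_\omega\succ 0$ by hypothesis. To turn this into a uniform contraction factor I need an a priori bound confining the orbit to a compact forward-invariant set $\{\underline B\preceq X\preceq\bar B\}$ with $\underline B,\bar B\succ 0$; the lower bound $\Psi(C)\succeq(\Sigma_\omega^{-1}+M)^{-1}\succ 0$ is automatic, and the upper bound is precisely where the dichotomy in the hypotheses enters. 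If $\alpha<1$, then $\Psi(C)\preceq\alpha^2 C+\Sigma_\omega$ yields a geometric bound; if $\alpha=1$, the assumption $\mathrm{Range}(G^T)=\R^{N_\theta}$ forces $M\succ 0$ and gives the uniform bound $\Psi(C)\preceq M^{-1}$. In either case the Banach fixed-point theorem in the complete Thompson metric delivers a unique $\Cov_\infty\succ 0$ together with exponential convergence $\Cov_n\to\Cov_\infty$. Monotonicity of $\Psi$ in the semidefinite order is an available alternative route to existence, but the metric contraction furnishes uniqueness and the rate simultaneously.

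With the covariances converging exponentially, I would next treat the mean. Using the push-through identity $\pCov_{n+1}G^T(G\pCov_{n+1}G^T+\Sigma_\nu)^{-1}=\Cov_{n+1}G^T\Sigma_\nu^{-1}$, the mean update \eqref{eq:Lin_KF_analysis} becomes an affine recursion $\mean_{n+1}=T_{n+1}\mean_n+b_{n+1}$ whose propagator is $T_{n+1}=\alpha\Cov_{n+1}\pCov_{n+1}^{-1}$, and whose coefficients converge exponentially to their limiting values as $\Cov_n\to\Cov_\infty$. The key point is that the limiting propagator $T_\infty=\alpha\Cov_\infty\pCov_\infty^{-1}$ has spectral radius strictly below one: since $\Cov_\infty^{-1}=\pCov_\infty^{-1}+M$, the symmetric matrix $\pCov_\infty^{-1/2}\Cov_\infty\pCov_\infty^{-1/2}\preceq I$ has eigenvalues in $(0,1]$, so $\rho(T_\infty)<1$ follows from $\alpha<1$, or from $M\succ 0$ when $\alpha=1$. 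A standard argument for asymptotically autonomous linear recursions with a contractive limiting propagator then gives exponential convergence $\mean_n\to\mean_\infty$ to the unique fixed point.

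Finally I would identify $\mean_\infty$ as the minimizer of $\Phi_R$. Writing the mean fixed-point relation as $(1-\alpha)(\mean_\infty-r_0)=\Cov_\infty G^T\Sigma_\nu^{-1}(y-G\pmean_\infty)$ with $\pmean_\infty=\alpha\mean_\infty+(1-\alpha)r_0$, multiplying through by $\Cov_\infty^{-1}=\pCov_\infty^{-1}+M$, and expanding $y-G\pmean_\infty=(y-G\mean_\infty)+(1-\alpha)G(\mean_\infty-r_0)$, the term $(1-\alpha)M(\mean_\infty-r_0)$ cancels and leaves $G^T\Sigma_\nu^{-1}(y-G\mean_\infty)=(1-\alpha)\pCov_\infty^{-1}(\mean_\infty-r_0)$, which is exactly $\nabla\Phi_R(\mean_\infty)=0$ for the functional in \eqref{eq:KI299}. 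Strict convexity of $\Phi_R$, whose Hessian $G^T\Sigma_\nu^{-1}G+(1-\alpha)\pCov_\infty^{-1}$ is positive definite by the same dichotomy ($\alpha<1$, or $M\succ 0$ when $\alpha=1$), then certifies that this critical point is the unique global minimizer. This last step is pure linear algebra; the genuine difficulty is concentrated in the Riccati/covariance contraction of the second paragraph.
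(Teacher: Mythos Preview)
Your plan is correct and complete; the overall architecture---prove covariance convergence first via a contraction argument, then treat the mean as an asymptotically autonomous affine recursion with contractive limiting propagator, then identify the fixed point by linear algebra---matches the paper exactly. The propagator you write as $T_{n+1}=\alpha\Cov_{n+1}\pCov_{n+1}^{-1}$ is the same matrix the paper writes as $\alpha(\I-\Cov_{n+1}G^T\Sigma_\nu^{-1}G)$, since $\Cov_{n+1}\pCov_{n+1}^{-1}=\Cov_{n+1}(\Cov_{n+1}^{-1}-M)=\I-\Cov_{n+1}M$; your spectral-radius bound via the similar symmetric matrix $\pCov_\infty^{-1/2}\Cov_\infty\pCov_\infty^{-1/2}\preceq\I$ and the paper's bound via $\I-B^{1/2}\Cov_{n+1}B^{1/2}$ are two sides of the same coin.

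Where you genuinely diverge is in the covariance contraction itself. The paper works with the \emph{precision} sequence, conjugates by $\Sigma_\omega^{1/2}$, and bounds the Fr\'echet derivative of the resulting map $f(X;\alpha)=\Sigma_\omega^{1/2}M\Sigma_\omega^{1/2}+(\alpha^2 X^{-1}+\I)^{-1}$ in the ordinary operator norm, obtaining $\|Df(X;\alpha)\|\le\alpha^2/(1-\epsilon_0)^2<1$ on an absorbing set when $\alpha<1$, and $\|Df(X;1)\|\le(1+\epsilon_1)^{-2}<1$ directly when $\alpha=1$ with $M\succ 0$. You instead work with the \emph{covariance} map $\Psi$ and invoke the Thompson (part) metric on the positive-definite cone, decomposing $\Psi$ into isometries (scaling, inversion) and the strictly contractive translation $X\mapsto X+\Sigma_\omega$. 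The paper explicitly acknowledges this alternative in a footnote citing Bougerol's Riemannian-metric analysis of Kalman filters. Your route is arguably cleaner conceptually---each elementary step is either an isometry or a contraction, and no derivative computation is needed---whereas the paper's route is more self-contained, requiring only standard operator-norm estimates and no background on cone metrics. Both require the same a~priori confinement to a compact invariant set, and you correctly locate where the dichotomy $\alpha<1$ versus $\alpha=1,\ M\succ 0$ enters in each case.
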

\begin{proof}
The proof is in \ref{sec:appendix}.
\end{proof}

\begin{remark}
When $\alpha \in (0,1)$, the exponential convergence rates of the mean and covariance are independent of the condition number of $G^T\Sigma_{\nu}^{-1}G$. 
Furthermore, $\pCov_{\infty}$ is bounded above and below: 
$$\displaystyle \Sigma_{\omega} \preceq  \pCov_{\infty}  \preceq \frac{\Sigma_{\omega}}{1-\alpha^2},$$ since $\displaystyle 0 \preceq \Cov_{\infty} \preceq \alpha^2\Cov_{\infty} + \Sigma_{\omega}$. 
\end{remark}

\begin{remark}
\label{rem:1}
Despite
the clear parallels between \cref{eq:KI299} and Tikhonov regularization \cite{engl1996regularization}, there is an important
difference: the matrix $\pCov_{\infty}$ defining the implied
prior covariance in the regularization term
depends on the forward model. This may be seen by noting that
it is defined by \eqref{eq:add} in terms of the
steady state covariance $\Cov_{\infty}$ satisfying \eqref{eq:cov_steady}.
To get some insight into the implications of this, we consider the over-determined linear system in which $G^T\Sigma_{\eta}^{-1}G$ is invertible
and we may define
\begin{equation}
    \label{eq:cstar}
{C_{*}} = (G^T\Sigma_{\eta}^{-1}G)^{-1}.
\end{equation}
If we choose the artificial evolution and observation error covariances
\begin{subequations}
    \label{eq:cstar2}
    \begin{align}
    \Sigma_\nu &= 2 \Sigma_{\eta},\\
    \Sigma_{\omega} &= \bigl(2 - \alpha^{2}\bigl) \Cov_{*},
    \end{align}
    \end{subequations}
then straightforward calculation with  
\eqref{eq:cov_steady}, \eqref{eq:add} shows that
$$C_{\infty}=C_{*}, \quad \pCov_{\infty}=2\Cov_{*}.$$
From \eqref{eq:KI299} it follows that
\begin{equation}
\label{eq:cstar33}
    \Phi_R(\theta)= 
     \frac{1}{4} \left\lVert \Sigma_{\eta}^{-\frac{1}{2}}(y-G\theta) \right\rVert^2 +
    \frac{(1-\alpha)}{4} \left\lVert \Sigma_{\eta}^{-\frac{1}{2}}(Gr_0 - G \theta) \right\rVert^2.
    \end{equation}
This calculation clearly demonstrates the dependence of the second (regularization) term on the forward model and that choosing
$\alpha \in (0,1]$ allows different weights on the regularization term.
In contrast to Tikhonov regularization, the regularization term~(\ref{eq:cstar33}) scales similarly with respect to $G$
as does the data misfit, providing a regularization between 
the prior mean $r_0$ and an overfitted parameter  $\theta^* : y = G\theta^{*}$. Therefore, despite the differences from
standard Tikhonov regularization, the implied regularization
resulting from the proposed stochastic dynamical system
is both interpretable and controllable; in particular, the
single parameter $\alpha$ measures the balance between prior
and the overfitted solution.
\end{remark}

\begin{remark}
Theorem \ref{th:lin_converge} holds for any Kalman inversions that fulfill \cref{eq:KF_pred_mean,eq:KF_joint2_linear} exactly, which include ExKI, UKI, and these square root Kalman inversions~\cite{tippett2003ensemble,huang2022efficient}, but not the EKI.
\end{remark}

We contrast Theorem~\ref{th:lin_converge} with the behaviour of the filtering distribution
for the stochastic dynamical system used in the derivation of
the standard form of the EKI \cite{iglesias2013ensemble}, which
corresponds to the choices $\alpha=1$, $\Sigma_{\omega}=0$, and $\Sigma_{\nu}=\Sigma_{\eta}$.
To study this case we will assume that $C_0 \succ 0$ and define
\begin{subequations}
\label{eq:newvar}
\begin{align}
&C_n'=C_0^{-\frac12}C_nC_0^{-\frac12},\quad m_n'=C_0^{-\frac12}m_n,\quad G'=GC_0^{\frac12}, \quad S=(G')^T\Sigma_{\nu}^{-1}G'.
\end{align}
\end{subequations}
We note that the nullspace of $S$ is equal to the nullspace of $G'$,
and that the nullspace of $G'$ is found from the nullspace of $G$
by application of $C_0^{-\frac12}.$
Let $Q$ denote orthogonal projection onto the nullspace of $S$, 
and $P$ the orthogonal complement of $Q$. We then have the following
characterization of the filtering distribution for the stochastic
dynamical system underlying the form of the EKI introduced
in~\cite{iglesias2013ensemble}.

\begin{theorem}
\label{th:lin_converge2}
Assume that $\alpha=1, \Sigma_{\omega}=0$ and 
consider the iteration
\eqref{eq:KF_pred_mean2}, \eqref{eq:Lin_KF_analysis} mapping
$(m_n,C_n)$ into $(m_{n+1},C_{n+1})$. Assume further 
that $\Sigma_{\nu}\succ 0$ and that $C_0 \succ 0$. Then $C_n \succ 0$ for all $n \in \bbN$
and 
\begin{subequations}
\label{eq:newvar2}
\begin{align}
    (C_n')^{-1}&=I+nS,\\
    (I+nS)m_n'&=m_0'+n(G')^T\Sigma_{\nu}^{-1}y.
\end{align}
\end{subequations}
Thus, as $n \to \infty$, with $S^+$ denoting the Moore-Penrose pseudo-inverse of $S$,
\begin{subequations}
\begin{align}
 &n^{-1}P(C_n')^{-1} = S+\mathcal{O}(n^{-1}),   &Q(C_n')^{-1}=Q,\\
 &Pm_n'=S^+(G')^T\Sigma_{\nu}y+\mathcal{O}(n^{-1}),  &Qm_n'=Qm_0'.
 \end{align}
\end{subequations}

\end{theorem}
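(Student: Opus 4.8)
The plan is to pass to the information (inverse-covariance) form of the Kalman filter, in which the $\alpha=1$, $\Sigma_\omega=0$ iteration collapses to a pair of additive linear recursions that can be solved exactly and then analyzed by projecting onto the range and nullspace of $S$. First I would note that with $\alpha=1$ and $\Sigma_\omega=0$ the prediction step \eqref{eq:KF_pred_mean2} is trivial, $\pmean_{n+1}=m_n$ and $\pCov_{n+1}=C_n$. Applying the Sherman--Morrison--Woodbury identity to the Riccati covariance update \eqref{eq:Lin_KF_analysis_cov} converts it into the additive form
\[
C_{n+1}^{-1}=C_n^{-1}+G^T\Sigma_{\nu}^{-1}G.
\]
Since $C_0\succ0$ and $G^T\Sigma_{\nu}^{-1}G\succeq0$, induction yields $C_n^{-1}=C_0^{-1}+nG^T\Sigma_{\nu}^{-1}G\succ0$, which establishes $C_n\succ0$ for all $n$; conjugating by $C_0^{\frac12}$ and recognizing $S=C_0^{\frac12}G^T\Sigma_{\nu}^{-1}GC_0^{\frac12}$ gives the first identity $(C_n')^{-1}=I+nS$ of \eqref{eq:newvar2}.

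Next I would treat the mean. The key algebraic fact is the standard rewriting of the Kalman gain, $C_nG^T(GC_nG^T+\Sigma_{\nu})^{-1}=C_{n+1}G^T\Sigma_{\nu}^{-1}$, verified by factoring out the gain and using $(GC_nG^T+\Sigma_{\nu})\Sigma_{\nu}^{-1}-GC_nG^T\Sigma_{\nu}^{-1}=I$. Substituting this into the mean update \eqref{eq:Lin_KF_analysis} and multiplying through by $C_{n+1}^{-1}$ produces the information-form recursion $C_{n+1}^{-1}m_{n+1}=C_n^{-1}m_n+G^T\Sigma_{\nu}^{-1}y$, so by induction $C_n^{-1}m_n=C_0^{-1}m_0+nG^T\Sigma_{\nu}^{-1}y$. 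Conjugating by $C_0^{\frac12}$ and using the definitions of $m_n'$, $G'$, and $S$ gives the second identity $(I+nS)m_n'=m_0'+n(G')^T\Sigma_{\nu}^{-1}y$.

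Finally I would extract the asymptotics by projecting with $P$ and $Q$. As $S$ is symmetric, its range and nullspace are orthogonal complements, so $SP=PS=S$ and $SQ=QS=0$; moreover, because $\Sigma_{\nu}^{-1}\succ0$ one has $N(S)=N(G')$, hence $R(S)=R\bigl((G')^T\bigr)$, so $(G')^T\Sigma_{\nu}^{-1}y\in R(S)$ and $Q$ annihilates the data term. Applying $Q$ and $P$ to $(C_n')^{-1}=I+nS$ gives $Q(C_n')^{-1}=Q$ and $P(C_n')^{-1}=P+nS$, whence $n^{-1}P(C_n')^{-1}=S+\mathcal{O}(n^{-1})$. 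For the mean I would invert $I+nS$ blockwise on $R(S)\oplus N(S)$: it is the identity on $N(S)$, giving $Qm_n'=Qm_0'$, while on $R(S)$ the restriction of $n(I+nS)^{-1}$ converges to $S^+$, so the data term dominates and $Pm_n'=S^+(G')^T\Sigma_{\nu}^{-1}y+\mathcal{O}(n^{-1})$, the $m_0'$ contribution decaying at rate $\mathcal{O}(n^{-1})$.

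The main obstacle is not any single sharp estimate but keeping the linear algebra clean: correctly applying the Woodbury identity, verifying the gain identity $C_nG^T(GC_nG^T+\Sigma_{\nu})^{-1}=C_{n+1}G^T\Sigma_{\nu}^{-1}$, and---most delicately---tracking how $P$ and $Q$ interact with $S$, $S^+$, and the data term so that the degenerate directions, on which $I+nS$ does not decay, are cleanly separated from the observed directions, on which the limit $n(I+nS)^{-1}\to S^+$ governs the $\mathcal{O}(n^{-1})$ rate.
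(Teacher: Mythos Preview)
Your proposal is correct and follows essentially the same route as the paper: pass to the information (precision) form of the Kalman recursion, telescope to obtain the closed-form expressions $(C_n')^{-1}=I+nS$ and $(I+nS)m_n'=m_0'+n(G')^T\Sigma_{\nu}^{-1}y$, then project with $P$ and $Q$ to read off the asymptotics. The only minor difference is that the paper simply invokes the already-derived information form \eqref{eq:Sigma_update} (citing \cite{law2015data}), whereas you supply the Woodbury and Kalman-gain identities explicitly; your derivation is self-contained but not a different argument.
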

\begin{proof}
The proof is in \ref{sec:appendix}.
\end{proof}

\begin{remark}
\label{rem:2}
Consider Theorem \ref{th:lin_converge2}, in which
$\alpha=1$ 
and $\Sigma_{\omega}=0$, and note that $S \succ 0$ in $P\bbR^{N_{\theta}}$.
The theorem shows that the covariance of the filtering
distribution of the stochastic dynamical system underlying
the original implementation of EKI 
exhibits collapse to zero at algebraic rate
in the observed subspace $P\bbR^{N_{\theta}}$, and is unchanged in 
the unobserved subspace $Q\bbR^{N_{\theta}}$.
The mean converges algebraically
slowly at rate $\mathcal{O}(n^{-1})$ in $P\bbR^{N_{\theta}}$ 
and is unchanged in $Q\bbR^{N_{\theta}}$.
\end{remark}

\begin{remark} Theorem \ref{th:lin_converge}-\ref{th:lin_converge2} suggests
the importance of choosing $\alpha \in (0,1)$ 
and $\Sigma_{\omega} \succ 0$ in the stochastic
dynamical systems that we propose here, as this ensures exponential convergence of the filtering distribution to a regularized least squares problem. However, if the forward operator has empty null-space, the situation arising when the inversion problem is well-determined or over-determined,
then $\alpha=1$ may be chosen but it is again important to ensure
$\Sigma_{\omega} \succ 0$ to avoid the algebraic convergence
exhibited in Theorem \ref{th:lin_converge2}.
In the case $\alpha \in (0,1)$ Theorem \ref{th:lin_converge} demonstrates the regularization which underlies the
proposed iterative method.
In the case $\alpha =1$, the regularization term vanishes.
\end{remark}

\begin{remark}
The behaviour of the finite particle size EKI, in the case $\alpha=1$, $\Sigma_\omega=0$,
is fully analyzed in \cite{schillings2017analysis}. Theorem  \ref{th:lin_converge2} is a mean-field counterpart of that theory.
\end{remark}

The following proposition is relevant to understanding some
of the numerical experiments  presented later in the
paper and, taken together with Theorems \ref{th:lin_converge} and
\ref{th:lin_converge2}, it also completes our analysis
of the filtering distribution for the novel stochastic
dynamical system introduced in this paper.

\begin{proposition}
\label{th:lin_converge3}
Assume that $\alpha=1$ and
$\Sigma_{\omega} \succ 0$ and consider the setting where the forward operator $G$ has non-trivial null space 
(thus violating the assumption 
$\text{Range}(G^T)=\R^{N_{\theta}}$ in
Theorem \ref{th:lin_converge}).  Assume further 
that $\Sigma_{\nu}\succ 0$ and that $C_0 \succ 0$. Then
$\Cov_n \succ 0$ for all $n\in\bbN$ and
$\mean_n$ converges to a minimizer of 
$\frac{1}{2}\lVert \Sigma_{\nu}^{-\frac{1}{2}} (y - G \theta)\rVert^2$ exponentially fast.  However $\Cov_n^{-1}$ converges to a singular matrix and hence
$\|\Cov_n\|$ diverges to $+\infty$; the rate of divergence
is bounded by
\begin{equation}
    \label{eq:ntb}
\Cov_n \preceq \Cov_0 + n \Sigma_{\omega}.
\end{equation}
\end{proposition}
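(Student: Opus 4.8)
The plan is to recast the update in information (inverse-covariance) form, where the $\alpha=1$ structure becomes transparent. Applying the Sherman--Morrison--Woodbury identity to \eqref{eq:Lin_KF_analysis_cov} with $\pCov_{n+1}=\Cov_n+\Sigma_{\omega}$ turns the recursion into $\Cov_{n+1}^{-1}=(\Cov_n+\Sigma_{\omega})^{-1}+S$ together with $\Cov_{n+1}^{-1}\mean_{n+1}=(\Cov_n+\Sigma_{\omega})^{-1}\mean_n+b$, where $S:=G^T\Sigma_{\nu}^{-1}G\succeq 0$ and $b:=G^T\Sigma_{\nu}^{-1}y$; note that $S$ and $G$ share the nontrivial null space $N:=\mathrm{Null}(G)$, that $R:=\mathrm{Range}(G^T)=\mathrm{Range}(S)=N^\perp$, and that $b\in R$. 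Write $P,Q$ for the orthogonal projections onto $R,N$. The positive-definiteness claim is then immediate by induction: if $\Cov_n\succ0$ then $(\Cov_n+\Sigma_{\omega})^{-1}\succ0$, and adding $S\succeq0$ keeps $\Cov_{n+1}^{-1}\succ0$. The bound \eqref{eq:ntb} is equally direct, since the analysis step subtracts a positive-semidefinite matrix, so $\Cov_{n+1}\preceq\pCov_{n+1}=\Cov_n+\Sigma_{\omega}$, and induction gives $\Cov_n\preceq\Cov_0+n\Sigma_{\omega}$. Two uniform bounds I will reuse follow from the same recursion: $S\preceq\Cov_{n+1}^{-1}$ (strictly, as $(\Cov_n+\Sigma_{\omega})^{-1}\succ0$) and $\Cov_{n+1}^{-1}\preceq\Sigma_{\omega}^{-1}+S$, hence $\Cov_n\succeq\Cov_{\min}:=(\Sigma_{\omega}^{-1}+S)^{-1}\succ0$ for $n\ge1$.

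For the mean I would exploit that the update is a preconditioned gradient step. Writing $\Phi(\theta):=\tfrac12\|\Sigma_{\nu}^{-1/2}(y-G\theta)\|^2$, its gradient is $\nabla\Phi(\theta)=S\theta-b$, and the information-form mean update rearranges to $\mean_{n+1}=\mean_n-\Cov_{n+1}\,g_n$ with $g_n:=S\mean_n-b=\nabla\Phi(\mean_n)\in R$. Expanding the quadratic $\Phi$ exactly yields $\Phi(\mean_{n+1})-\Phi(\mean_n)=-g_n^T(\Cov_{n+1}-\tfrac12\Cov_{n+1}S\Cov_{n+1})g_n$; since $S\preceq\Cov_{n+1}^{-1}$ we have $\Cov_{n+1}S\Cov_{n+1}\preceq\Cov_{n+1}$, so the bracket is $\succeq\tfrac12\Cov_{n+1}\succeq\tfrac12\Cov_{\min}$, i.e. the step never overshoots. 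Using $\Phi(\mean_n)-\Phi_{\min}=\tfrac12 g_n^TS^+g_n$ (valid because $g_n\in R$) and comparing the two positive-definite forms $g\mapsto g^T\Cov_{\min}g$ and $g\mapsto g^TS^+g$ on the finite-dimensional space $R$, there is $\rho\in(0,1]$ with $\Phi(\mean_{n+1})-\Phi_{\min}\le(1-\rho)\bigl(\Phi(\mean_n)-\Phi_{\min}\bigr)$. Thus $\|g_n\|\to0$ at an exponential rate; combined with the at-most-linear growth $\|\Cov_{n+1}\|=\bigO(n)$ from \eqref{eq:ntb}, the increments satisfy $\|\mean_{n+1}-\mean_n\|\le\|\Cov_{n+1}\|\,\|g_n\|=\bigO\!\bigl(n(1-\rho)^{n/2}\bigr)$, which is summable. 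Hence $\mean_n$ is Cauchy and converges, exponentially fast, to some $\mean_\infty$ with $S\mean_\infty=b$, i.e. a minimizer of $\Phi$. Crucially, this argument uses only the bounds above, not convergence of $\Cov_n$.

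For the covariance I would show that $\Cov_n^{-1}$ converges and that its limit is singular. For convergence, the map $T(D):=(D^{-1}+\Sigma_{\omega})^{-1}+S$ is monotone in the Loewner order, and since $T(D)\preceq\Sigma_{\omega}^{-1}+S$ for every $D\succ0$, iterating from $\Sigma_{\omega}^{-1}+S$ produces a decreasing sequence bounded below by $S$, hence convergent; monotonicity then bounds $\Cov_n^{-1}$ from above by this sequence. The matching behaviour splits along $\R^{N_{\theta}}=R\oplus N$: on the observable subspace $R$ the reduced recursion is contractive, whereas on $N$ the injected noise $\Sigma_{\omega}\succ0$ is never removed by the analysis step, forcing $Q\Cov_nQ\to\infty$ and hence $\Cov_n^{-1}\to0$ on $N$; together these give $\Cov_n^{-1}\to D_\infty$. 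Singularity of $D_\infty$ then follows cleanly by contradiction: were $D_\infty\succ0$, then $\Cov_n\to C_\infty:=D_\infty^{-1}\succ0$ would be a finite limit satisfying $C_\infty^{-1}=(C_\infty+\Sigma_{\omega})^{-1}+S$, whence $S=C_\infty^{-1}-(C_\infty+\Sigma_{\omega})^{-1}\succ0$ (strict, since $\Sigma_{\omega}\succ0$ gives $C_\infty\prec C_\infty+\Sigma_{\omega}$), contradicting the singularity of $S$. Thus $D_\infty$ is singular, and since its smallest eigenvalue is the limit of $\lambda_{\min}(\Cov_n^{-1})=\|\Cov_n\|^{-1}$, we obtain $\|\Cov_n\|\to\infty$.

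The main obstacle is precisely the convergence of $\Cov_n^{-1}$ in the third step, and it is hard exactly because $\alpha=1$. The decreasing upper sequence and the contradiction argument are routine, but upgrading boundedness to genuine convergence is delicate: with $\alpha=1$ the latent dynamics has transition matrix $I$, whose restriction to the unobservable subspace $N$ is only marginally stable, so the filter is not detectable and the usual contraction-based Riccati convergence theory fails on $N$. I would therefore prove convergence of the observable block $P\Cov_n^{-1}P$ through the detectable reduced recursion, and separately establish $Q\Cov_nQ\to\infty$ with controlled $R$--$N$ coupling in the Kalman gain (so that $Q\Cov_n^{-1}Q\to0$ and $P\Cov_n^{-1}Q\to0$); alternatively one may appeal to the classical steady-state Kalman/Riccati results in the non-detectable setting (cf. \cite{lancaster1995algebraic,bougerol1993kalman}). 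Once convergence of $\Cov_n^{-1}$ is secured, the remaining conclusions follow from the self-contained arguments above.
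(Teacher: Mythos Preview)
Your proposal is correct and, for the mean and for the singularity of the limiting precision, takes a genuinely different route from the paper. The paper first rescales by $\Sigma_{\omega}^{1/2}$, then decomposes $\R^{N_\theta}$ as $\mathrm{Range}((G')^T)\oplus\mathrm{Ker}(G')$ and treats the covariance recursion block by block; only afterwards does it address the mean, again through this decomposition, bounding the spectral radius of $P-P\Cov'_{n+1}SP$ on the observable block and then showing that $Q\mean'_n$ settles because its forcing term decays exponentially while $\|\Cov'_{n+1}\|$ grows only linearly. Your mean argument sidesteps the decomposition entirely: reading $\mean_{n+1}=\mean_n-\Cov_{n+1}\nabla\Phi(\mean_n)$ as a preconditioned gradient step and using $S\preceq\Cov_{n+1}^{-1}$ to rule out overshoot yields a one-line linear contraction in $\Phi-\Phi_{\min}$, after which Cauchy-ness of $\{\mean_n\}$ follows from the same linear-growth bound on $\|\Cov_n\|$. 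This decouples the mean analysis from convergence of $\Cov_n$ and is more transparent. Your singularity argument (a hypothetical positive-definite fixed point would force $S\succ 0$, contradicting $\mathrm{Null}(G)\neq\{0\}$) is likewise cleaner than the paper's, which argues directly that $v^T(\Cov'_n)^{-1}v\searrow 0$ for $v\in\mathrm{Ker}(G')$. On the one genuinely delicate point---convergence of $\Cov_n^{-1}$---your outline and the paper's argument both rely on the same $R\oplus N$ splitting and are comparably informal about why the cross-block coupling does not obstruct convergence; you are right to flag this as the crux.
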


\begin{proof}
The proof is in \ref{sec:appendix}.
\end{proof}

\subsection{ExKI: Levenberg–Marquardt Connection}
\label{ssec:LMA}
In the nonlinear setting, our numerical results will demonstrate
the implicit regularization and linear (sometimes superlinear) 
convergence of ExKI and UKI. This desirable feature can
be understood by the analogy with the Levenberg–Marquardt Algorithm~(LMA). 
We focus this discussion on the particular case $\alpha=1$ as we find
that, for over-determined problems, this choice often produces the best results.

Consider the non-regularized nonlinear least-squares objective function
$\Phi$, defined in (\ref{eq:KI2}b). 
The key step in the Levenberg–Marquardt Algorithm~(LMA) is to solve the minimization problem 
for~(\ref{eq:KI2}b) by a preconditioned gradient descent procedure which
maps $\theta_n$ to $\theta_n+\delta \theta_n$ and where $\delta \theta_n$ solves
 \begin{equation}
    \begin{split}
    \label{eq:LMA-a1}
        (d\mathcal{G}(\theta_n)^T\Sigma_{\nu}^{-1}d\mathcal{G}(\theta_n)  + \lambda_n \I)\delta \theta_n = d\mathcal{G}(\theta_n)^T\Sigma_{\nu}^{-1}(y - \mathcal{G}(\theta_n)).
\end{split}
\end{equation}
Here $\I$ is the identity matrix on $\R^{N_\theta}$ and $\lambda_n$ is the (non-negative) damping factor, often chosen adaptively.  Because
of the damping matrix $\lambda_n \I$, the LMA is 
found to be more robust than the Gauss–Newton Algorithm and exhibits
linear (or even superlinear) convergence in practice. 
The use of LMA for inverse problems is discussed
in \cite{hanke1997regularizing}.

The ExKI procedure solves the optimization problem for (\ref{eq:KI2}b)
by a different preconditioned gradient descent procedure, defined by the
update
 \begin{equation}
    \begin{split}
    \label{eq:ExKI-theorem-a1}
        \Big(d\mathcal{G}(\theta_n)^T\Sigma_{\nu}^{-1}d\mathcal{G}(\theta_n) + \left(\Cov_{n}  + \Sigma_{\omega}\right)^{-1}\Big)\delta \theta_n &= d\G(\theta_n)^T \Sigma_{\nu}^{-1}\left(y -  \G(\theta_n)\right).
\end{split}
\end{equation}
This may be viewed as a generalization of the LMA in which the 
adaptive damping term is now a matrix $\Cov_{n}  + \Sigma_{\omega}$
and the adaptation is automated through the covariance updates;
furthermore this matrix is lower bounded (in the sense of
quadratic forms) by $\Sigma_{\omega}$, regardless of the adaptation
through the covariance, ensuring some damping of the Gauss-Newton
approximate Hessian. We may expect that the UKI and EKI, which
approximate the linearization $d\G$ in the ExKI, to benefit from this
generalized LMA. Connections between the LMA and EKI were first 
systematically explored in \cite{iglesias2016regularizing} and more
recently in \cite{chada2020iterative}.

\subsection{UKI: Unscented Approximation and Averaging}
\label{ssec:UA}

Here we explain that the unscented transform may be viewed as
smoothing the energy landscape of UKI, in comparison with ExKI;
this helps to explain the improved behaviour of UKI over ExKI
on rough landscapes, such as those
we will show in section \ref{sec:app} when performing parameter estimation for
chaotic differential equations.
To understand this smoothing effect we first introduce a useful averaging property~\cite[Theorem~1]{csato2002sparse}.
\footnote{In
what follows, the suffix $_{\succeq 0}$ denotes positive semi-definite matrix
and $\frac{\partial}{\partial m}$ denotes gradient with respect to $m.$}
\begin{lemma}
\label{th:filter}
Let $\theta$ denote Gaussian random vector $\displaystyle \theta \sim \N(\mean, \Cov) \in \R^{N_{\theta}}$. For any nonlinear function $\G:\R^{N_\theta} \to \R^{N_y}$, we define the associated averaged function $\F\G:\R^{N_\theta} \times \R^{N_\theta \times N_\theta}_{\succeq 0}
\to \R^{N_y}$ and averaged gradient function $\F d \G: 
\R^{N_\theta} \times \R^{N_\theta \times N_\theta}_{\succeq 0} \to 
\R^{N_y \times N_\theta}$ as follows:
\begin{equation}
\label{eq:aGAA}
    \F\G(\mean, \Cov) := \E[\G(\theta)]  \qquad 
    \F d \G(\mean, \Cov) := \mathrm{Cov}[\G(\theta), \theta] \cdot \Cov^{-1}. 
\end{equation}
Then we have $\displaystyle \frac{\partial\F\G(\mean,\Cov)}{\partial\mean} = \F d \G(\mean,\Cov)$. 
\end{lemma}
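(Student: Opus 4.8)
The plan is to prove the identity $\frac{\partial}{\partial m}\F\G(m,C) = \F d\G(m,C)$ by differentiating the defining integral for $\F\G(m,C) = \E[\G(\theta)]$ under the Gaussian measure and exploiting a Gaussian integration-by-parts (Stein-type) identity. First I would write the averaged function explicitly as
\begin{equation}
\F\G(m,C) = \int_{\R^{N_\theta}} \G(\theta)\, p(\theta;m,C)\, d\theta,
\end{equation}
where $p(\theta;m,C) = (2\pi)^{-N_\theta/2}(\det C)^{-1/2}\exp\bigl(-\tfrac12(\theta-m)^T C^{-1}(\theta-m)\bigr)$ is the $\N(m,C)$ density. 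The key observation is that the mean $m$ enters the density only through the quadratic form in the exponent, so differentiating with respect to $m$ passes onto $p$ and produces a clean factor.

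The central step is the computation $\frac{\partial}{\partial m} p(\theta;m,C) = C^{-1}(\theta - m)\, p(\theta;m,C)$, which follows directly from differentiating the exponent. Substituting this into the differentiated integral, interchanging differentiation and integration (justified by assuming $\G$ grows at most polynomially, so the integrals converge and dominated convergence applies), I obtain
\begin{equation}
\frac{\partial}{\partial m}\F\G(m,C) = \int \G(\theta)\,(\theta - m)^T C^{-1}\, p(\theta;m,C)\, d\theta = \E\bigl[\G(\theta)(\theta-m)^T\bigr] C^{-1}.
\end{equation}
The remaining task is to recognize the prefactor as a covariance: since $\E[\theta - m] = 0$, we have $\E[\G(\theta)(\theta - m)^T] = \mathrm{Cov}[\G(\theta),\theta]$, giving exactly $\F d\G(m,C) = \mathrm{Cov}[\G(\theta),\theta]\cdot C^{-1}$ as defined in \eqref{eq:aGAA}. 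Care must be taken with transpose conventions so that the matrix shapes match: $\frac{\partial}{\partial m}\F\G$ is $N_y \times N_\theta$, which agrees with $\mathrm{Cov}[\G(\theta),\theta]$ being $N_y \times N_\theta$ times $C^{-1}$ being $N_\theta \times N_\theta$.

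I expect the main obstacle to be purely technical rather than conceptual: namely, rigorously justifying the interchange of the $m$-derivative with the integral over $\R^{N_\theta}$, which requires a mild growth/integrability assumption on $\G$ (polynomial growth suffices for the Gaussian weight to dominate). A secondary point of care is bookkeeping of the transpose and the placement of $C^{-1}$ so that the final expression matches the stated definition of $\F d\G$ exactly; this is where sign and orientation errors typically creep in, so I would fix conventions at the outset and verify dimensional consistency at the end. No deeper difficulty is anticipated, as the result is essentially Stein's lemma in matrix form.
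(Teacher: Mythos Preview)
Your proposal is correct and follows essentially the same approach as the paper's proof: write the expectation as an integral against the Gaussian density, differentiate the density with respect to $m$ to produce the factor $(\theta-m)^T C^{-1}$, and then identify the resulting expression as $\mathrm{Cov}[\G(\theta),\theta]\,C^{-1}$. The paper's version is terser---it omits the discussion of interchanging differentiation and integration and the dimension check---but the computation is line-for-line the same.
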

\begin{proof}
The proof is in \ref{sec:appendix}.
\end{proof}

Note that in the linear case 
$\F\G(\mean, \Cov)=\G(\mean)$ and $\F d \G(\mean, \Cov)=\G$; the averaged derivative is exact.
This averaging procedure is useful to understand the conceptual GAA precisely because \eqref{eq:aGAA}
may be used to express $\textrm{Cov}[\G(\theta), \theta]$, which appears in the conceptual GAA,
in terms of the averaged derivative $\F d \G(\mean, \Cov).$ In order to
use this idea in the context of the UKI it is useful to understand related averaging operations
when the modified unscented transform (Definition \ref{def:unscented_tranform})
is employed to approximate Gaussian expectations. To this end we define, using 
\eqref{eq:KF_pred_mean_u}--\eqref{eq:UKI-analysis},
\begin{equation}
\begin{split}
\label{eq:UKI_smooth_0}
    \Fu\G_{n} & := \py_{n},\\
    \Fu d \G_{n} & := {\pCov_{n}^{\theta y}}{}^T {\pCov_{n}}^{-1},
\end{split}
\end{equation}
noting that $\Fu\G_n$ and $\Fu d\G_n$ then correspond to approximation of \eqref{eq:aGAA} at step $n$ of the
algorithm, using the modified unscented transform from Definition \ref{def:unscented_tranform}.

\begin{proposition}
\label{th:uki}
The UKI algorithm \eqref{eq:KF_pred_mean_u}--\eqref{eq:UKI-analysis}
may be written in the following form:
\begin{itemize}
    \item Prediction step :
    \begin{equation}
\label{eq:KF_pred_mean_ex_smooth}
\begin{split}
    \pmean_{n+1} = & \alpha\mean_n + (1 - \alpha)r_0,\\
    \pCov_{n+1} = & \alpha^2 \Cov_{n} + \Sigma_{\omega}.
\end{split}
\end{equation}
    \item Analysis step :
    \begin{equation}
    \label{eq:UKI_smooth:approx}
    \begin{split}
    &\py_{n+1} = \Fu\G_{n+1},\\
    &\pCov^{\theta y}_{n+1} = \pCov_{n+1} \Fu d\G_{n+1}^{T},\\
    &\pCov^{yy}_{n+1} =
        \Fu d\G_{n+1}\pCov_{n+1} \Fu d\G_{n+1}^{T} + \Sigma_{\nu} +
        \widetilde{\Sigma}_{\nu,n+1},\\
    &\mean_{n+1} = \pmean_{n+1} + \pCov_{n+1}^{\theta y}(\pCov_{n+1}^{yy})^{-1}\bigl(y - \py_{n+1}\bigr),\\
    &\Cov_{n+1} = \pCov_{n+1} - \pCov^{\theta y}_{n+1}(\pCov^{yy}_{n+1})^{-1}{\pCov^{\theta y}_{n+1}}{}^{T}.\\
    \end{split}
    \end{equation}
    \end{itemize}
Here $\widetilde{\Sigma}_{\nu,n+1}\succeq 0.$ Furthermore,  $\|\widetilde{\Sigma}_{\nu,n+1}\| = \bigO(\|\pCov_{n+1}\|^2)$ and $\widetilde{\Sigma}_{\nu,n+1} = 0$ when $\G$ is linear. 
\end{proposition}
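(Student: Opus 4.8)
The plan is to read off the prediction step, the mean update, and the covariance update in \eqref{eq:UKI_smooth:approx} as identical to those in \eqref{eq:KF_pred_mean_u}--\eqref{eq:UKI-analysis}, so that the entire proposition reduces to verifying the three analysis-step identities for $\py_{n+1}$, $\pCov^{\theta y}_{n+1}$, and $\pCov^{yy}_{n+1}$, together with the three asserted properties of $\widetilde{\Sigma}_{\nu,n+1}$. The identity $\py_{n+1}=\Fu\G_{n+1}$ is immediate from the definition \eqref{eq:UKI_smooth_0}. For the cross-covariance I would substitute $\Fu d\G_{n+1}=(\pCov^{\theta y}_{n+1})^{T}\pCov_{n+1}^{-1}$, transpose using the symmetry of $\pCov_{n+1}$ to obtain $\Fu d\G_{n+1}^{T}=\pCov_{n+1}^{-1}\pCov^{\theta y}_{n+1}$, and left-multiply by $\pCov_{n+1}$; this yields $\pCov_{n+1}\Fu d\G_{n+1}^{T}=\pCov^{\theta y}_{n+1}$ directly.

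The core is the $\pCov^{yy}_{n+1}$ identity. First I would introduce the sigma-point displacements $u_j:=\pp^j_{n+1}-\pmean_{n+1}$ and $v_j:=\ppy^j_{n+1}-\py_{n+1}$ and record the three weighted second-moment identities $\sum_j W_j^c u_j u_j^{T}=\pCov_{n+1}$ (a consequence of the weight choice $W_j^c=\tfrac{1}{2a^2N_\theta}$, $c_j=a\sqrt{N_\theta}$ and the Cholesky structure of the sigma points), $\sum_j W_j^c u_j v_j^{T}=\pCov^{\theta y}_{n+1}$, and $\sum_j W_j^c v_j v_j^{T}=\pCov^{yy}_{n+1}-\Sigma_{\nu}$. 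I would then define the residuals $r_j:=v_j-\Fu d\G_{n+1}u_j$ and set $\widetilde{\Sigma}_{\nu,n+1}:=\sum_j W_j^c r_j r_j^{T}$. Expanding this sum and substituting the three moment identities together with $\Fu d\G_{n+1}=(\pCov^{\theta y}_{n+1})^{T}\pCov_{n+1}^{-1}$ makes the two cross terms each equal $\Fu d\G_{n+1}\pCov_{n+1}\Fu d\G_{n+1}^{T}$, so they collapse against the final term and leave exactly $\widetilde{\Sigma}_{\nu,n+1}=(\pCov^{yy}_{n+1}-\Sigma_{\nu})-\Fu d\G_{n+1}\pCov_{n+1}\Fu d\G_{n+1}^{T}$, which rearranges into the claimed decomposition. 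Since every $W_j^c$ is strictly positive and each $r_j r_j^{T}$ is rank-one positive semi-definite, $\widetilde{\Sigma}_{\nu,n+1}\succeq 0$ follows at once.

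For the remaining two properties: when $\G$ is linear, $v_j=\G(\pp^j_{n+1})-\G(\pmean_{n+1})=d\G\,u_j$ exactly, so $\pCov^{\theta y}_{n+1}=\pCov_{n+1}d\G^{T}$ gives $\Fu d\G_{n+1}=d\G$ and hence $r_j=0$ for every $j$, whence $\widetilde{\Sigma}_{\nu,n+1}=0$ (this is just the exactness in \cref{lem:uki}). For the order estimate I would Taylor expand $v_j=d\G\,u_j+\tfrac12 d^2\G[u_j,u_j]+\bigO(\|u_j\|^3)$ about $\pmean_{n+1}$, with $\|u_j\|=\bigO(\|\pCov_{n+1}\|^{1/2})$; because the sigma points are symmetric ($u_{j+N_\theta}=-u_j$ with equal weights) all odd weighted moments vanish, giving $\pCov^{\theta y}_{n+1}=\pCov_{n+1}d\G^{T}+\bigO(\|\pCov_{n+1}\|^2)$ and $\pCov^{yy}_{n+1}-\Sigma_{\nu}=d\G\pCov_{n+1}d\G^{T}+\bigO(\|\pCov_{n+1}\|^2)$. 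Substituting the former into $\Fu d\G_{n+1}\pCov_{n+1}\Fu d\G_{n+1}^{T}$ shows its leading term is also $d\G\pCov_{n+1}d\G^{T}$, so the two leading terms cancel in the difference defining $\widetilde{\Sigma}_{\nu,n+1}$, leaving $\bigO(\|\pCov_{n+1}\|^2)$.

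The main obstacle I anticipate is the bookkeeping in this last estimate: since $\Fu d\G_{n+1}=(\pCov^{\theta y}_{n+1})^{T}\pCov_{n+1}^{-1}$ contains the inverse $\pCov_{n+1}^{-1}$, one must check that the $\bigO(\|\pCov_{n+1}\|^2)$ error $E$ in $\pCov^{\theta y}_{n+1}$ is not amplified into a lower-order contribution. Writing $\pCov_{n+1}=\epsilon^2\Sigma$ with $\Sigma\succ 0$ fixed makes this transparent: the dangerous term $E^{T}\pCov_{n+1}^{-1}E$ is $\bigO(\epsilon^4)\cdot\bigO(\epsilon^{-2})\cdot\bigO(\epsilon^4)=\bigO(\epsilon^6)=\bigO(\|\pCov_{n+1}\|^3)$, while the cross terms $d\G\,E$ and $E^{T}d\G^{T}$ are $\bigO(\epsilon^4)=\bigO(\|\pCov_{n+1}\|^2)$, so the estimate survives the cancellation. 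Establishing this controlled-inverse bound carefully is the only genuinely delicate point; the rest is algebraic rearrangement.
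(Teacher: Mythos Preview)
Your argument is correct and reaches the same conclusion, but the route differs from the paper's in a way worth noting. You define the remainder via regression residuals $r_j=v_j-\Fu d\G_{n+1}u_j$ and set $\widetilde{\Sigma}_{\nu,n+1}=\sum_j W_j^c r_jr_j^{T}$; this is a clean least-squares projection argument that would work for any positive-weight quadrature rule, not just the symmetric sigma points. The paper instead packs the first and second halves of the sigma-point outputs into matrices $\mathcal{Y}_1,\mathcal{Y}_2$ and, using invertibility of the sigma-point displacement matrix $\Theta$ (from $\pCov_{n+1}\succeq\Sigma_\omega\succ 0$), obtains the closed form $\widetilde{\Sigma}_{\nu,n+1}=\tfrac{w}{2}(\mathcal{Y}_1+\mathcal{Y}_2)(\mathcal{Y}_1+\mathcal{Y}_2)^{T}$. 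In fact these coincide: one can check $r_i=r_{i+N_\theta}=\tfrac12(v_i+v_{i+N_\theta})$. The payoff of the paper's formulation is in the order estimate: each column of $\mathcal{Y}_1+\mathcal{Y}_2$ is a centered second difference $\G(\pmean+c_j[\sqrt{\pCov}]_j)+\G(\pmean-c_j[\sqrt{\pCov}]_j)-2\G(\pmean)$, which a single Taylor expansion shows is $\bigO(\|\pCov_{n+1}\|)$, giving $\|\widetilde{\Sigma}_{\nu,n+1}\|=\bigO(\|\pCov_{n+1}\|^2)$ immediately and with no inverse in sight. Your route works too, but the bookkeeping you flagged around $E^{T}\pCov_{n+1}^{-1}E$ is genuinely needed in your framing and entirely avoided in the paper's. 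So: your approach is more general, the paper's is sharper for this specific symmetric ensemble.
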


\begin{proof}
The proof is in \ref{sec:appendix}.
\end{proof}

\begin{remark}
\label{rem:UKI}
Comparison of the original UKI algorithm \eqref{eq:KF_pred_mean_u}--\eqref{eq:UKI-analysis} with its rewritten form
\eqref{eq:KF_pred_mean_ex_smooth}-\eqref{eq:UKI_smooth:approx}
demonstrates that, in the regime where the covariance is small, or
the forward model is linear, the UKI algorithm behaves like the ExKI
algorithm \eqref{eq:KF_pred_mean_ex}-\eqref{eq:ExKI-1.2} but with
the nonlinear function $\G$ and its associated gradient $d\G$ having been
averaged according to unscented approximations of the averaging operations
defined in Lemma \ref{th:filter}. From the preceding subsection, it follows that the UKI is also related to a modified LMA applied to an averaged objective function.
Note that, by using the unscented approximation of the averaging procedure defined in
Lemma \ref{th:filter}, we essentially remove the averaging of $\G$ and retain it only on $d\G.$
Averaging of the gradient $d\G$ alone will be demonstrated to have an important positive effect
on parameter estimation for  chaotic dynamical systems in Subsections \ref{sec:app:Lorenz63}, \ref{sec:app:Lorenz96}, and \ref{sec:app:GCM}.
\end{remark}

\subsection{Continuous Time Limit}
\label{ssec:cts}

To derive a continuous-time limit we set $\alpha=1-\alpha_0 h$, $\Sigma_\omega
\mapsto h\Sigma_\omega$, and $\Sigma_\nu \mapsto h^{-1}\Sigma_\nu.$  The
algorithm defined by \Cref{eq:KF_pred_mean,eq:KF_joint2,eq:KF_analysis} then
has the form of a first order accurate (in $h$) approximation of the dynamical system
\begin{subequations}
\label{eq:cts}
\begin{align}
\dot{m}&=-\alpha_0(m-r_0)+\Cov^{\theta y} \Sigma_{\nu}^{-1} \bigl(y - \E \G(\theta)\bigr), \label{eq:cts-a}\\
\dot{C}&=-2\alpha_0 C+\Sigma_{\omega}-\Cov^{\theta y} {\Sigma_{\nu}}^{-1}{\Cov^{\theta y}}^T, \label{eq:cts-b}
\end{align}
\end{subequations}
where $\theta \sim \N(m,C)$, expectation $\E$ is with respect to this distribution and
$$\Cov^{\theta y}=\E\Bigl(\bigl(\theta-m\bigr)\otimes\bigl(\G(\theta)-\E \G(\theta)\bigr)\Bigr).$$
This continuous-time dynamical system may be used as the basis for practical
algorithms by discretizing in time, for example, using forward Euler with an
adaptive time-step as in \cite{kovachki2019ensemble}, and applying the same ideas
used in the ExKF, UKI or EKI to approximate the expectations.

The steady state $m_{\infty}, C_{\infty}$ of the differential equations \eqref{eq:cts} 
are implicitly defined in a somewhat complicated fashion. However, any such steady state always has
non-singular covariance as we now state and prove.

\begin{lemma}
\label{lemma:UKI-Continuous}
For any  steady state $(m_{\infty}, C_{\infty})$  of \cref{eq:cts}, the steady covariance
$C_{\infty}$ is non-singular.
\end{lemma}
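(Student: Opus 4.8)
The plan is to reduce the statement to an algebraic identity for the steady covariance and then argue by contradiction, exploiting the Gaussian structure that underlies the cross-covariance term. Setting $\dot{C}=0$ in \eqref{eq:cts-b} gives the steady-state relation
\begin{equation*}
2\alpha_0 C_{\infty}+\Cov^{\theta y}\Sigma_{\nu}^{-1}{\Cov^{\theta y}}^T=\Sigma_{\omega},
\end{equation*}
where $\Cov^{\theta y}=\E\bigl((\theta-m_{\infty})\otimes(\G(\theta)-\E\G(\theta))\bigr)$ is evaluated at $\theta\sim\N(m_{\infty},C_{\infty})$. Since $C_{\infty}$ is a covariance it is automatically positive semi-definite, and the task is to upgrade this to strict positive-definiteness.

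First I would suppose, for contradiction, that $C_{\infty}$ is singular, so there exists a nonzero $v\in\R^{N_{\theta}}$ with $C_{\infty}v=0$, and in particular $v^T C_{\infty}v=0$. The key step, which I expect to be the main obstacle, is to show that any such null direction of $C_{\infty}$ is simultaneously a left null direction of the cross-covariance, namely $v^T\Cov^{\theta y}=0$. This is exactly where the Gaussian structure is used: the scalar random variable $v^T(\theta-m_{\infty})$ has variance $v^T C_{\infty}v=0$, hence vanishes almost surely under $\N(m_{\infty},C_{\infty})$, so that
\begin{equation*}
v^T\Cov^{\theta y}=\E\bigl[v^T(\theta-m_{\infty})\,(\G(\theta)-\E\G(\theta))^T\bigr]=0.
\end{equation*}

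Finally I would contract the steady-state identity with $v$ on both sides. The first term contributes $2\alpha_0 v^T C_{\infty}v=0$, and the second contributes $(v^T\Cov^{\theta y})\Sigma_{\nu}^{-1}({\Cov^{\theta y}}^T v)=0$ by the previous step, leaving $v^T\Sigma_{\omega}v=0$. This contradicts $\Sigma_{\omega}\succ 0$, which forces $v^T\Sigma_{\omega}v>0$ for $v\neq 0$. Hence no such $v$ exists and $C_{\infty}$ is non-singular. The only substantive ingredient is the degeneracy argument delivering $v^T\Cov^{\theta y}=0$; everything else is bookkeeping, and, importantly, the conclusion never requires solving for $C_{\infty}$ explicitly, only that it satisfies the steady equation with $\Sigma_{\omega}$ strictly positive-definite.
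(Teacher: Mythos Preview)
Your proof is correct and follows essentially the same contradiction strategy as the paper: assume $C_{\infty}$ has a null vector $v$, deduce $v^T\Cov^{\theta y}=0$, then contract the steady-state equation to force $v^T\Sigma_{\omega}v=0$, contradicting $\Sigma_{\omega}\succ 0$. The only cosmetic difference is in the justification of $v^T\Cov^{\theta y}=0$: you argue that $v^T(\theta-m_{\infty})$ has zero variance and hence vanishes almost surely, whereas the paper bounds $(v^T\Cov^{\theta y}u)^2$ by Cauchy--Schwarz against $v^T C_{\infty}v=0$; both lead to the same conclusion.
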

\begin{proof}
The proof is in \ref{sec:appendix}.
\end{proof}

\section{Variants on the Basic Algorithm}
\label{sec:V}

\subsection{Enforcing Constraints}
\label{ssec:constraints}

Kalman inversion requires solving forward problems at every iteration. Failure of the forward problem to deliver physically meaningful solutions can lead to failure of the inverse problem. 
Adding constraints to the parameters~(for example, dissipation is non-negative) significantly improves the robustness of Kalman inversion.
Within the EKI there is a natural way to impose constraints, using the
fact that each iteration of the algorithm may be interpreted as solving
a set of coupled quadratic optimization problems, with coupling arising from
empirical covariances. These optimization problems are readily appended
with convex constraints, such as box (inequality) 
constraints~\cite{albers2019ensemble}; see also \cite{iglesias2016regularizing,chada2020tikhonov}. 
The UKI does not have this optimization
interpretation and so we adopt a different approach to enforcing box
constraints.

In this paper there are occasions where
we impose element-wise box constraints of the form
\begin{equation*}
    0 \leq \theta  \quad \textrm{ or }\quad \theta_{min} \leq \theta \leq \theta_{max}.
\end{equation*}
These are enforced by change of variables writing $\theta = \varphi(\tilde{\theta})$
where, for example, respectively,
\begin{equation*}
   \varphi(\tilde{\theta}) = |\tilde{\theta}| \quad \textrm{ or } \varphi(\tilde{\theta}) = \theta_{min} +  \frac{\theta_{max} - \theta_{min}}{1 + |\tilde\theta|}.
\end{equation*}
The inverse problem is then reformulated as 
\begin{equation*}
    y = \G(\varphi(\tilde\theta)) + \eta,
\end{equation*}
and the UKI methods and variants are employed with $\G \mapsto \G \circ \varphi.$

\subsection{Unscented Kalman Sampler}
\label{ssec:vir}

Consider the following stochastic dynamical system, in which $W$ is a standard
unit Brownian motion in $\R^{N_\theta}:$
\begin{subequations}
\label{eq:uks0}
\begin{align}
\dot{\theta}&=\Cov^{\theta y} \Sigma_{\eta}^{-1} \bigl(y - \G(\theta)\bigr)-C\Sigma_0^{-1}(\theta-r_0)+\sqrt{2}C^{\frac{1}{2}}\dot{W},\\
\Cov^{\theta y}&=\E\Bigl(\bigl(\theta-m\bigr)\otimes\bigl(\G(\theta)-\E \G(\theta)\bigr)\Bigr)
\end{align}
\end{subequations}
and all expectations are computed under the law of $\theta$, with respect to which
the mean and covariance are denoted as $\mean$ and $\Cov$ respectively. 
This It\`o-McKean diffusion process can be
approximated by an interacting particle system,
and the law of $\theta$ approximated using the
resulting empirical Gaussian approximation, leading to
the EKS \cite{garbuno2020interacting}; we now generalize this to an unscented version.
First consider the following evolution equations
for the mean and covariance of the Gaussian approximation
to the law of $\theta:$
\begin{subequations}
\label{eq:uks}
\begin{align}
\dot{m}&=\Cov^{\theta y} \Sigma_{\eta}^{-1} \bigl(y - \E \G(\theta)\bigr)-C\Sigma_0^{-1}(m-r_0),\\
\dot{C}&=-2\Cov^{\theta y} \Sigma_{\eta}^{-1}{\Cov^{\theta y}}^T-2C\Sigma_0^{-1}C+2C.
\end{align}
\end{subequations}
Note that the expectations are computed under the law of (\ref{eq:uks0}) and so
this is not, in general, a closed system for $(m,C).$

To obtain a closed system for $(m,C)$, we consider a Gaussian evolving according 
to the equations \eqref{eq:uks}, with matrix $\Cov^{\theta y}$ again given by
(\ref{eq:uks0}b), but now 
expectation $\E$ is computed with respect to the distribution $\N(m,C)$ so that a closed
system for $(m,C)$ is obtained.
The UKS is defined by approximating the expectations in this system 
by use of an unscented transform.

In the case where $\G$ is linear and the solution is initialized at a
Gaussian then the system \eqref{eq:uks} with expectations computed
under $\N(m,C)$ is consistent with the solution of the It\`o-McKean diffusion (\ref{eq:uks0}) governing $\theta$ -- that latter has Gaussian distribution evolving according to 
 \eqref{eq:uks}.
Furthermore, the analysis in \cite{garbuno2020interacting} shows that then
the system converges to the posterior distribution \eqref{eq:post}
at a rate $\exp(-t)$ independent of the problem being solved; this
independence of the rate on the problem conditioning may be viewed
as a consequence of affine invariance. We also mention that
the analysis in \cite{carrillo2019wasserstein} shows that, when initialized at a non-Gaussian,
the Gaussian dynamics is an attractor.
It is thus natural to consider using numerical simulations of 
\eqref{eq:uks}  to generate
approximate samples from the posterior distribution.
Illustrative examples are presented in \ref{sec:app:UKS}.

\section{Numerical Results}
\label{sec:app}
In this section, we present numerical results for Kalman-based inversion
using the proposed stochastic dynamical system~\cref{eq:std}. 

\subsection{Choice of Hyperparameters}
\label{ssec:params}
We make choices of $\Sigma_{\omega}$ and $\Sigma_{\nu}$
guided by the discussion in Remark \ref{rem:1}. However, for general nonlinear problems $\Cov_{*}$ is not explicitly defined. 
Thus we modify
the prescription given in \eqref{eq:cstar2} and instead choose 
\begin{subequations}
    \label{eq:cstar3}
    \begin{align}
    \Sigma_\nu &= 2 \Sigma_{\eta}\\
    \Sigma_{\omega} &= \bigl(2 - \alpha^{2}\bigl) \gamma \I
    \end{align}
\end{subequations}
for some $\gamma>0.$ 
For over-determined problems, when the observational noise is absent or negligible,  we take $\alpha=1.$ For under-determined problems, to avoid overfitting in the presence of noise,  we generally choose $\alpha \in (0,1)$; but we also present some under-determined problems 
with choice $\alpha=1$ to demonstrate undesirable effects from doing so. In general, cross-validation
should be invoked to determine an optimal choice of $\alpha.$ However in this paper,
 we have simply used the values $0.0, 0.5, 0.9, 1.0$ for illustrative purposes.
 To be concrete we initialize with $m_0=r_0$ and
$C_0=\gamma \I.$ Specific choices of $r_0$ and $\gamma$ will differ
between examples and will be spelled out in each case.

\subsection{Classes of Problems Studied}

For all applications, we focus mainly on the UKI; some comparisons between 
the UKI and EKI (specifically, as applied to the novel stochastic dynamical
system \eqref{eq:std} proposed here) are also presented; and computational difficulties
inherent in the rough misfit landscape experienced
by the ExKI for chaotic dynamical systems are illustrated,
and are demonstrably overcome by deploying the UKI. The applications cover a wide range of problems. They include three categories: 
\begin{enumerate}
    \item Noiseless linear problems, where over-determined, under-determined, and well-determined systems are considered.
\begin{itemize}
    \item Linear 2-parameter model problem: this problem serves as a proof-of-concept example, which demonstrates the convergence of the mean and the covariance matrix discussed in Subsection~\ref{ssec:lin}. In this case, the UKI is exact,
    as a consequence of Lemma \ref{lem:uki}; numerics are performed using only the UKI.
    \item Hilbert matrix problem: this problem illustrates
    the performance of the EKI and UKI when solving ill-conditioned inverse problems.  The EKI suffers from divergence as it is iterated. However the 
    UKI behaves well, again reflecting the exactness for linear
    problems, highlighted in Lemma \ref{lem:uki}, and the theory
    of Subsection~\ref{ssec:lin} characterizing the behaviour of
    the filtering distribution in the linear setting.
\end{itemize}
\item Noisy field recovery problems, in which we add $0\%$, $1\%$, and $5\%$ Gaussian random noise to the observation, as follows:
\begin{equation}
\label{eq:add-noise}
    y_{obs} = y_{ref} + \epsilon \odot\xi, \quad \xi \sim \N(0, \I),
\end{equation}
where $y_{ref} = \G(\theta_{ref})$,\, $\epsilon = 0\% y_{ref}, 1\% y_{ref}, \textrm{ and } 5\% y_{ref}$, and $\odot$ denotes element-wise multiplication.
It is important to distinguish between the added Gaussian random noise appearing in the data and the observation error model $\eta \sim \N(0, \Sigma_{\eta})$ used in the
development of the inversion algorithm; in essence we assume imperfect
knowledge of the noise model.\footnote{See section 7.1 of \cite{roininen2014whittle} for an example with a similar set-up; see also discussion around equation (55) in
\cite{iglesias2013evaluation} where the additive Gaussian noise used in the data
is carefully constructed to scale relative to the truth underlying it.}
Comparison of UKI and EKI is presented. EKI is shown to suffer from finite ensemble size effects, and in some cases diverges; in contrast, UKI behaves well. Thus we observe that what we have learned from the linear setting carries across to the setting of nonlinear inverse problems. This category of inversion for fields also serves to demonstrate the value of the Tikhonov regularization parameter $\alpha \in (0,1)$ in the prevention of overfitting. We consider three examples, now listed.
\begin{itemize}
    \item Darcy flow problem: to find permeability parameters in subsurface flow from measurements of pressure (or piezometric head).
    
     \item Damage detection problem:  determining the damage field in
     an elastic body from displacement observations on the surface of the structure. 
 
    \item Navier-Stokes problem: we study a two dimensional incompressible
    fluid, using the vorticity-streamfunction formulation, and recover the initial vorticity from noisy observations of the vorticity field at 
    later times.
     
\end{itemize}
\item Chaotic problems, in which the parameters are learned from  time-averaged statistics. For these problems, which are over-determined, we demonstrate that
choosing $\alpha=1$ is satisfactory, relying on the implicit regularization inherent in the approximate LMA interpretation of ExKI and UKI, as discussed in Subsection~\ref{ssec:LMA}. The three examples considered are now listed.
\begin{itemize}
    \item Lorenz63 model problem: we present a discussion of why adjoint based methods including ExKI, fail; we then demonstrate that the UKI succeeds. We attribute the success of the UKI to the averaging effect
    induced by the unscented transform and discussed in Subsection~\ref{ssec:UA}.
    \item Multiscale Lorenz96 problem: we study a scale-separated setting,
    in which the closure for the fast dynamics is learned from time-averaged statistics. 
    \item Idealized general circulation model problem: this is a 3D Navier-Stokes problem with a hydrostatic assumption,  and simple
    parameterized subgrid-scale models; we learn the parameters of the
    subgrid-scale model from time-averaged data. This problem demonstrates the potential of applying the UKI for large scale chaotic inverse problems. 
\end{itemize}
\end{enumerate}


\subsection{Linear 2-Parameter Model Problem}
\label{sec:app-lin}
Consider the 2-parameter linear inverse problem to
find $\theta\in \R^2$ from $y \in \R^{N_y}$ where
$y = G\theta$ with $G \in \R^{N_y \times 2}$ and no noise
is present in the data.
We explore the following three scenarios corresponding to
$N_y=3,2$ and $1$:
\begin{itemize}
    \item non-singular~(well-determined) system~(NS) $N_y=2$
   \begin{align*}
   y = \begin{bmatrix}
        3 \\
        7 
       \end{bmatrix}~~~
       G = \begin{bmatrix}
        1 & 2\\
        3 & 4
    \end{bmatrix}~~~
    \theta_{ref} = \begin{bmatrix}
        1 \\
        1 
       \end{bmatrix};
   \end{align*} 
\item over-determined system~(OD) $N_y=3$
    \begin{align*}
    y = \begin{bmatrix}
        3 \\
        7 \\
        10
       \end{bmatrix}~~~
       G = \begin{bmatrix}
        1 & 2\\
        3 & 4 \\
        5 & 6
    \end{bmatrix}~~~
    \theta_{ref} = \begin{bmatrix}
        1/3 \\
        17/12
       \end{bmatrix};
   \end{align*} 
    \item under-determined system~(UD) $N_y=1$
    \begin{align*}
    y = \begin{bmatrix}
        3
       \end{bmatrix}~~~
       G = \begin{bmatrix}
        1 & 2\\
    \end{bmatrix}~~~
       \theta_{ref} = \begin{bmatrix}
        1 \\
        1 
       \end{bmatrix}
       + c\begin{bmatrix}
        2 \\
        -1 
       \end{bmatrix}, \,\,c \in \R.
   \end{align*} 
\end{itemize}
Since there is no noise in the data we
have $\Sigma_\eta=0$ and $\Phi$ is undefined. 
To proceed we apply our methodology as if 
$\Sigma_\eta=0.1^2 \I$, corresponding to a misspecified
model. Then we may set $$\displaystyle \theta_{ref} = \argmin_{\theta}\Phi(\theta)=\argmin_{\theta}\frac{1}{2}\lVert(y - G\theta)\rVert^2.$$  
Note that for the OD and NS cases $\theta_{ref}$ is a single point, whereas in the UD case, $\theta_{ref}$ comprises a one-parameter ($c \in \R$) family of possible solutions.

We choose $r_0 = 0$, $\gamma=0.5^2$ and also initialize the UKI at $\theta_0 \sim \N(0, \gamma\I).$
In both the NS and OD cases $Range(G^T)=\R^{N_{\theta}}$ and so
we set $\alpha=1$, guided by Theorem \ref{th:lin_converge}. 
In the UD case $Range(G^T)\ne\R^{N_{\theta}}$ and we consider both $\alpha=1$  and $\alpha=0.5$, illustrating Proposition \ref{th:lin_converge3} and Theorem \ref{th:lin_converge} respectively.
The convergence of the parameter vectors $\{\mean_n\}$ is depicted in \cref{fig:Lin-theta}.
In all scenarios, the mean vectors converge 
to a limiting value exponentially fast. In the cases of NS and OD this is as
predicted by Theorem \ref{th:lin_converge} and, since $\alpha=1$, $\Phi_R$ and $\Phi$
coincide so that $m_{\infty}=\theta_{ref}.$ 
For UD with $\alpha=1$ and $\alpha=0.5$, the mean vectors converge to $[0.6\quad 1.2]^T$ and $[0.597\quad 1.195]^T$ respectively, following Proposition~\ref{th:lin_converge3} and Theorem~\ref{th:lin_converge}. However, the limiting mean for $\alpha=1$ depends on the initial conditions for the algorithm, whereas for $\alpha=0.5$ it is
uniquely determined.
The convergence of the covariance matrices $\{\Cov_n\}$ to $\Cov_{\infty}$ is depicted in \cref{fig:Lin-Sigma}, with NS, OD, and UD~($\alpha = 0.5$) on the left and UD~($\alpha = 1.0$) on the right. 
In the cases NS, OD, and UD~($\alpha = 0.5$), the estimated covariance matrices converge to the desired values~(the steady state of \cref{eq:cov_steady}, as predicted by Theorem \ref{th:lin_converge}).
In the case UD, the covariance matrices~$\{\Cov_n\}$ diverge to $+\infty$ (see Proposition \ref{th:lin_converge3}); nonetheless, this divergence of the covariance matrix does not affect the exponential convergence of the mean vector. 
In general, we advocate the use of $\alpha\in(0,1)$ for
under-determined problems and have set $\alpha=1$ for problem UD here only to illustrate some
of the issues that arise from doing so.

\begin{figure}[ht]
\centering
    \includegraphics[width=0.6\textwidth]{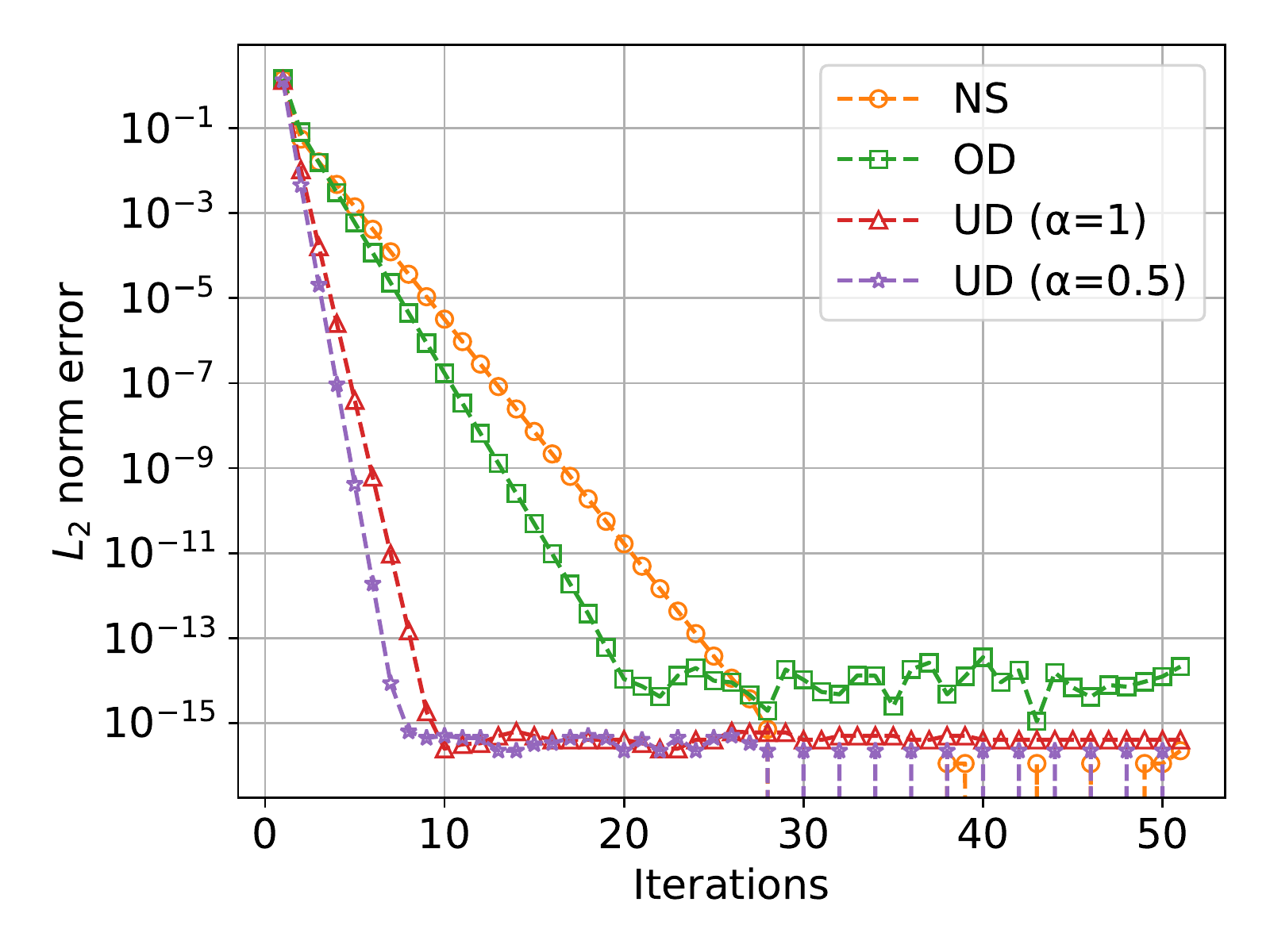}
    \caption{$L_2$ error $\lVert\mean_n - \theta_{ref}\rVert_2$ of the linear 2-parameter model problem. NS: non-singular system,  OD: over-determined system, UD: under-determined system.}\label{fig:Lin-theta}
    \end{figure}
    
\begin{figure}[ht]
\centering
    \includegraphics[width=0.49\textwidth]{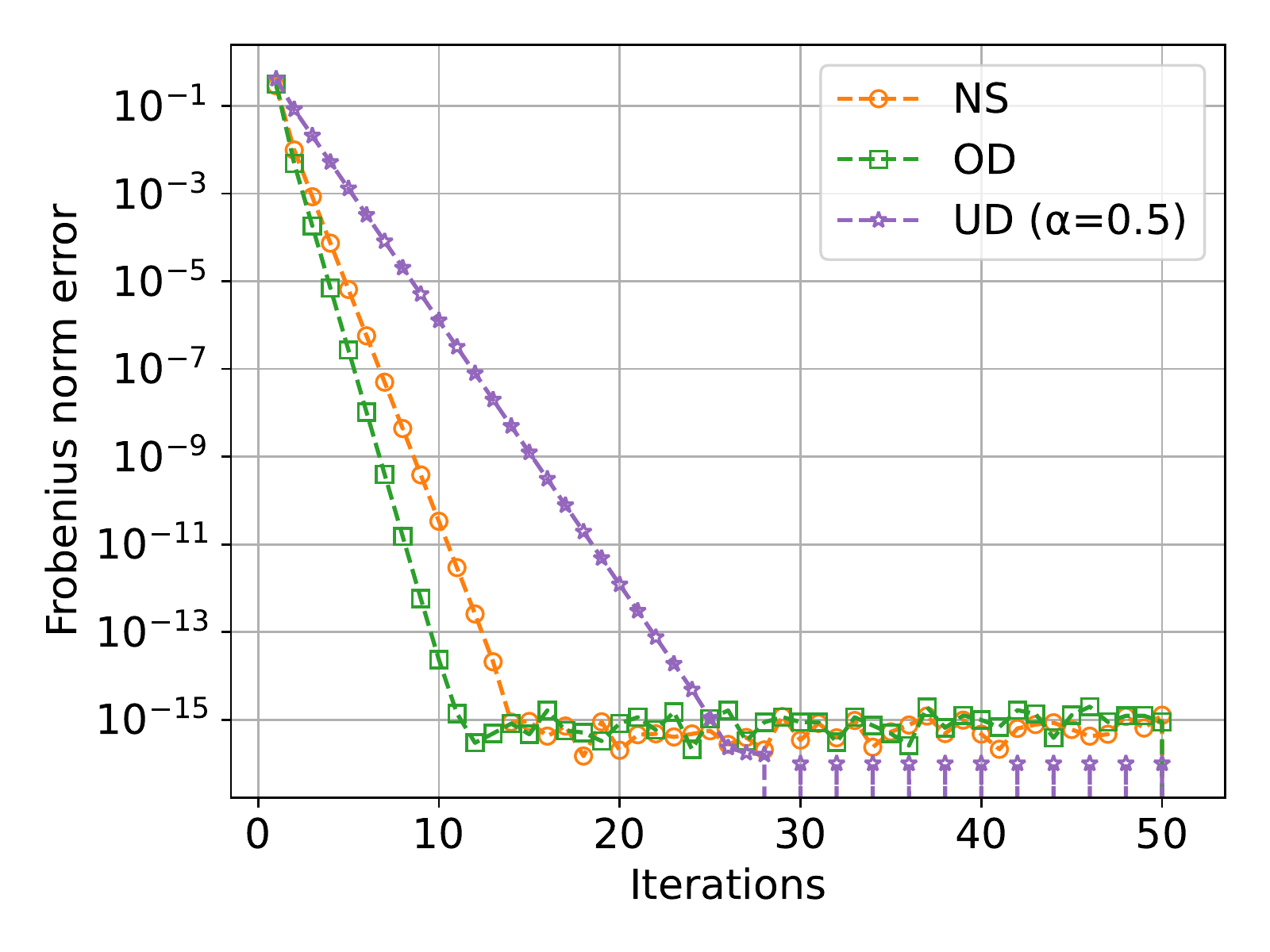}
    \includegraphics[width=0.49\textwidth]{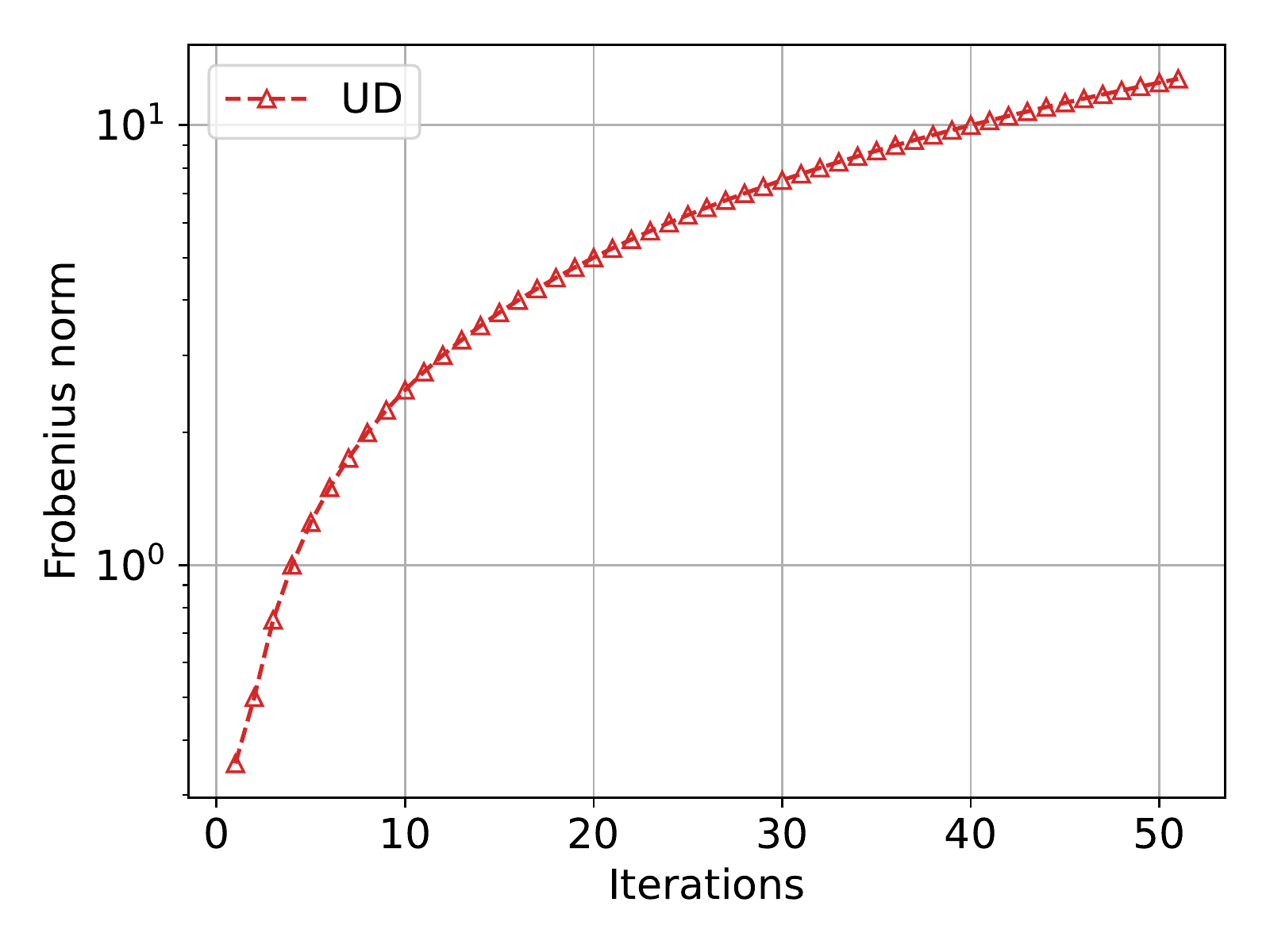}
    \caption{Frobenius norm $\lVert\Cov_n - \Cov_{\infty}\rVert_F$~(left) for non-singular~(NS) and over-determined~(OD) systems, and $\lVert\Cov_n\rVert_F$~(right) for the under-determined~(UD) system of the linear 2-parameter model problem.}
    \label{fig:Lin-Sigma}
\end{figure}

\subsection{Hilbert Matrix Problem}

\label{sec:app-Hilbert}

In this example $N_y=N_\theta.$ We define the Hilbert matrix $G \in R^{N_{\theta}\times N_{\theta}}$ by its entries
$$G_{i,j} = \frac{1}{i+j-1}.$$
The condition number of $G$ grows as $\bigO\Big((1+\sqrt{2})^{4N_{\theta}}/\sqrt{N_{\theta}}\Big)$ \cite{beckermann2000condition}. 
We consider the inverse problem of finding $\theta \in \R^{N_{\theta}}$ from
$y \in \R^{N_y}$ where $y = G\theta_{ref}$ and we define
$\theta_{ref}:= \mathds{1}.$ 
The ill-conditioning of $G$ makes the determination of $\theta$ from $y$ difficult.
Traditional linear solvers fail for such a problem.\footnote{$G\backslash y$ in Julia leads to an $L_2$ error of $4250.142$ for $N_\theta = 100$.} 

We consider two scenarios: $N_{\theta} = 10$ and $N_{\theta} = 100$. 
As in the previous linear case study we assume a model misspecification setting in
which $\Sigma_\eta=0.1^2 \I$, even though the data itself contains no noise, and we take
$\alpha=1.$ We set $r_0 = 0$ and $\gamma=0.5^2$. Thus  $\theta_0 \sim \N(0, 0.5^2\I)$.
Both UKI and EKI are applied. For the EKI, the ensemble sizes are set to $J = 2N_\theta+1$ 
and $J = 100N_\theta+1$. The convergence of the parameter vector $\mean_n$ is depicted in \cref{fig:Hilbert}. The UKI converges, but the convergence rate depends on the condition 
number of $G$, slowing as it grows. The EKI converges to a certain accuracy as fast as the 
UKI and then diverges. This divergence is related to the finite ensemble size, and is delayed by
use of the larger $J$. Indeed in the mean-field limit $J=\infty$ the EKI will coincide with
the UKI. This example clearly demonstrates the benefits of the UKI over the EKI.
\begin{figure}[ht]
\centering
    \includegraphics[width=0.49\textwidth]{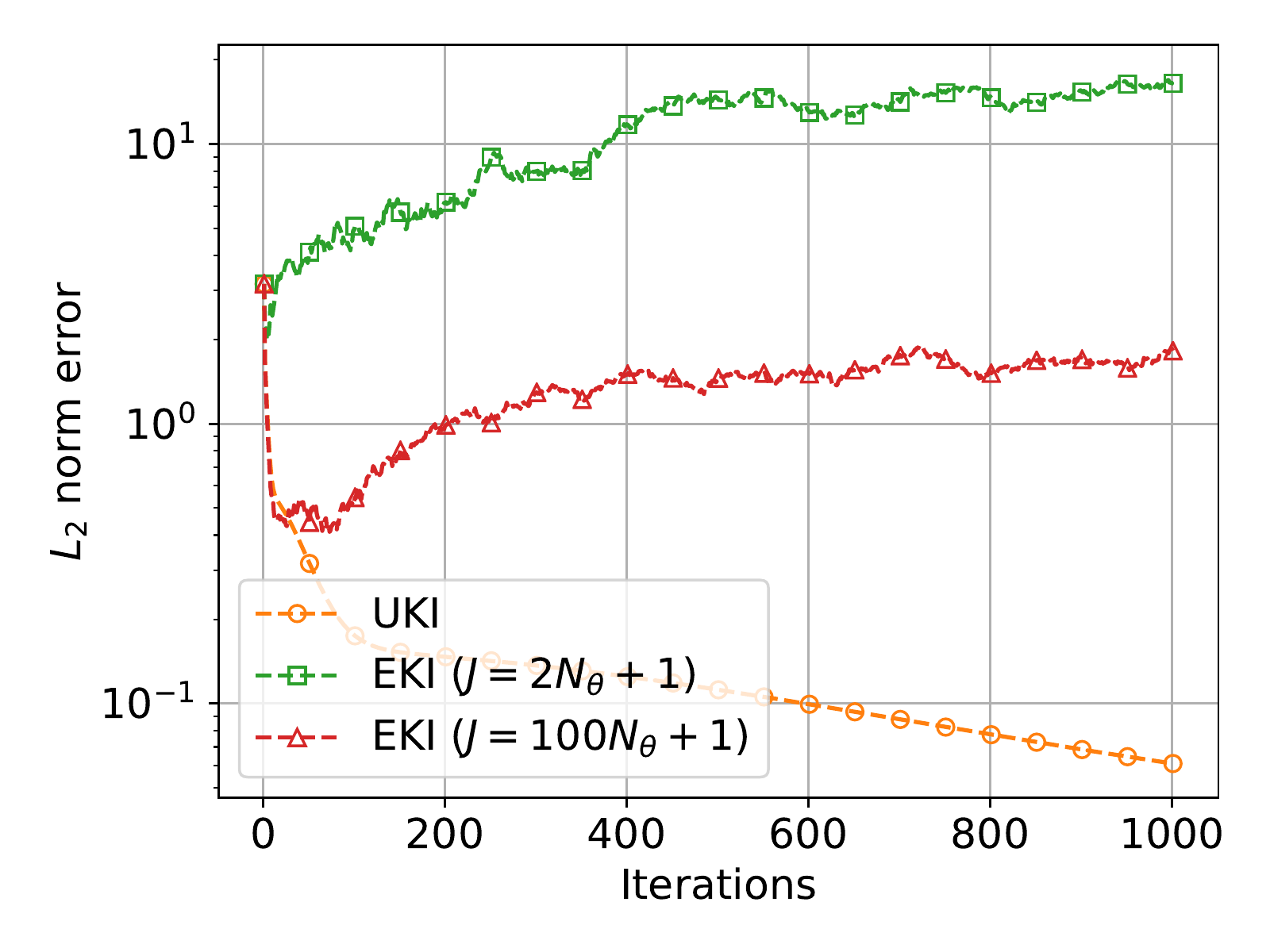}
    \includegraphics[width=0.49\textwidth]{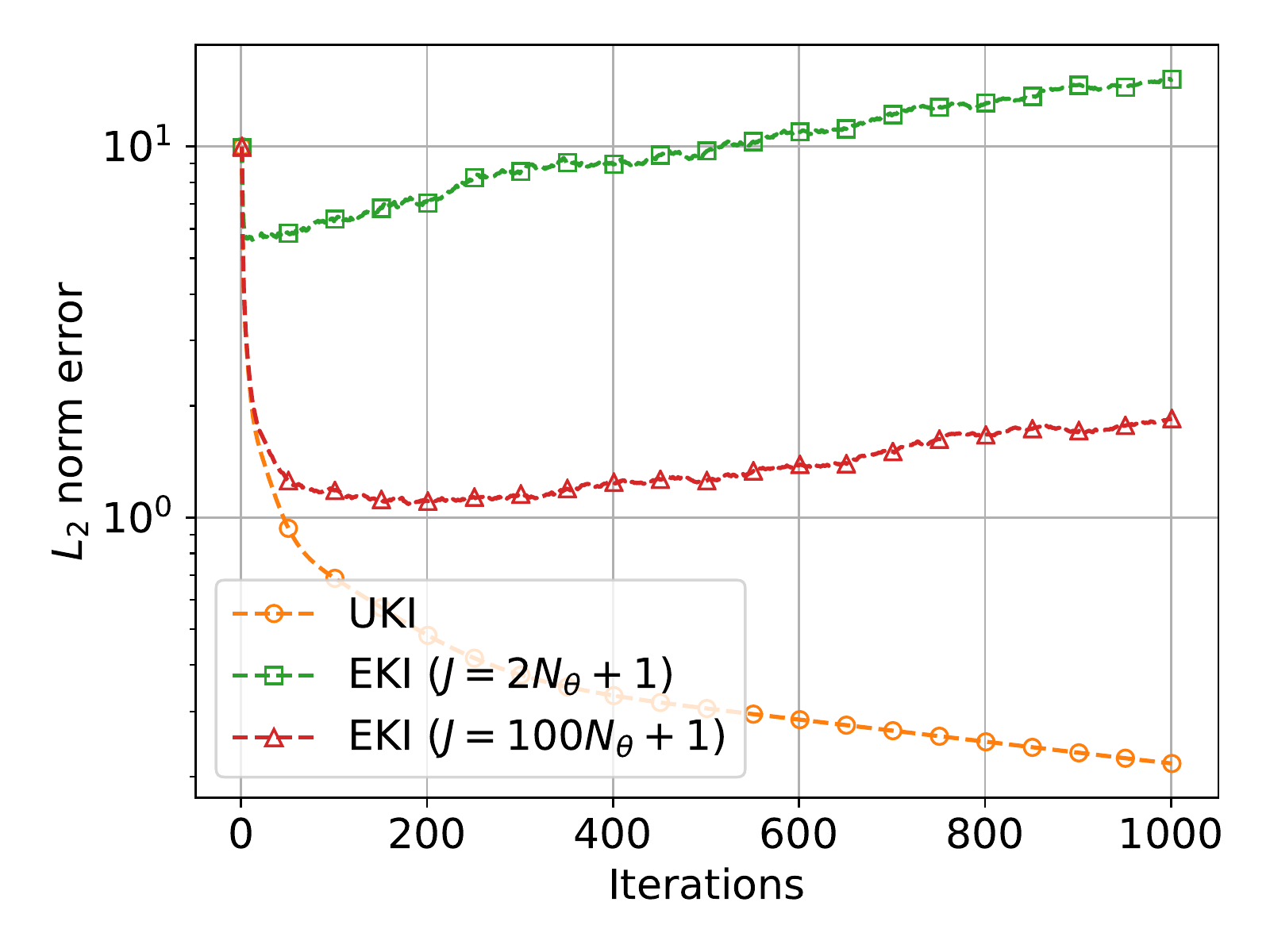}
    \caption{
    $L_2$ error $\lVert\mean_n - \theta_{ref}\rVert_2$ of the Hilbert inverse problem with $N_\theta = 10$~(left) and $N_\theta = 100$~(right).}
    \label{fig:Hilbert}
\end{figure}

\subsection{Darcy Flow Problem}
\label{sec:app:Darcy}
Consider the Darcy flow equation on the two-dimensional spatial domain $D=[0,1]^2$. The
forward model is to find the pressure field $p(x)$ in a porous medium defined by a positive permeability field $a(x,\theta)$:
\begin{align*}
    -\nabla \cdot (a(x, \theta) \nabla p(x)) &= f(x), \quad &&x\in D,\\
    p(x) &= 0, \quad &&x\in \partial D.
\end{align*}
For simplicity, we have imposed homogeneous Dirichlet boundary conditions on the pressure  at the boundary $\partial D$. The fluid source field $f$ is defined as
\begin{align*}
    f(x_1, x_2) = \begin{cases}
               1000 & 0 \leq x_2 \leq \frac{4}{6}\\
               2000 & \frac{4}{6} < x_2 \leq \frac{5}{6}\\
               3000 & \frac{5}{6} < x_2 \leq 1\\
            \end{cases}. 
\end{align*}
We study the inverse problems of finding $a$ from noisy measurements of $p$.
We place a prior on the permeability field $a(x,\theta)$ by
assuming that $\log a(x, \theta)$ is a centred Gaussian with covariance 
$$\mathsf{C} = (-\Delta + \tau^2 )^{-d};$$
here $-\Delta$ denotes the Laplacian on $D$ subject to homogeneous Neumann boundary conditions on the space of spatial-mean zero functions, 
$\tau > 0$ denotes the inverse length scale of the random field and $d  > 0$ determines its regularity ($\tau=3$ and $d=2$ in the present study).
See~\cite{chada2020tikhonov,dunlop2017hierarchical,garbuno2020interacting,nelsen2020random} for examples. The parameter $\theta$ represents the countable set of coefficients in the Karhunen-Lo\`{e}ve~(KL) expansion of
the Gaussian random field:
\begin{equation}
\label{eq:KL-2d}
    \log a(x,\theta) = \sum_{l\in K} \theta_{(l)}\sqrt{\lambda_l} \psi_l(x),
\end{equation}
where $K = \Z^{0+}\times\Z^{0+} \setminus \{0,0\}$, 
$\theta_{(l)} \sim \N(0,1)$ i.i.d. and the eigenpairs are of the form
\begin{equation*}
    \psi_l(x) = \begin{cases}
                 \sqrt{2}\cos(\pi l_1 x_1)              & l_2 = 0\\
                 \sqrt{2}\cos(\pi l_2 x_2)              & l_1 = 0\\
                 2\cos(\pi l_1 x_1)\cos(\pi l_2 x_2)    & \textrm{otherwise}\\
                 \end{cases},
                 \qquad \lambda_l = (\pi^2 |l|^2 + \tau^2)^{-d}.
\end{equation*}
The KL expansion~\cref{eq:KL-2d} can be rewritten as a sum over $\Z^{0+}$ rather than a lattice: 
\begin{equation}
\label{eq:KL-1d}
    \log a(x,\theta) = \sum_{k\in \Z^{0+}} \theta_{(k)}\sqrt{\lambda_k} \psi_k(x),
\end{equation}
where the eigenvalues $\lambda_k$ are in descending order.
In practice, we truncate this sum to $N_\theta$ terms, based on the
largest $N_\theta$ eigenvalues, and hence $\theta\in\R^{N_\theta}$.
The forward problem is solved by a finite difference method on an $80 \times 80$ grid. 

\begin{figure}[ht]
\centering
\includegraphics[width=0.5\textwidth]{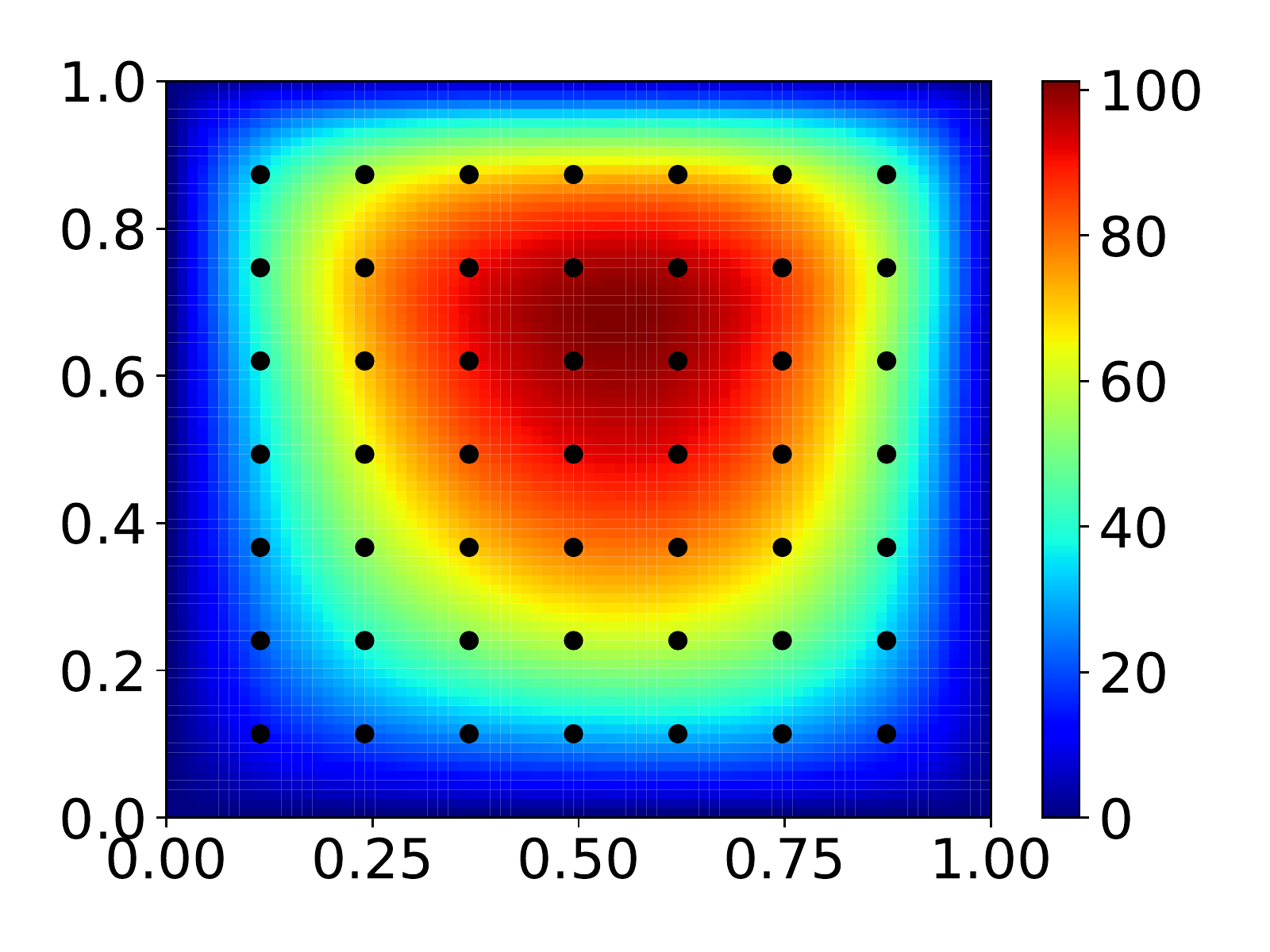}
\caption{The pressure field of the Darcy flow problem and the $49$ equidistant pointwise measurements~(black dots).}
\label{fig:Darcy-obs}
\end{figure}

For the inverse problem, the observation $y_{ref}$ consists of pointwise measurements of the pressure value $p(x)$ at $49$ equidistant points in the domain~(See~\cref{fig:Darcy-obs}). 
We generate a truth random field $\log a_{{ref}}(x)$ with $\theta \sim \N(0, \I)$ in $\R^{256}$ (i.e. we use the first $256$ KL modes) to construct the observation $y_{ref}$; different levels of noise are added to make
data $y_{obs}$ as explained in \eqref{eq:add-noise}.
Using this data, we consider two incomplete parameterization scenarios:  solving for the first $32$ KL modes~($N_\theta=32$) and for the first $8$ KL modes~($N_\theta=8$).  EKI and UKI are both applied. We take $r_0 = 0$ and $\gamma=1$ so that $\theta_0 \sim \N(0, \I)$. The observation error satisfies $\eta \sim \N(0, \I)$. For the EKI, the ensemble size is set to be $J = 100$, which is larger than the number of $\sigma-$points used in UKI~($2N_\theta+1$). 

For the $N_\theta = 32$ case, the convergence of the log-permeability fields $\log a(x, \mean_n)$ and the optimization errors~\eqref{eq:KI2} at each iteration for different noise levels are depicted in \cref{fig:Darcy-32-converge}; the top row shows the relative $L_2$ errors in the estimate of $\log a$ and the bottom row shows the optimization errors~(data-misfit), left to right corresponds to different noise levels in the data.
Without explicit regularization~($\alpha=1.0$), both UKI and EKI suffer from overfitting for noisy scenarios: the optimization errors keep decreasing, but the parameter errors show the ``U-shape'' characteristic of overfitting. Adding regularization~($\alpha=0.5$) relieves the overfitting. 
The estimated log-permeability fields $\log a(x,m_n)$ at the 50th iteration and the truth random field are depicted in \cref{fig:Darcy-32}.  Both UKI and EKI deliver similar results and these estimated log-permeability fields capture main features of the truth random field.

\begin{figure}[ht]
\centering
    \includegraphics[width=0.32\textwidth]{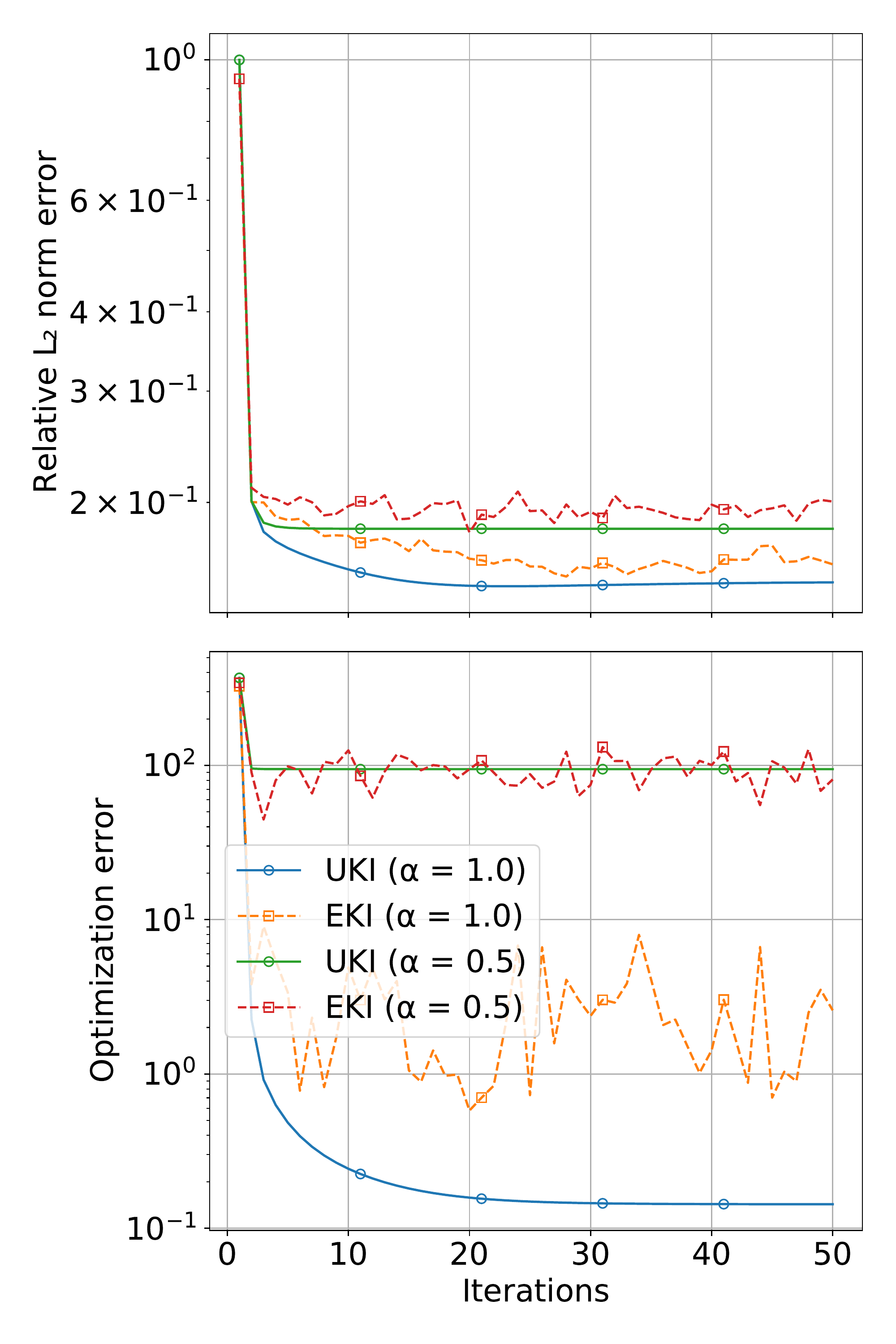}
    \includegraphics[width=0.32\textwidth]{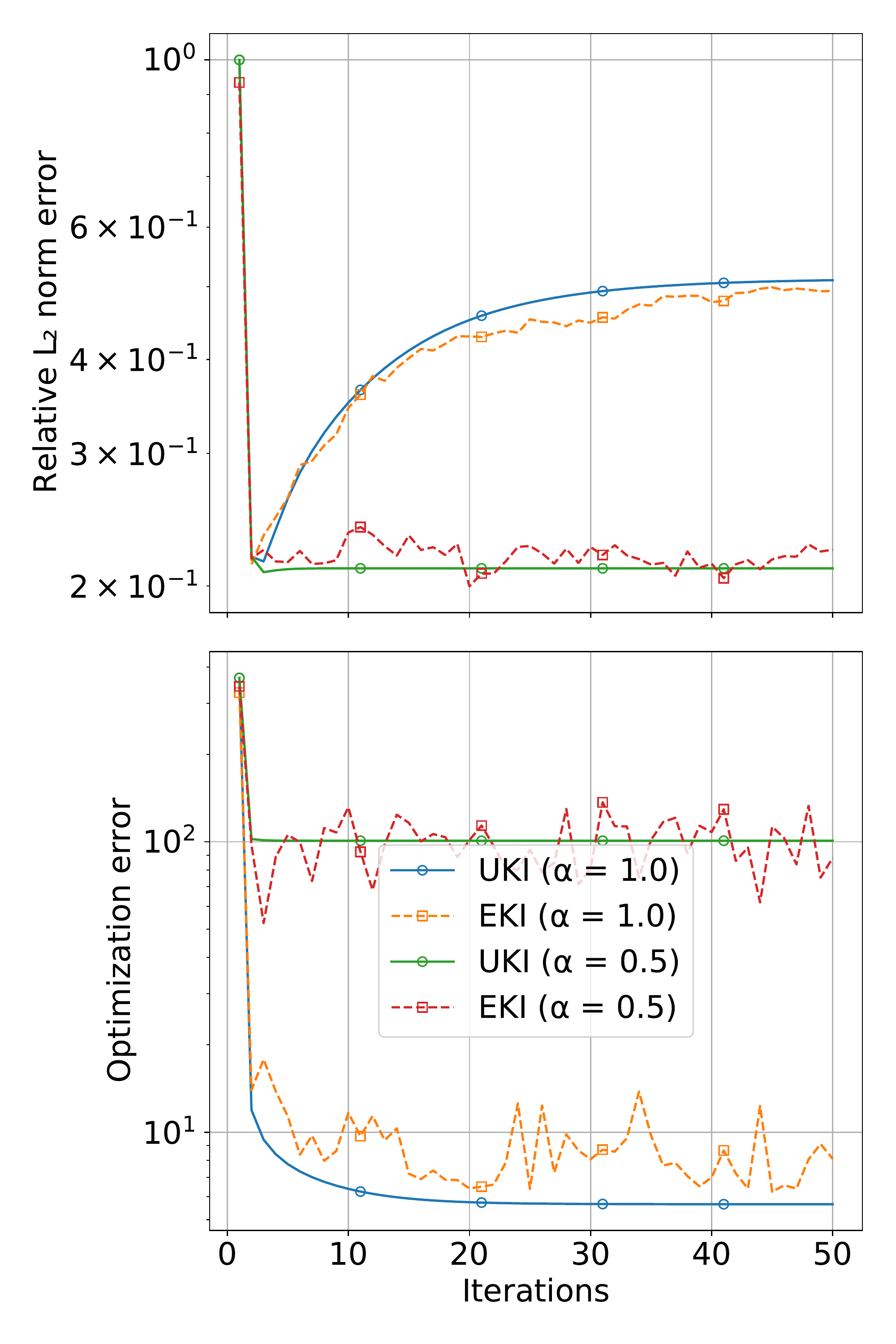}
    \includegraphics[width=0.32\textwidth]{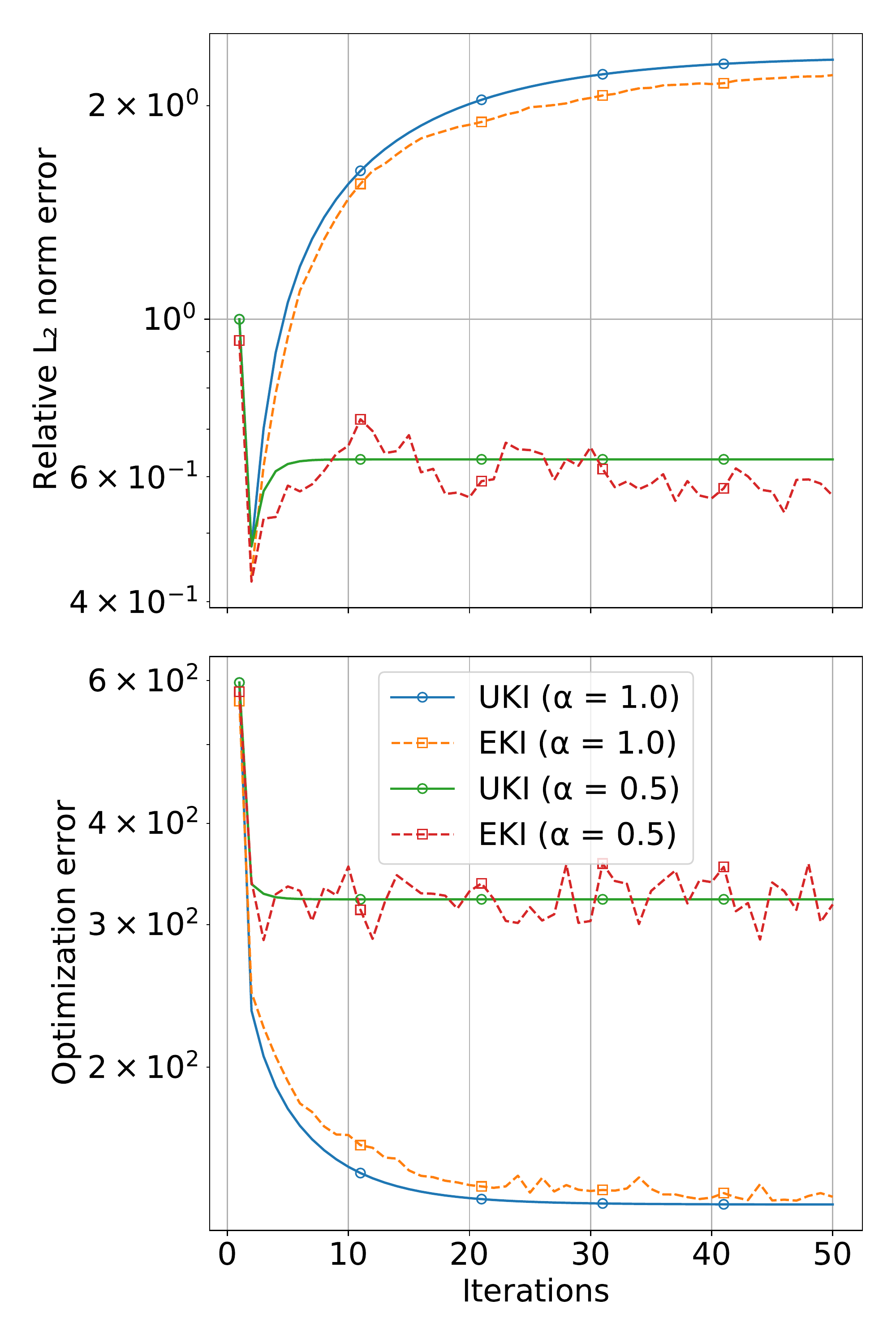}
    \caption{
    Relative error $\displaystyle \frac{\lVert\log a(x, \mean_n) - \log a_{ref}(x)\rVert_2}{\lVert\log a_{ref}(x)\rVert_2}$ (top) and the optimization error $\displaystyle \frac{1}{2}\lVert\Sigma_{\eta}^{-\frac{1}{2}} (y_{obs} - \py_n)\rVert^2$~(bottom) of the Darcy problem~($N_{\theta}=32$) with different noise levels: noiseless (left), $1\%$ error (middle), and $5\%$ error (right).}
    \label{fig:Darcy-32-converge}
\end{figure}

\begin{figure}[ht]
\centering
    \includegraphics[width=0.72\textwidth]{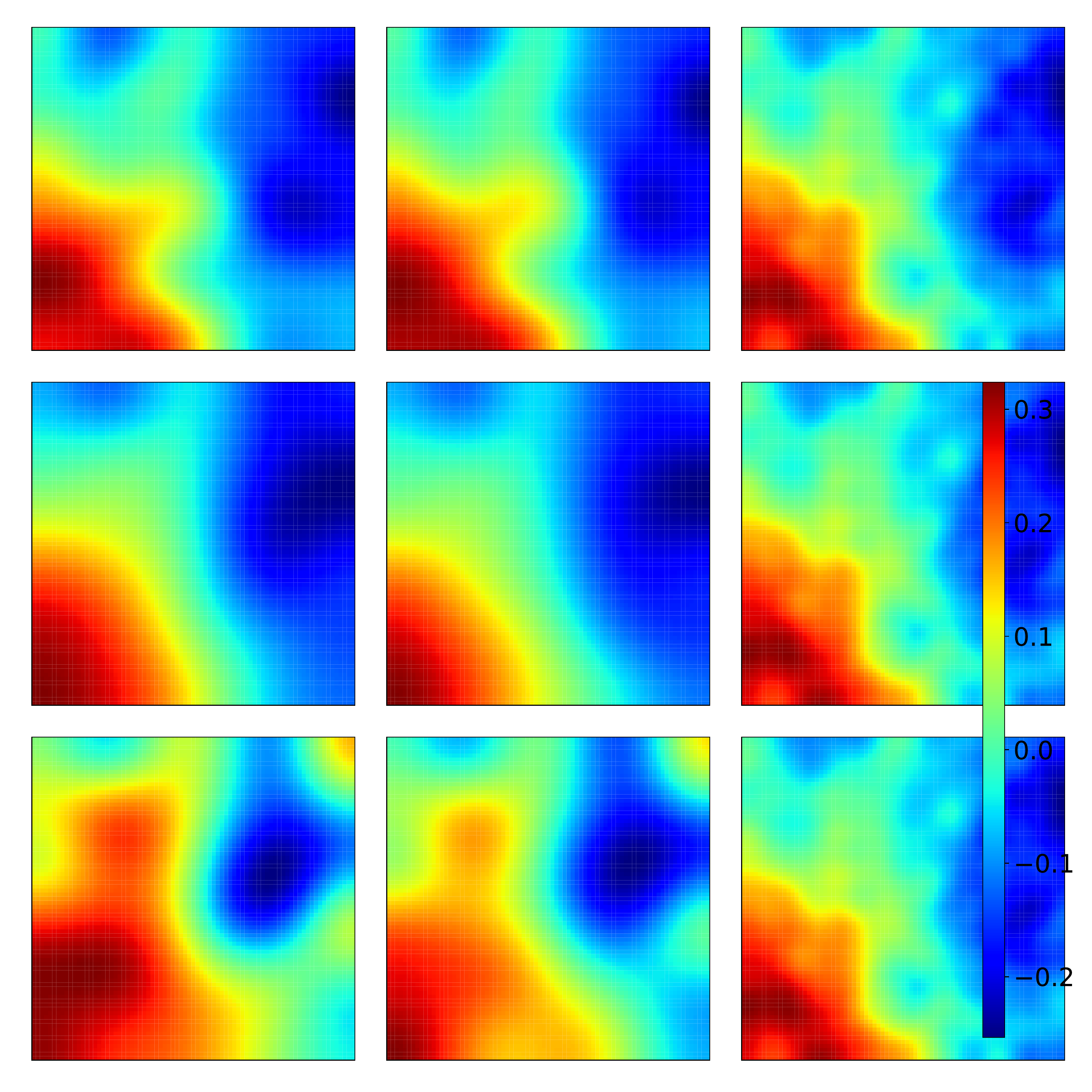}
    \caption{
    Log-permeability fields $\log a(x, \mean_n)$ with $N_{\theta}=32$ obtained by UKI, EKI, and the truth~(left to right) for different noise levels: noiseless~$\alpha=1$~(top), $1\%$ noise~$\alpha=0.5$~(middle), $5\%$ noise~$\alpha=0.5$~(bottom).}
    \label{fig:Darcy-32}
\end{figure}

For the $N_\theta = 8$ case, the convergence of the log-permeability fields $\log a(x, \mean_n)$ and the optimization errors at each iteration for different noise levels are depicted in \cref{fig:Darcy-8-converge}. 
Even without explicit regularization~($\alpha=1.0$), none of these Kalman inversions suffer from overfitting. Both UKI and EKI lead to similar parameter errors and optimization errors. 
The estimated log-permeability fields $\log a(x,m_n)$ at the 50th iteration for different noise levels, obtained by the UKI and the truth random field, are depicted in \cref{fig:Darcy-8}. Comparing with the $N_\theta = 32$ case, all Kalman inversions with $N_{\theta} = 8$ perform better for the $5\%$ noise scenario. This indicates the possibility of regularizing the inverse problem by reducing the parameter dimensionality.

Finally we observe the smoothness, as a function of the iteration number, of the UKI in comparison to EKI. This may be seen in all the experiments undertaken in the Darcy flow example.

\begin{figure}[ht]
\centering
    \includegraphics[width=0.32\textwidth]{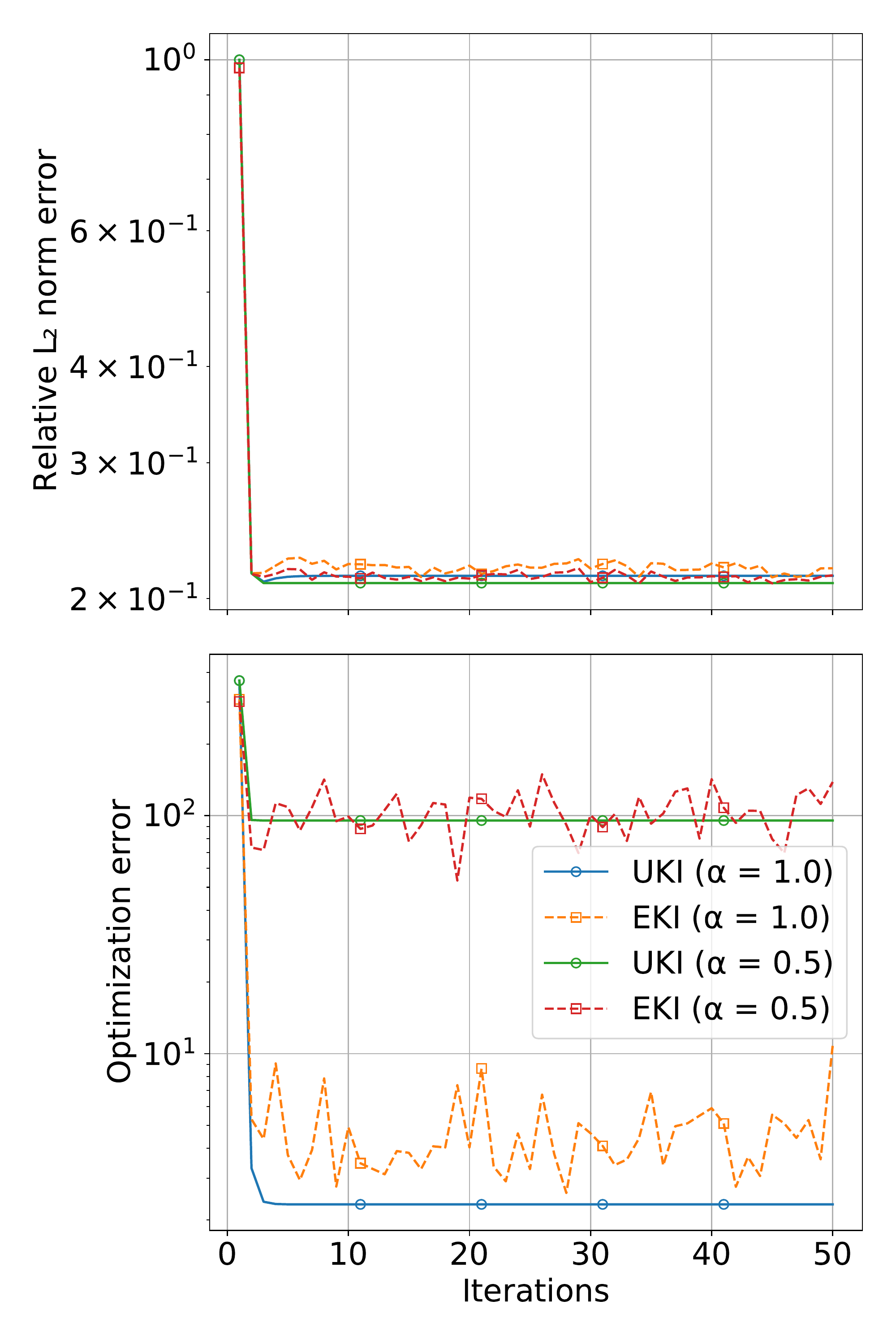}
    \includegraphics[width=0.32\textwidth]{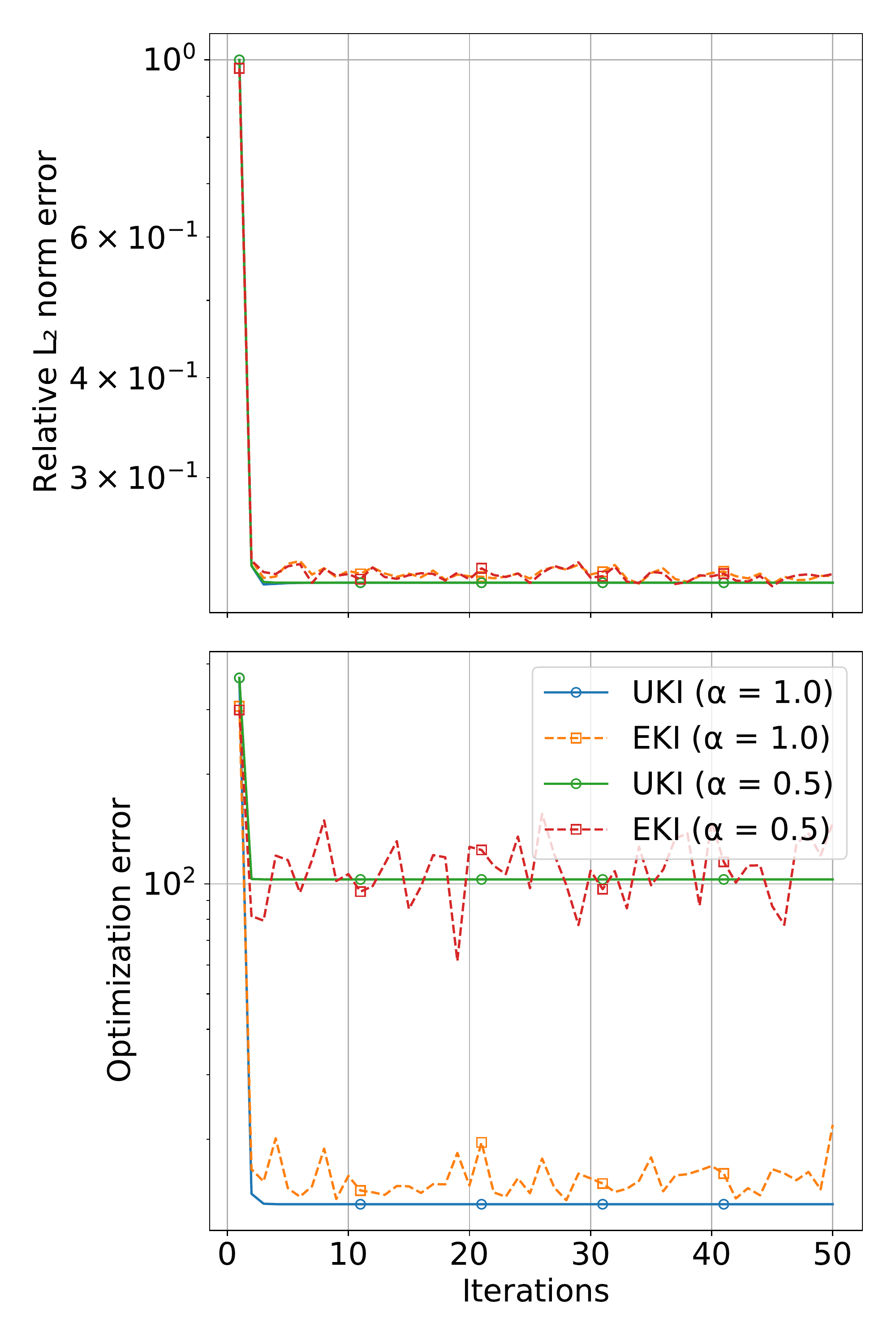}
    \includegraphics[width=0.32\textwidth]{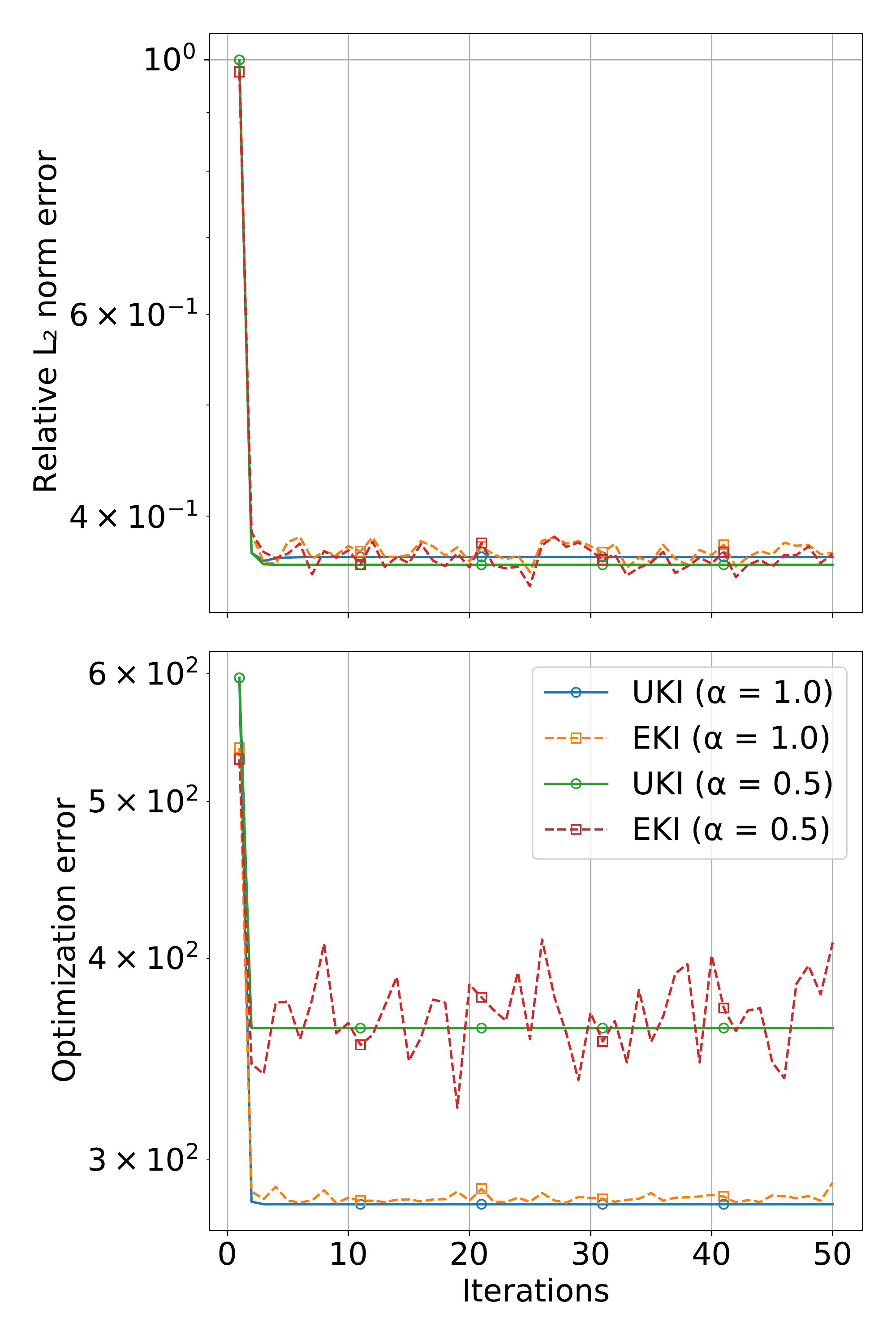}
    \caption{
     Relative error $\displaystyle \frac{\lVert\log a(x, \mean_n) - \log a_{ref}(x)\rVert_2}{\lVert\log a_{ref}(x)\rVert_2}$ (top) and the optimization error $\displaystyle \frac{1}{2}\lVert\Sigma_{\eta}^{-\frac{1}{2}} (y_{obs} - \py_n)\rVert^2$~(bottom) of the Darcy problem~($N_{\theta}=8$) with different noise levels: noiseless (left), $1\%$ error (middle), and $5\%$ error (right).}
    \label{fig:Darcy-8-converge}
\end{figure}

\begin{figure}[ht]
\centering
    \includegraphics[width=0.96\textwidth]{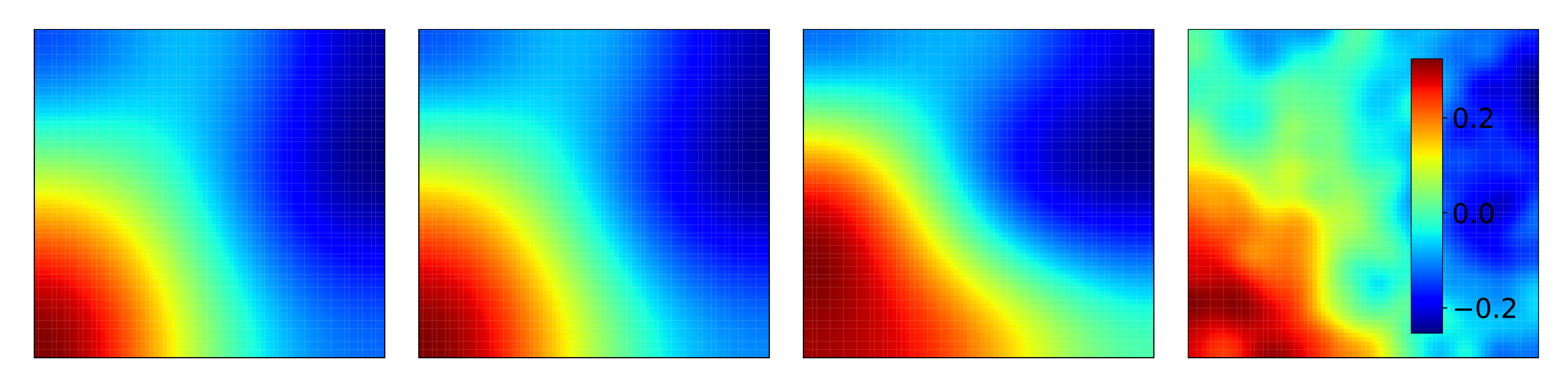}
    \caption{
    Log-permeability fields $\log a(x, \mean_n)$ with $N_{\theta}=8$ obtained by the UKI and the truth~(right) for different noise levels: noiseless~$\alpha=1$~(left), $1\%$ noise~$\alpha=1$~(middle-left), $5\%$ noise~$\alpha=1$~(middle-right).}
    \label{fig:Darcy-8}
\end{figure}

\subsection{Damage Detection Problem}
\label{sec:app:Damage}

Consider a thin linear elastic arch-like plate,  which is fixed on the bottom edges $\Gamma_u$. A traction boundary condition is applied on the top edge $\Gamma_{t_1}$, with distributed load $\bar{t} = (2, -20)$, and a traction free boundary condition is applied on the remaining edges $\Gamma_{t_2}$. See \cref{fig:Damage-obs}
The equations of linear elastostatics with plane stress assumptions are expressed in terms of the (Cauchy) stress tensor $\sigma$ and take the form
\begin{equation}
\label{eq:pde}
    \begin{split}
    \nabla \cdot \sigma + b &= 0 \textrm{ in } \Omega, \\
    u &= 0 \textrm{ on } \Gamma_{u}, \\
    \sigma \cdot n &= \bar{t} \textrm{ on } \Gamma_{t_1},\\
    \sigma \cdot n &= 0 \textrm{ on } \Gamma_{t_2}.
    \end{split}
\end{equation}
Here $u$ is the displacement vector, $b = 0$ is the body force vector, $\Omega\in \R^2$ is the bounded domain occupied by the plate. The strain tensor is
\begin{equation}
\varepsilon_{mn} = \frac{1}{2}\Big(\frac{\partial u_n}{\partial x_m} +  \frac{\partial u_m}{\partial x_n}\Big).
\end{equation}
The linear constitutive relation between strain and stress is written as
\begin{equation}
\sigma_{ij} = \mathsf{C}_{ijmn}(E, \nu) \varepsilon_{mn}.
\end{equation}
Here $\mathsf{C}_{ijmn} $ are the constitutive tensor components, which depend on the Young's modulus $E$ and Poisson's ratio $\nu$; throughout
this study, we fix $\nu=0.4$ and focus on learning the spatially-dependent
damage information present in the field $E.$
The damage is assumed to be isotropic elasticity-based damage with 
\begin{equation*}
    E(x, \theta) = \big(1 - \omega (x, \theta)\big) E_0.
\end{equation*}
Throughout this study, we fix $E_0 = 1000$, and $\omega (x, \theta)$ is the scalar-valued damage variable, which varies between zero (no damage)
to one (complete damage). 
The truth damage field~(See~\cref{fig:Damage-obs}-left) is 
\begin{equation*}
\begin{split}
&\omega_{ref} (x) = a_1e^{-\frac{1}{2}(x - x_1)\Sigma^{-1}_1(x - x_1)} + a_2e^{-\frac{1}{2}(x - x_2)\Sigma^{-1}_2(x - x_2)} + a_3e^{-\frac{1}{2}(x - x_3)\Sigma^{-1}_3(x - x_3)},\\
&a_1=0.8,\, a_2=0.6,\, a_3=0.5,\,  \\
&x_1 = \begin{bmatrix} 50\\50\end{bmatrix},\,
x_2 = \begin{bmatrix} 250\\160\end{bmatrix},\,
x_3 = \begin{bmatrix} 380\\100\end{bmatrix},\,
\Sigma_1 = \begin{bmatrix} 200&0\\0&200\end{bmatrix},\, 
\Sigma_2 = \begin{bmatrix} 800&0\\0&400\end{bmatrix},\, 
\Sigma_3 = \begin{bmatrix} 100&0\\0&400\end{bmatrix},\, 
\end{split}
\end{equation*} 
and may be seen to exhibit three flaws. Noise is added to the observations on the
boundary as in \eqref{eq:add-noise}. 
The forward equation is solved by the finite element method with $384$ quadratic quadrilateral elements~($1649$ nodes) using
the \texttt{NNFEM} library~\cite{huang2020learning,xu2020learning}.

\begin{figure}[ht]
\centering
\includegraphics[width=0.45\textwidth]{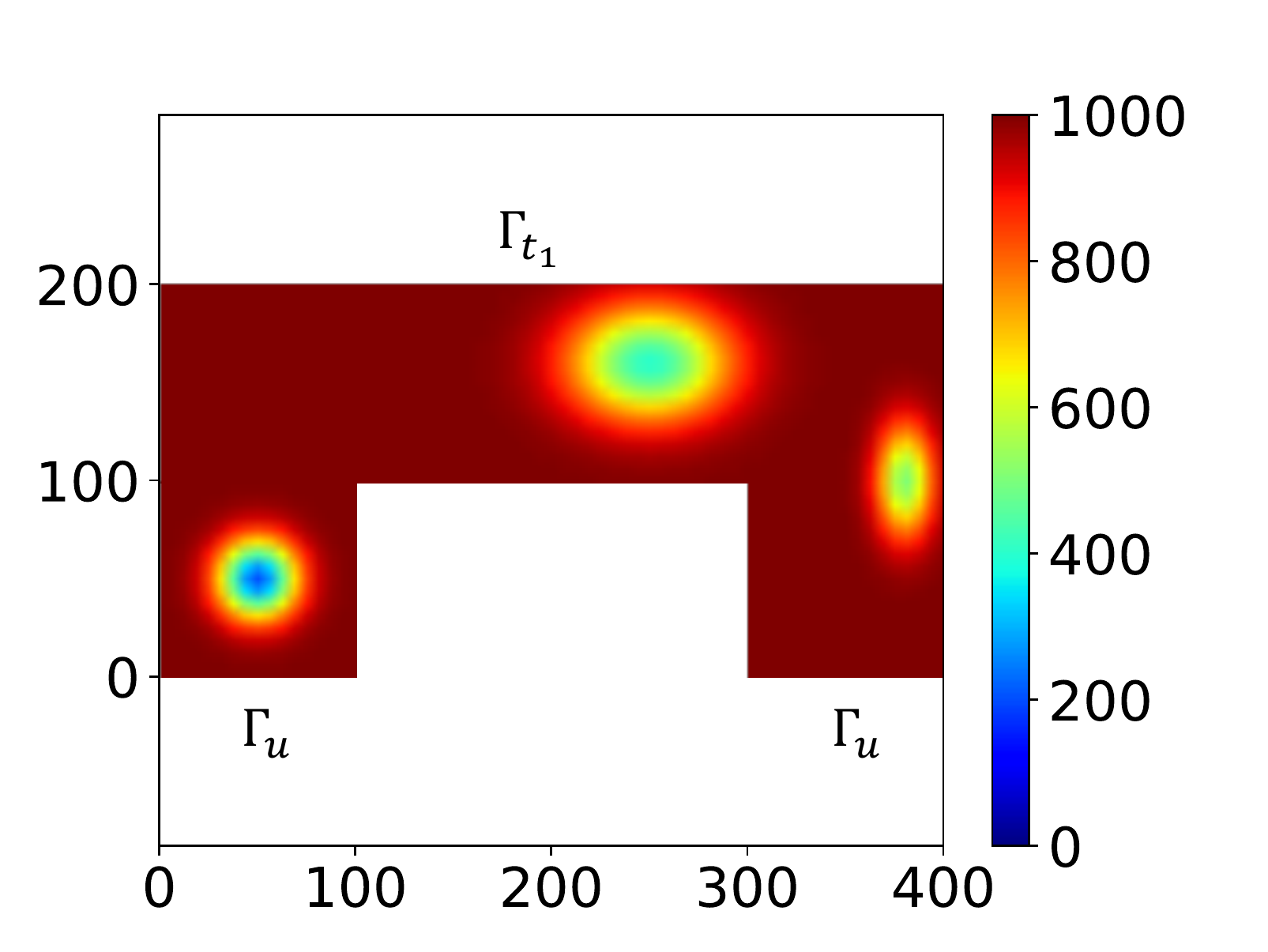}
\includegraphics[width=0.45\textwidth]{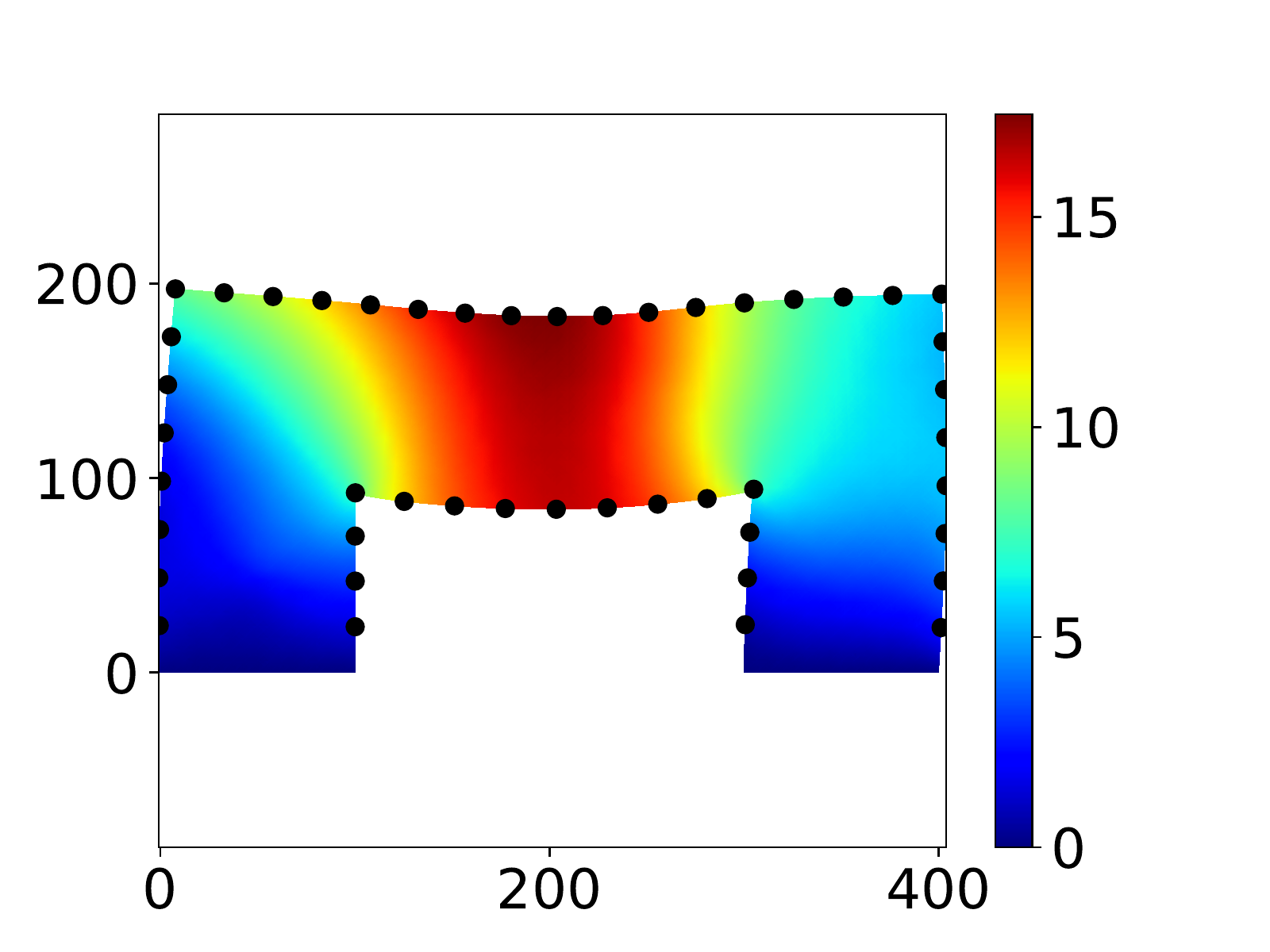}
\caption{The damaged Young's modulus~(left) and the displacement magnitude field~(right) with $46$ measurement locations on the surface of the boundaries~(black dots).  The five unlabelled edges comprise $\Gamma_{t_2}$; see equation \eqref{eq:pde}.}
\label{fig:Damage-obs}
\end{figure}

For the inverse problem, the damage field is parameterized in terms of field $\theta(x)$
as follows
\begin{equation*}
    \omega(\theta(x)) = 0.9 \frac{1 - e^{-\theta(x)}}{1 + 9e^{-\theta(x)}} \in (-0.1, 0.9).
\end{equation*}
Field $\theta(x)$ is itself discretized and represented by  
24 quadratic quadrilateral elements~($N_{\theta} = 125$)\footnote{It is worth mentioning that increasing the parameter dimensionality by refining the parameter mesh exacerbates the
ill-posedness and, therefore, deteriorates the performance of both Kalman inversions.}. 
The observations are $x_1$ and $x_2$ displacements measured at 46~($N_y=92$) locations on the surface boundaries~(see \cref{fig:Damage-obs}-right). We consider both $\alpha=0.5$ and
$\alpha=1.0$, and we set $r_0 = 0$ and $\gamma=1.$
The UKI and EKI are both applied, initialized with  $\theta_0 \sim \N(0, \I)$. The observation error model used in the algorithm is $\eta \sim \N(0, 0.1^2\I)$. For this problem the prior information  $\omega(\theta=0) = 0$ corresponds to an undamaged plate,
and is expected to be reasonable for most of the domain. For the EKI, the ensemble size is set to $J = 500$, which is larger than the 
number of $\sigma-$points used in UKI~($2N_\theta+1$). 

The convergence of the damage field $\omega(\theta(x, m_n))$ and the optimization errors
at each iteration are depicted in \cref{fig:Damage-converge}; the organization of the
information is the same as in the Darcy flow example.
In the noiseless scenario, the EKI exhibits divergence without regularization~($\alpha=1.0$) due to the ill-posedness, however, the UKI converges 
\footnote{We will see the same phenomenon in Subsection~\ref{sec:app:NS}.}.
For noisy scenarios, the effect of overfitting is significant. 
At $1\%$ noise level, setting $\alpha=0.5$ eliminates overfitting; however at $5\%$ noise level, setting $\alpha=0.5$ does not eliminate overfitting. Therefore, the results obtained with $\alpha=0.0$ are also reported for the  $5\%$ noise scenario. The estimated damaged Young's modulus fields $E(x, \theta)$ and the truth are depicted in \cref{fig:Damage}. Both Kalman inversion methods perform comparably, and these three flaw areas are captured;
however at $5\%$ noise level noticeable bias is visible in the flaws to the left and
right of the domain. As in the Darcy flow case, the convergence histories of the UKI
are smoother than for the EKI.

\begin{figure}[ht]
\centering
    \includegraphics[width=0.32\textwidth]{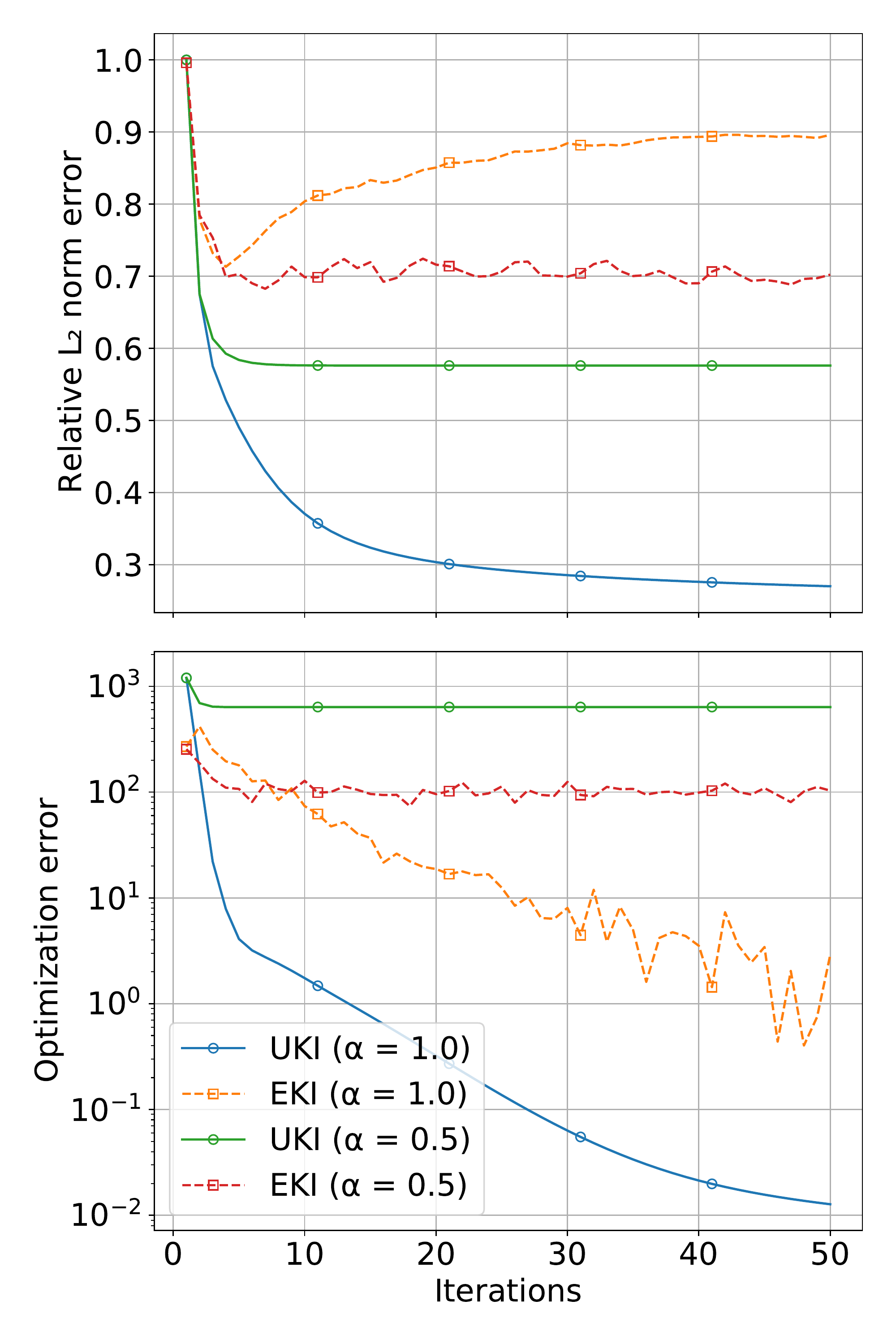}
    \includegraphics[width=0.32\textwidth]{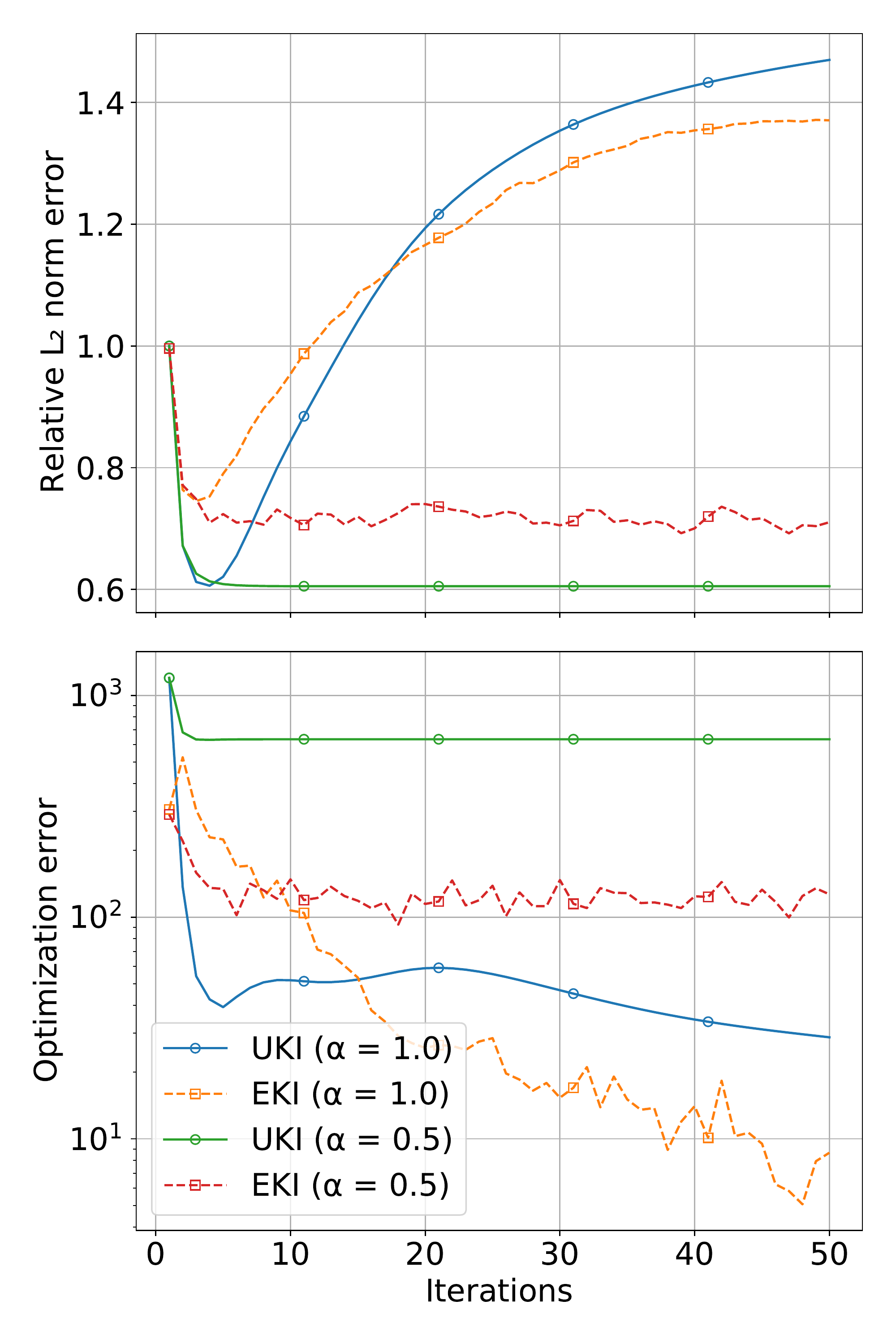}
    \includegraphics[width=0.32\textwidth]{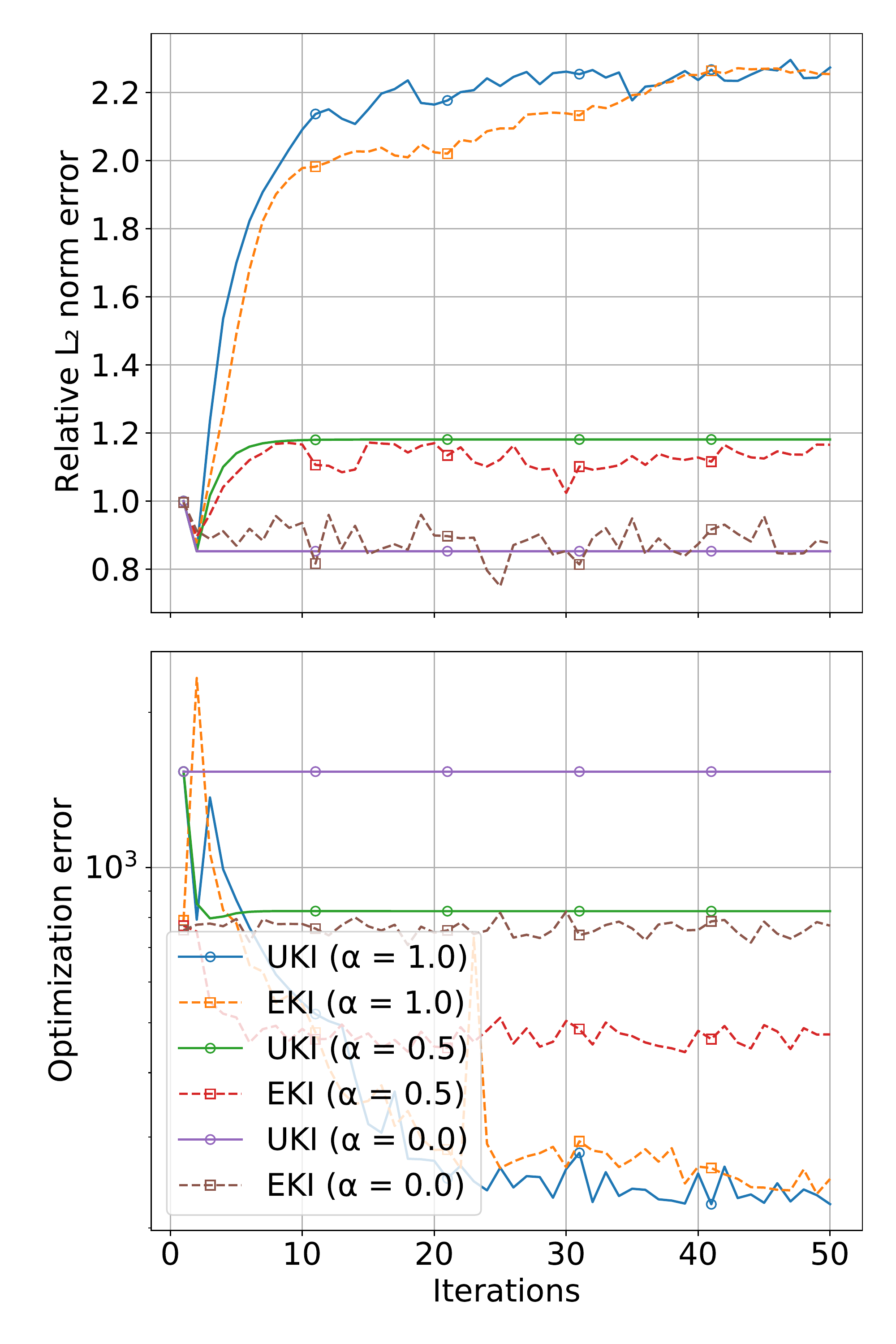}
    \caption{
    Relative error $\displaystyle \frac{\lVert\omega(\theta(x, m_n)) - \omega_{ref}\rVert_2}{\lVert\omega_{ref}\rVert_2}$ (top) and the optimization error $\displaystyle \frac{1}{2}\lVert\Sigma_{\eta}^{-\frac{1}{2}} (y_{obs} - \py_n)\rVert^2$~(bottom) of the damage detection problem with different noise levels: noiseless~(left), $1\%$ error~(middle), and~$5\%$ error (right).}
    \label{fig:Damage-converge}
\end{figure}

\begin{figure}[ht]
\centering
    \includegraphics[width=0.96\textwidth]{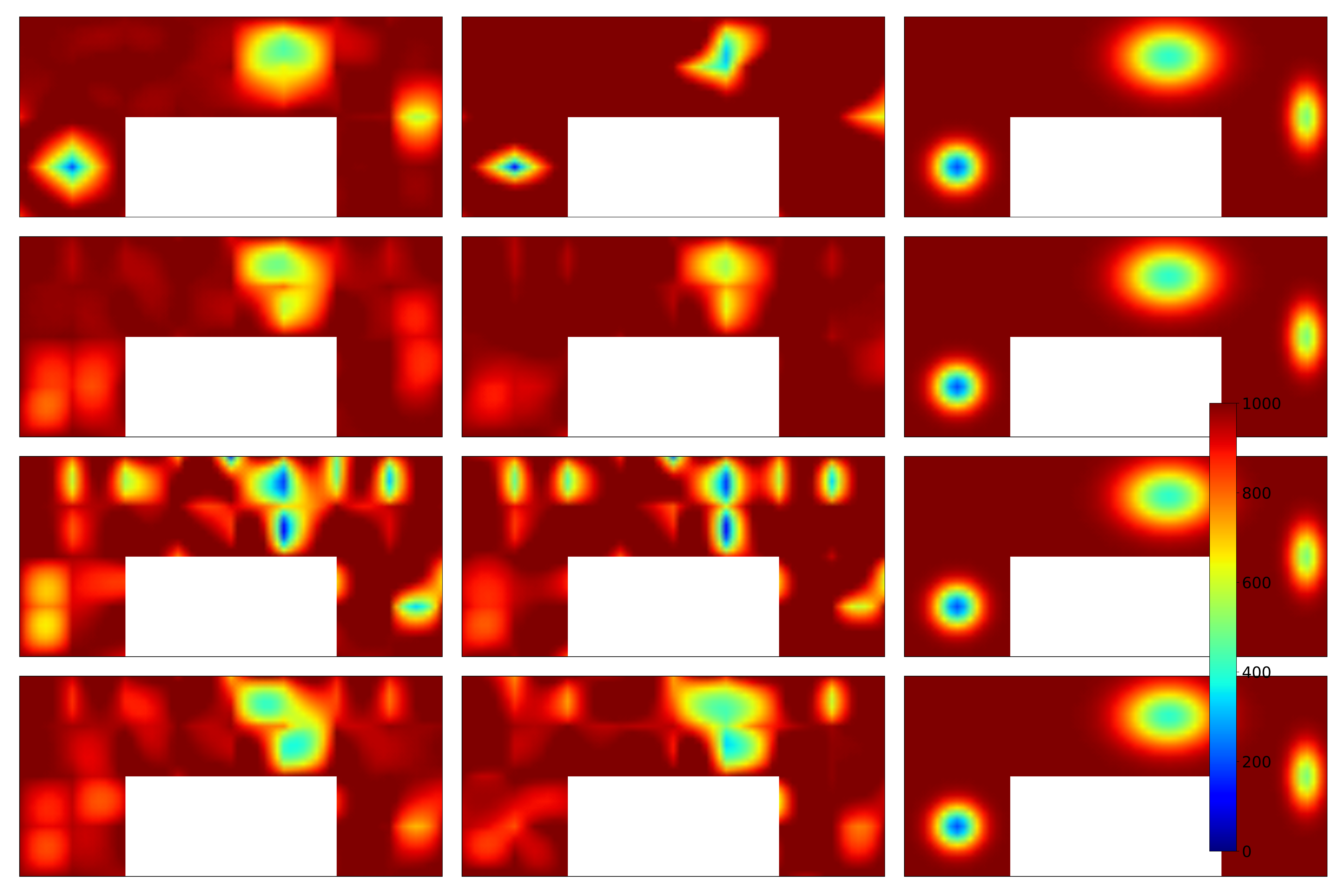}
    \caption{
    Damaged Young's modulus fields $(1 - \omega(x, \mean_n))E_0$ obtained by UKI, EKI, and the truth~(left to right) at different noise levels: noiseless~$\alpha=1$, $1\%$ noise~$\alpha=0.5$, $5\%$ noise~$\alpha=0.5$, and $5\%$ noise~$\alpha=0$~(top to bottom).}
    \label{fig:Damage}
\end{figure}

\subsection{Navier-Stokes Problem}
\label{sec:app:NS}

We consider the 2D Navier-Stokes equation on a periodic domain $D = [0,2\pi]\times[0,2\pi]$:
\begin{equation*}
\begin{split}
    &\frac{\partial v}{\partial t} + (v\cdot \nabla) v + \nabla p - \nu\Delta v = 0, \\
    &\nabla \cdot v = 0, \\
\end{split}
\end{equation*}
with initial condition chosen to imply the conservation law
$$\frac{1}{4\pi^2}\int v  = v_b.$$
Here $v$ and $p$ denote the velocity vector and the pressure, $\nu=0.01$ denotes the dynamic viscosity, and $v_b = (2\pi, 2\pi)$ denotes the non-zero mean background velocity.
The forward problem is rewritten in the vorticity-streamfunction~($\omega-\psi$) formulation:
\begin{equation*}
\begin{split}
    &\frac{\partial \omega}{\partial t} + (v\cdot\nabla)\omega - \nu\Delta\omega = 0, \\
    &\omega = -\Delta\psi \qquad \frac{1}{4\pi^2}\int\psi = 0,\\
    &v = \Big(\frac{\partial \psi}{\partial x_2}, -\frac{\partial \psi}{\partial x_1}\Big) + v_b,
\end{split}
\end{equation*}
and solved by the pseudo-spectral method~\cite{hesthaven2007spectral} on a $128\times128$ grid. To eliminate aliasing error, the Orszag 2/3-Rule~\cite{orszag1972numerical} is applied and, therefore there are $85^2$ Fourier modes (padding with zeros). Time-integration is 
performed using the Crank–Nicolson method with $\Delta T=2.5\times 10^{-4}$. 

We study the problem of recovering the initial vorticity field from measurements
at positive times. We parameterize this field as $\omega_0(x, \theta)$, defined by parameters $\theta \in \R^{N_{\theta}}$, and modeled {\em a priori} as a Gaussian field with covariance operator $\mathsf{C} = \Delta^{-2}$, subject to periodic boundary conditions, on the space of spatial-mean zero functions. The KL expansion of the initial vorticity field is given by 
\begin{equation}
\label{eq:NS-KL-2d}
\omega_0(x, \theta) = \sum_{l\in K} \theta^{c}_{(l)} \sqrt{\lambda_{l}} \psi^c_l  +  \theta^{s}_{(l)}\sqrt{\lambda_{l}} \psi^s_l,
\end{equation}
where $K = \{(k_x, k_y)| k_x + k_y > 0 \textrm{ or } (k_x + k_y = 0 \textrm{ and } k_x > 0)\}$, and the eigenpairs are of the form
\begin{equation*}
    \psi^c_l(x) =\frac{\cos(l\cdot x)}{\sqrt{2}\pi}\quad \psi^s_l(x) =\frac{\sin(l\cdot x)}{\sqrt{2}\pi} \quad \lambda_l = \frac{1}{|l|^{4}},
\end{equation*}
and $\theta^{c}_{(l)},\theta^{s}_{(l)}  \sim \N(0,2\pi^2)$ i.i.d. The KL expansion~\cref{eq:NS-KL-2d} can be rewritten as a sum over $\Z^{0+}$ rather than a lattice: 
\begin{equation}
\label{eq:NS-KL-1d}
    \omega_0(x,\theta) = \sum_{k\in \Z^{0+}} \theta_{(k)}\sqrt{\lambda_k} \psi_k(x),
\end{equation}
where the eigenvalues $\lambda_k$ are in descending order.

For the inverse problem, we recover the initial condition, specifically the initial vorticity field of the Navier-Stokes equation, given pointwise observations $y_{ref}$ of the vorticity field at 16 equidistant points~($N_y=32$) at $T=0.25$ and $T=0.5$~(See \cref{fig:NS-obs}).
The observations $y_{obs}$ are defined as in \eqref{eq:add-noise}.
The initial vorticity field $\omega_{0,ref}$ is generated with all $85^2$ Fourier modes, and the first $N_{\theta}=100$ KL modes of~\cref{eq:NS-KL-1d} are recovered. We
take $\alpha=1.0$ and $\alpha=0.9$, and fix $r_0 = 0$ and $\gamma=10$.
Both UKI and EKI are applied  with  $\theta_0 \sim \N(0, 10\I)$ and the observation error assumed for inversion purposes is $\eta \sim \N(0, \I)$. For the EKI, the ensemble size is set to be $J = 201$, which equals the number of $\sigma-$points in  UKI~($2N_\theta+1$).

\begin{figure}[ht]
\centering
\includegraphics[width=0.45\textwidth]{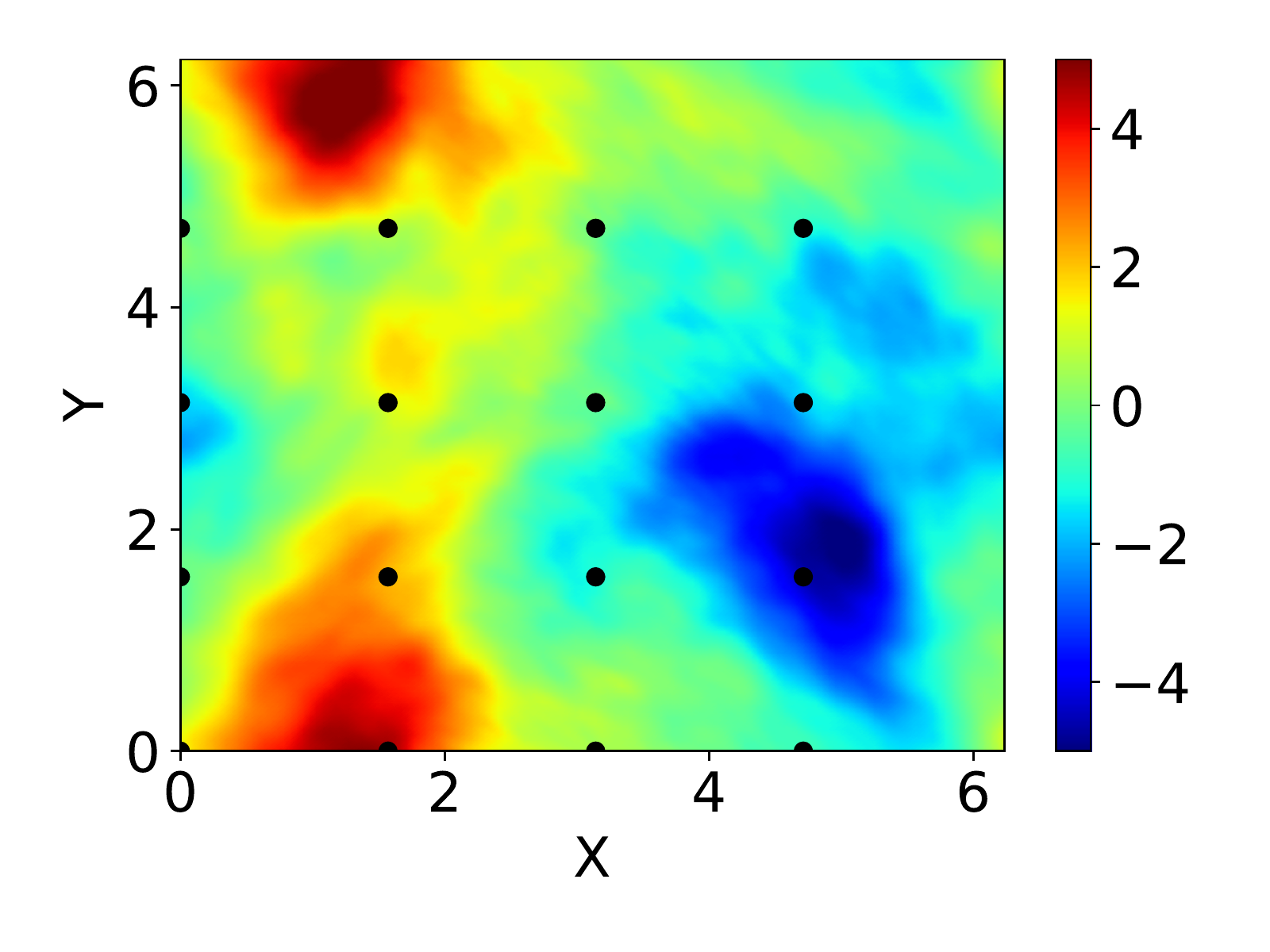}
\includegraphics[width=0.45\textwidth]{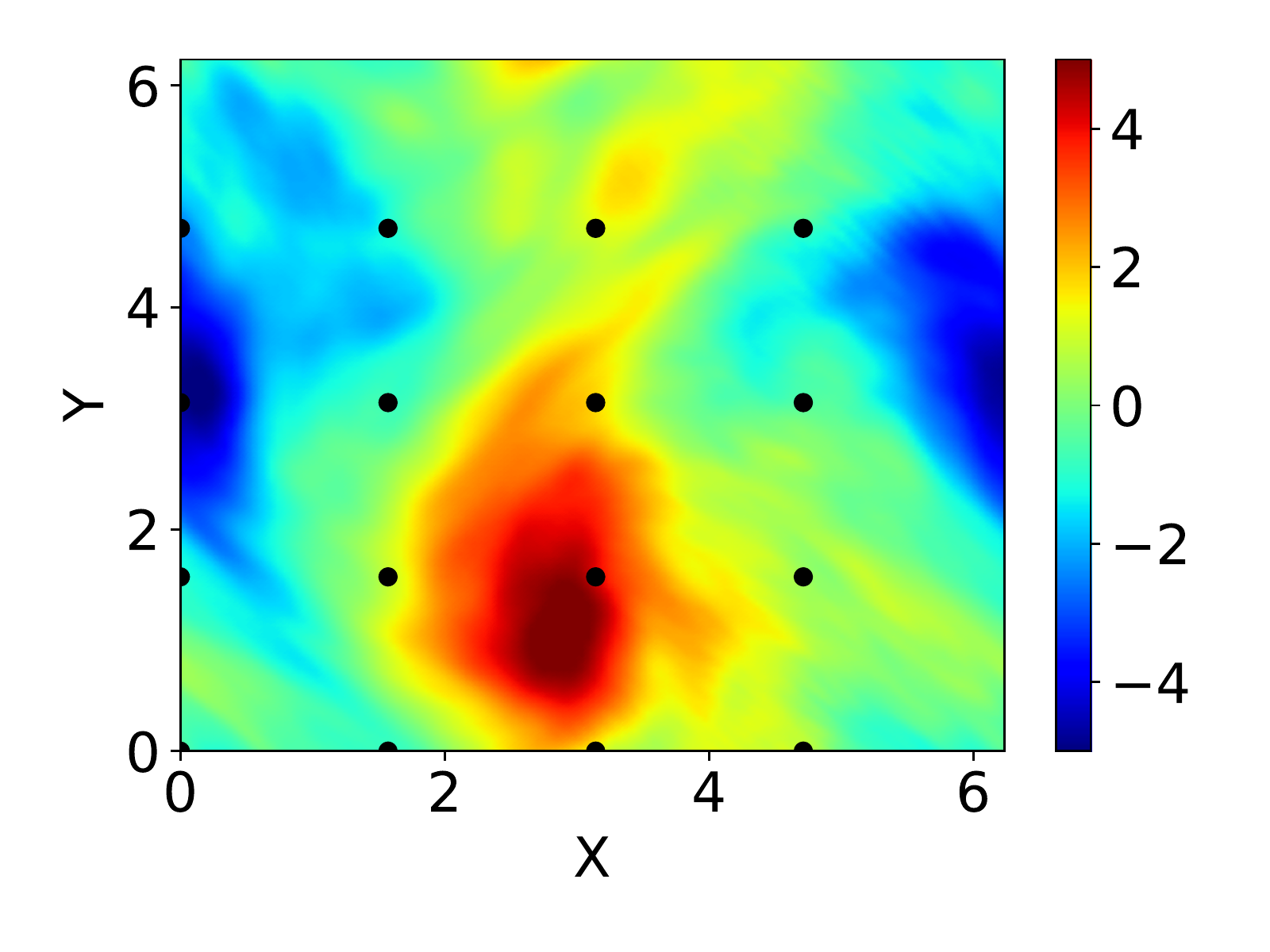}
\caption{The vorticity fields of the Navier-Stokes problem and the $16$ equidistant pointwise measurements~(black dots) at two observation times~($T=0.25$ and $T=0.5$).}
\label{fig:NS-obs}
\end{figure}

The convergence of the initial vorticity field $\omega_0(x, \mean_n)$ and the optimization errors
for different noise levels at each iteration are depicted in \cref{fig:NS-converge}; the
organization of the figure is the same as in the Darcy case.
In all scenarios, the UKI outperforms EKI. Moreover, without regularization~($\alpha=1.0$), EKI exhibits slight divergence. 
This inverse problem is not sensitive to added Gaussian random noise, and the behavior of any given Kalman inversion, with respect to different noise levels, are almost indistinguishable.
The estimated initial vorticity fields $\omega_0(x,m_n)$ at the 50th iteration for different noise levels obtained by the Kalman inversions and the truth random field are depicted in \cref{fig:NS}. Both Kalman inversions capture main features of the truth random initial field, but not the detailed small features, due to the irreversibility of the diffusion process~($\nu=0.01$).

\begin{figure}[ht]
\centering
    \includegraphics[width=0.32\textwidth]{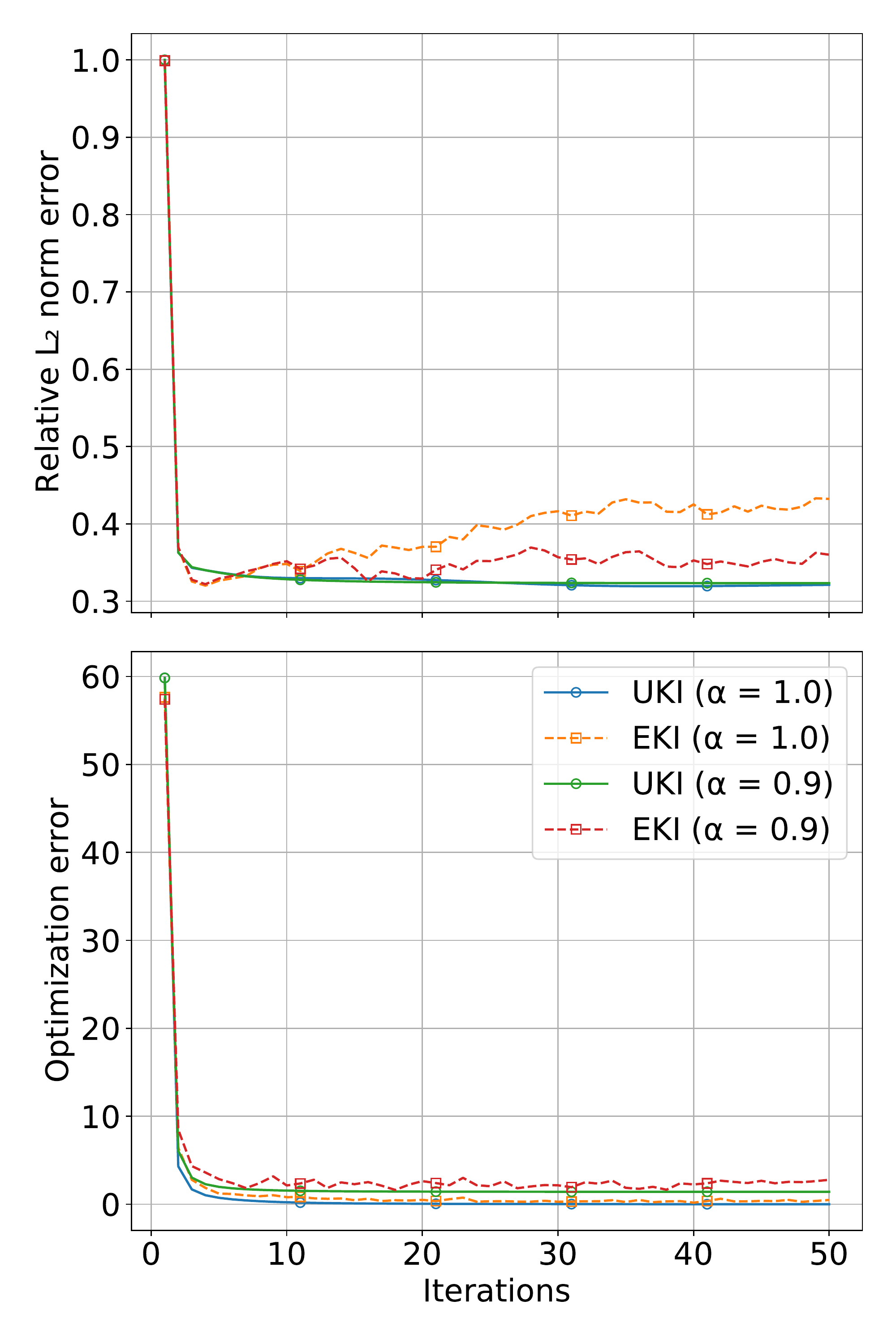}
    \includegraphics[width=0.32\textwidth]{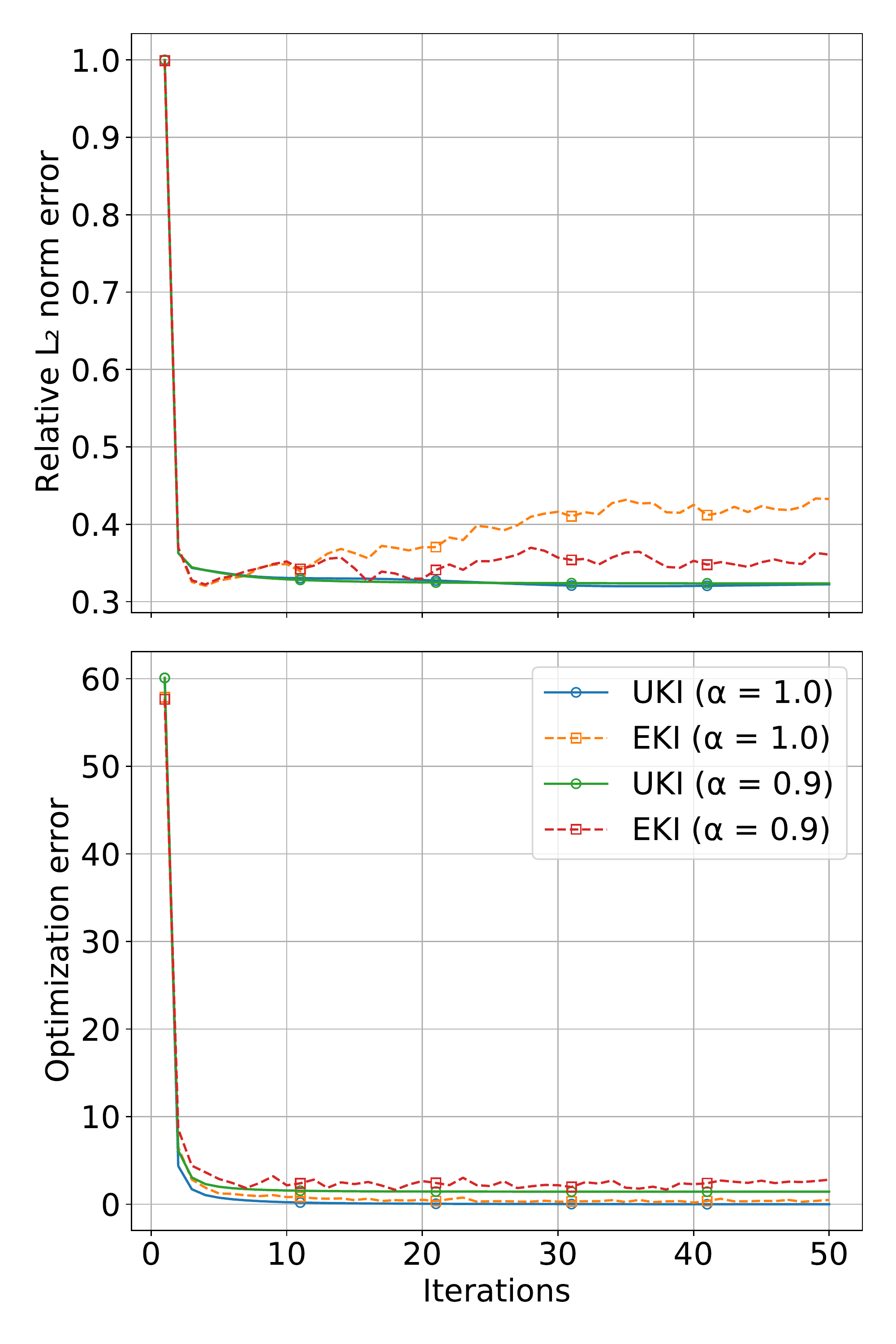}
    \includegraphics[width=0.32\textwidth]{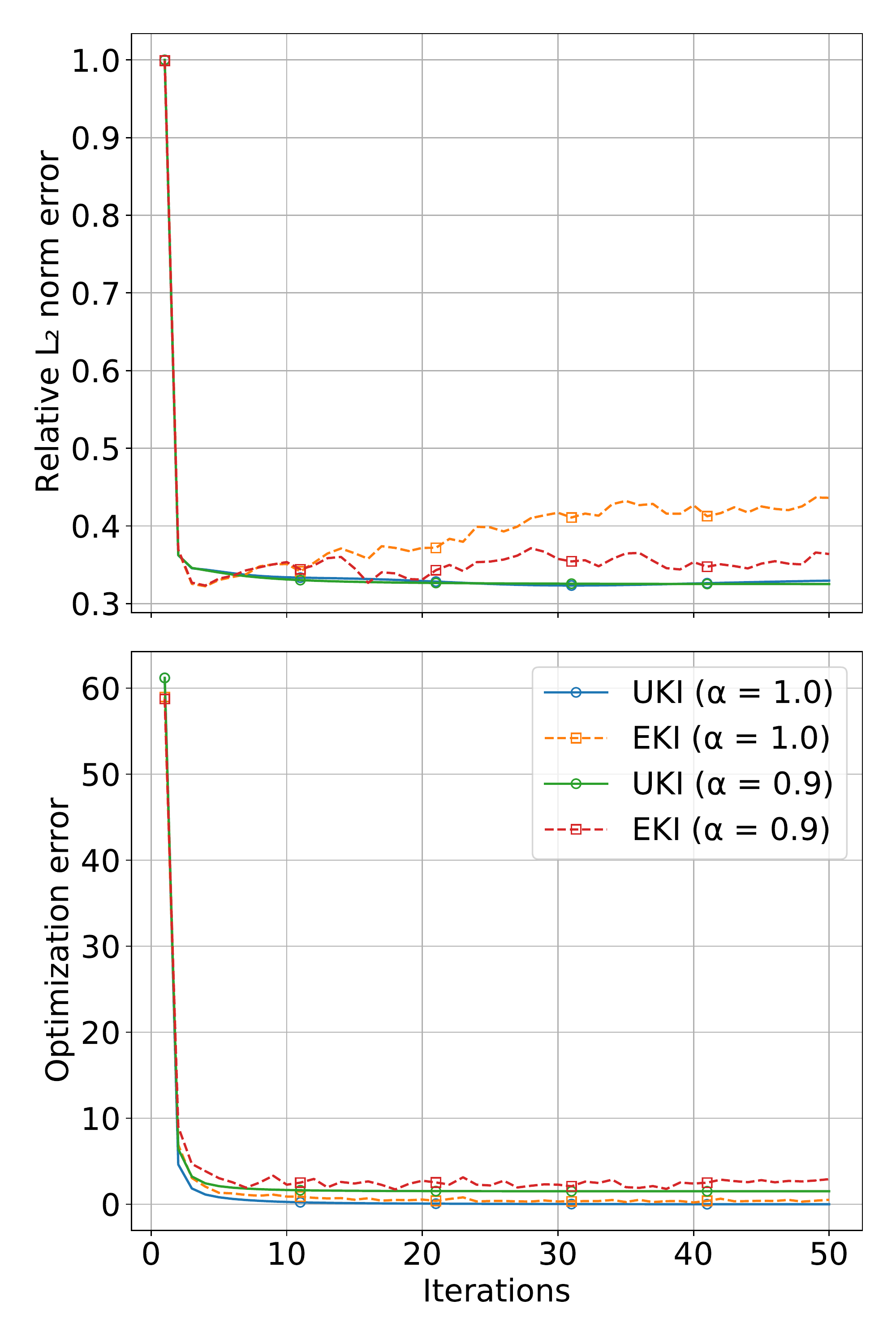}
    \caption{
    Relative error $\frac{\lVert\omega_0(x,\mean_n) - \omega_{0,ref}\rVert_2}{\lVert \omega_{0,ref}\rVert_2}$ (top) and the optimization error $\displaystyle \frac{1}{2}\lVert\Sigma_{\eta}^{-\frac{1}{2}} (y_{obs} - \py_n)\rVert^2$~(bottom) of the Navier-Stokes problem with different noise levels: noiseless~(left), $1\%$ error~(middle), and $5\%$ error~(right).}
    \label{fig:NS-converge}
\end{figure}

\begin{figure}[ht]
\centering
    \includegraphics[width=0.72\textwidth]{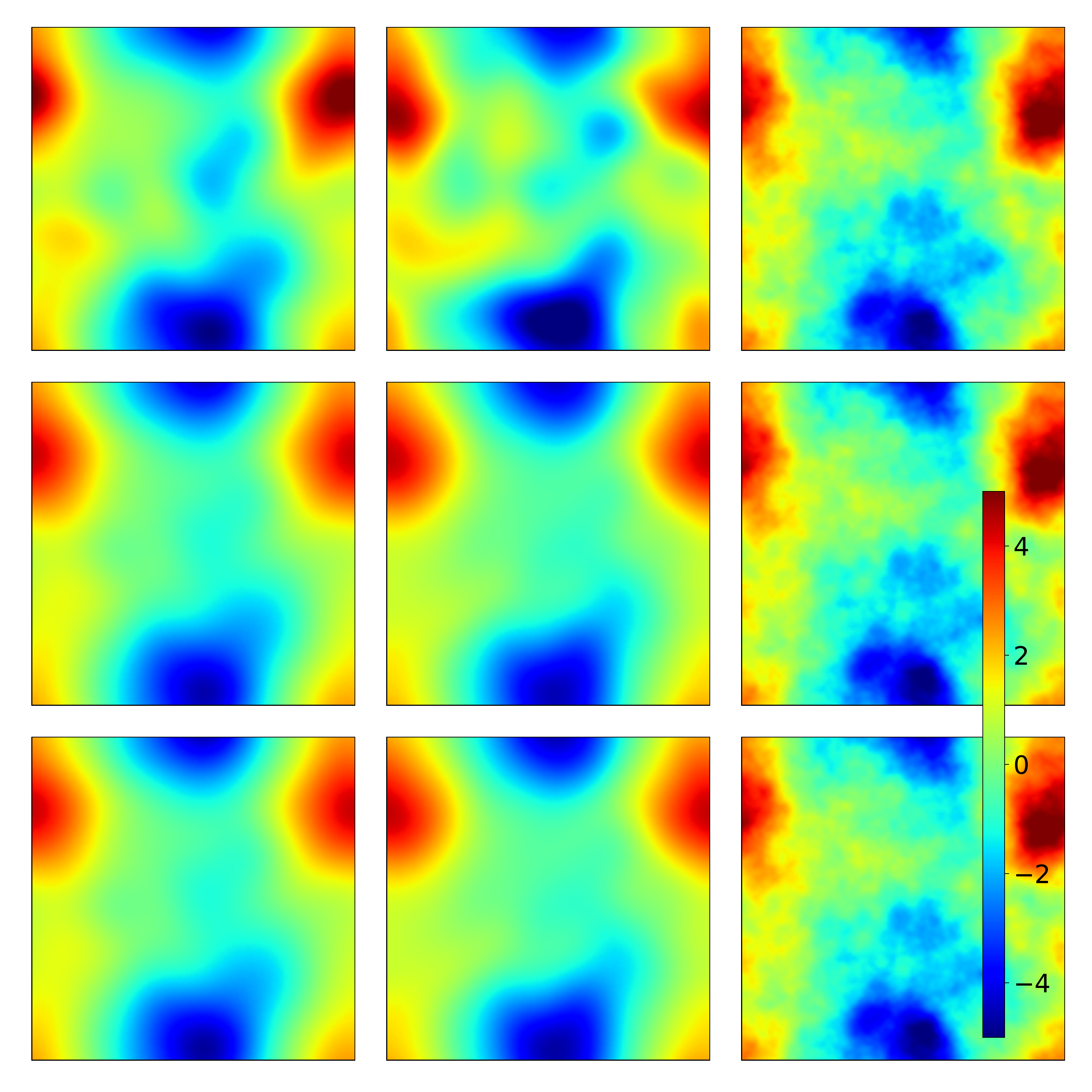}
    \caption{
    Initial vorticity fields $\omega_0(x, \mean_n)$ recovered by UKI, EKI, and the truth~(left to right) for different noise levels: noiseless~$\alpha=1$, $1\%$ noise~$\alpha=0.9$, $5\%$ noise~$\alpha=0.9$~(top to bottom).}
    \label{fig:NS}
\end{figure}

\subsection{Lorenz63 Model Problem}
\label{sec:app:Lorenz63}
Consider the Lorenz63 system, a simplified mathematical model for atmospheric convection~\cite{lorenz2004deterministic}:
\begin{equation*}
\begin{split}
    &\frac{dx_1}{dt} = \sigma (x_2 - x_1), \\
    &\frac{dx_2}{dt} = x_1(r-x_3)-x_2,        \\
    &\frac{dx_3}{dt} = x_1x_2-\beta x_3;   \\
\end{split}
\end{equation*}
the system is parameterized by $\sigma, r, \beta \in \R_{+}$.
We consider learning various subsets of these parameters from time-averaged data.
To be concrete, the observation consists of the time-average of the various moments over time windows of size $T = 20$, with an initial spin-up period $T=30$ to eliminate the influence of the initial condition; if $f:\R^3 \mapsto \R$ computes a moment, then we define
\begin{equation}
\label{eq:timea}
\displaystyle \overline{f(x)} = \frac{1}{20}\int_{30}^{50} f\bigl(x(t)\bigr) dt.
\end{equation}
We view this as an approximation of the ergodic average
$$\E f(x)= \lim_{\tau \to \infty} \frac{1}{\tau}\int_{0}^{\tau} f\bigl(x(t)\bigr) dt.$$
If the observation operator comprises finite time averages of the form \eqref{eq:timea} for a collection of moments $f(x)$ then we may reformulate the inverse problem as
\begin{equation}
\label{eq:Lorenz-gen}
    y = \G(r) + \eta 
\end{equation}
with $\eta$ a Gaussian which may be estimated from a long time trajectory
(we use $T=200$) by  appealing to the central limit theorem
\cite{bahsoun2020variance}. In this interpretation $\G$ is the ergodic average. Note, however,
that when we run any algorithm we will only use finite-time average approximations of
$\G$.

The truth observation is computed with parameters $(\sigma,\ r,\ \beta) = (10,\ 28,\ 8/3)$ over a time window of size $T= 200$, also with an initial spin-up period $T=30$. 
To estimate the statistics of $\eta$ we split the observation time-series into 
$10$ windows of size $T=20$ and compute covariance of the observation error $\eta$ following~\cite{cleary2020calibrate}. We set $r_0 = 5.0\mathds{1}$ and $\gamma=1$.
The UKI is initialized with $\theta_0 \sim \N(5.0\mathds{1},\I)$, and $\alpha$ is set to $1.$

We start with the following one-parameter inverse problem with fixed $\sigma=10$ and $\beta=8/3$:
\begin{equation}
\label{eq:Lorenz-1}
    y = \G(r) + \eta \quad \mathrm{with} \quad y= \overline{x_3}.
\end{equation}
The UKI is applied, and the estimated $r$ and the associated  3-$\sigma$ confidence intervals at each iteration are depicted in \cref{fig:Loroze63-1para}. The confidence intervals
give an indication of the evolving covariance $C_n$. The estimation of $r$ at the 20th iteration is $r\sim \N(28.03, 0.22)$.

\begin{figure}[ht]
\centering
\includegraphics[width=0.6\textwidth]{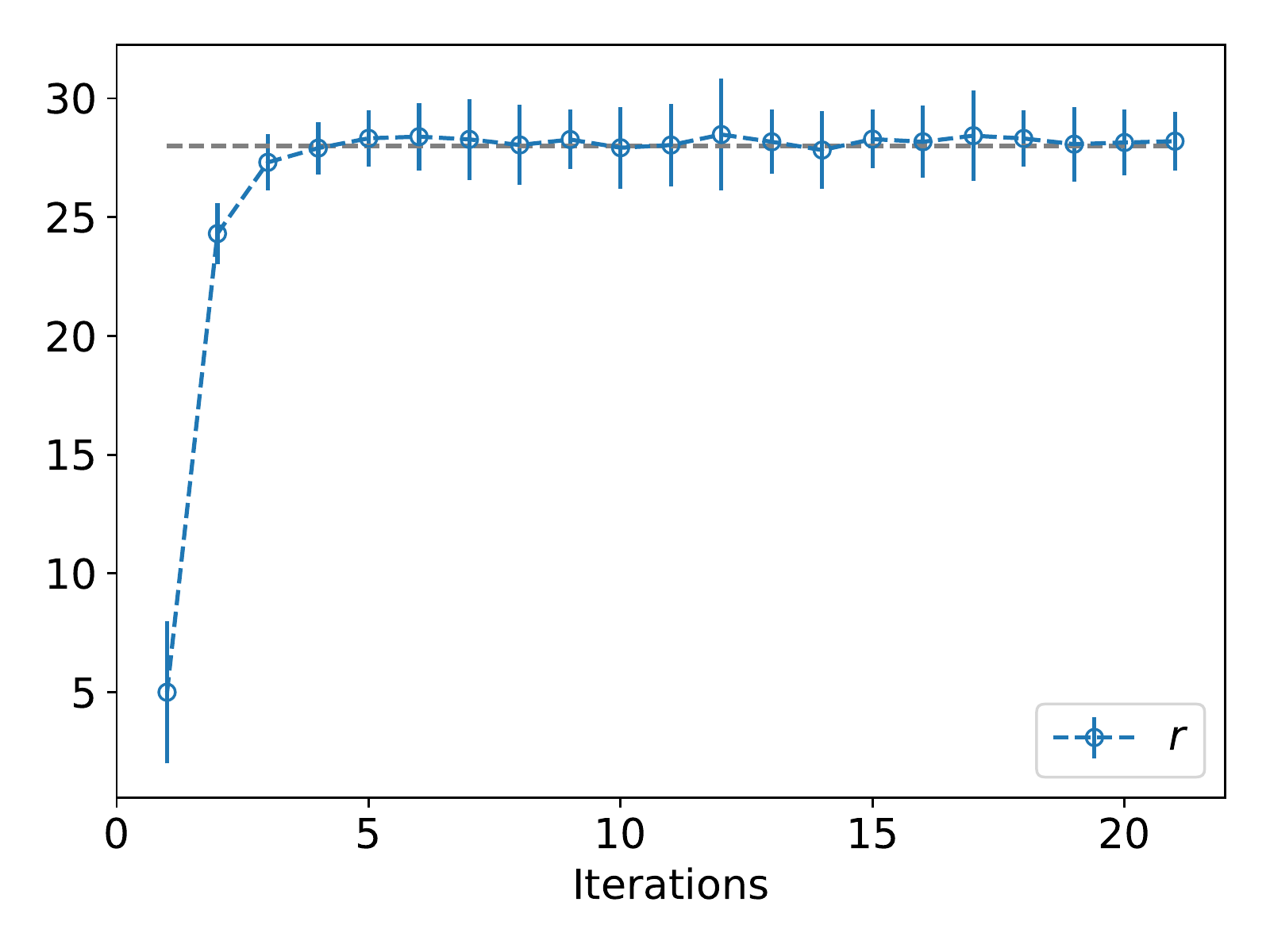}
\caption{Convergence of the 1-parameter Lorenz63 inverse problem with UKI~($\alpha=1.0$); the true parameter value is represented by the dashed grey line.}
\label{fig:Loroze63-1para}
\end{figure}

The landscape of $\G$ and sensitivity of $\G(\cdot)$ with respect to the input for
observations, derived from chaotic problems such as \cref{eq:Lorenz-1}, are widely studied~\cite{lea2000sensitivity, wang2014least}. 
We study them further, here, and the results are depicted 
in \cref{fig:Loroze63-func1}. The function $\G$ is characterized by a sudden change at $r \approx 22$ and the landscape is highly oscillatory for $r>22$; furthermore,
the sensitivity $d\G(r)$ computed with the discrete adjoint method blows up: 
\begin{equation*}
    |d\G(r)| \propto \bigO(e^{\lambda T}),
\end{equation*}
with the value of the exponent $\lambda$ consistent with the first global Lyapunov exponent~\cite{lea2000sensitivity,froyland1984lyapunov}.
This illustrates the challenges inherent in parameter estimation and sensitivity analyses
for chaotic systems. In particular, the ExKI method suffers from the large derivatives
of $\G$.
Based on~\cref{th:filter}, it is natural to study
the landscape of the averaged function $\F\G$ and its associated gradient $\F d\G$, with the standard deviation $\sigma_r = \sqrt{0.22}$ fixed; this
gives an indication of the landscape as perceived by the UKI. In particular, we have:
\begin{equation*}
\begin{split}
    \F\G(r) = \int \G(x) \frac{1}{\sqrt{2\pi}\sigma_r}e^{-\frac{(x -r)^2}{2\sigma_r^2}} dx, \qquad
    \F d\G(r) = \frac{\int (x-r)(\G(x) - \G(r)) \frac{1}{\sqrt{2\pi}\sigma_r}e^{-\frac{(x -r)^2}{2\sigma_r^2}} dx}{\int(x-r)^2 \frac{1}{\sqrt{2\pi}\sigma_r}e^{-\frac{(x -r)^2}{2\sigma_r^2}} dx}.
\end{split}
\end{equation*}
These functions are depicted in \cref{fig:Loroze63-smoothfunc1}, which should be
compared with \cref{fig:Loroze63-func1}. We see that $\F\G$ is smooth (except the transition point), and $\F d\G$ does not suffer from blow-up in the way $d\G$ does;
furthermore, $\F d\G$ represents the averaged gradient $ \overline{d\G(r)} \approx 0.96$ well,
away from the blow-up regions. This explains why the adjoint/gradient-based methods, including ExKI, fail, but the UKI succeeds for this chaotic inverse problem.

\begin{figure}[ht]
\centering
\includegraphics[width=0.49\textwidth]{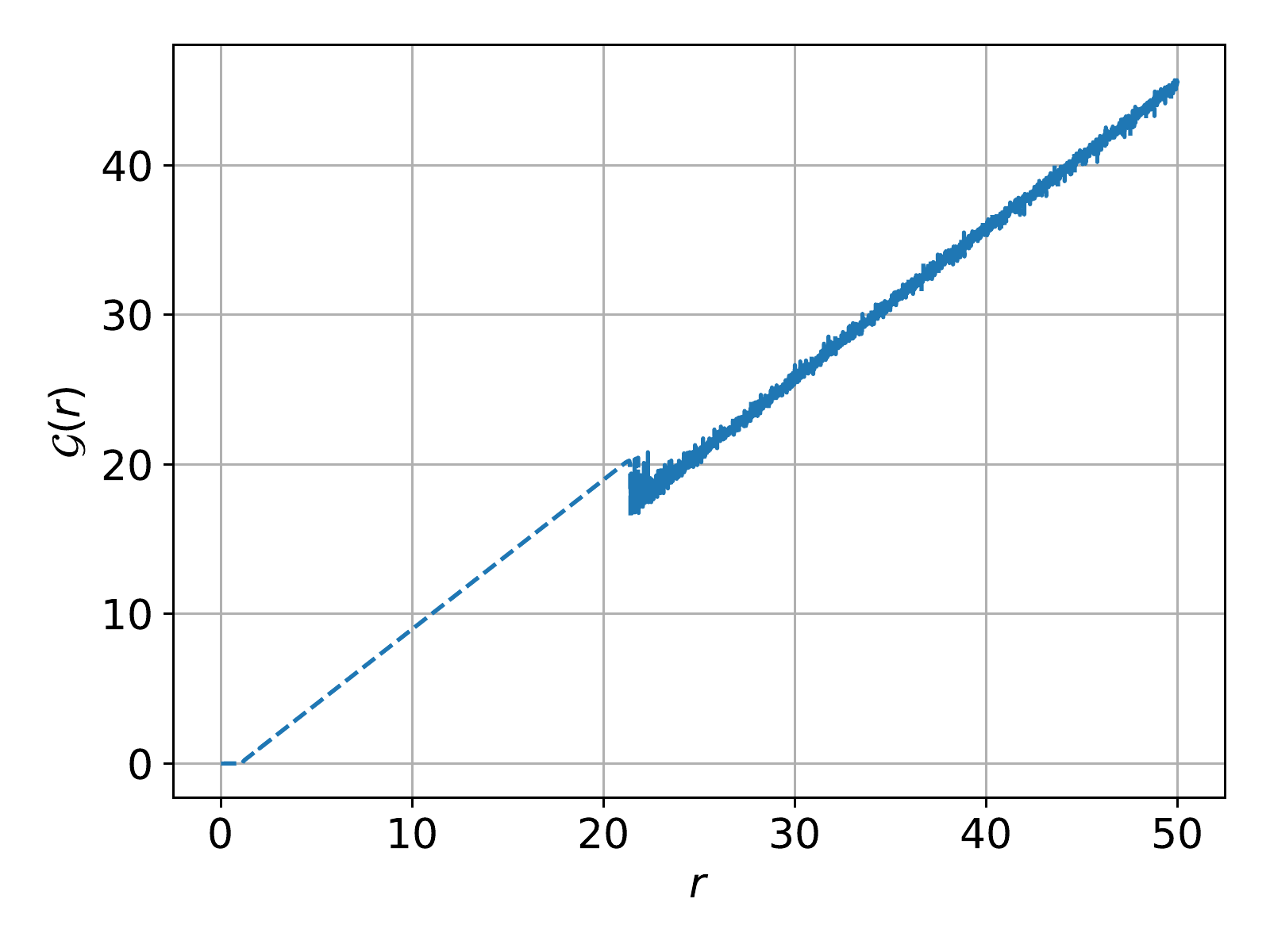}
\includegraphics[width=0.49\textwidth]{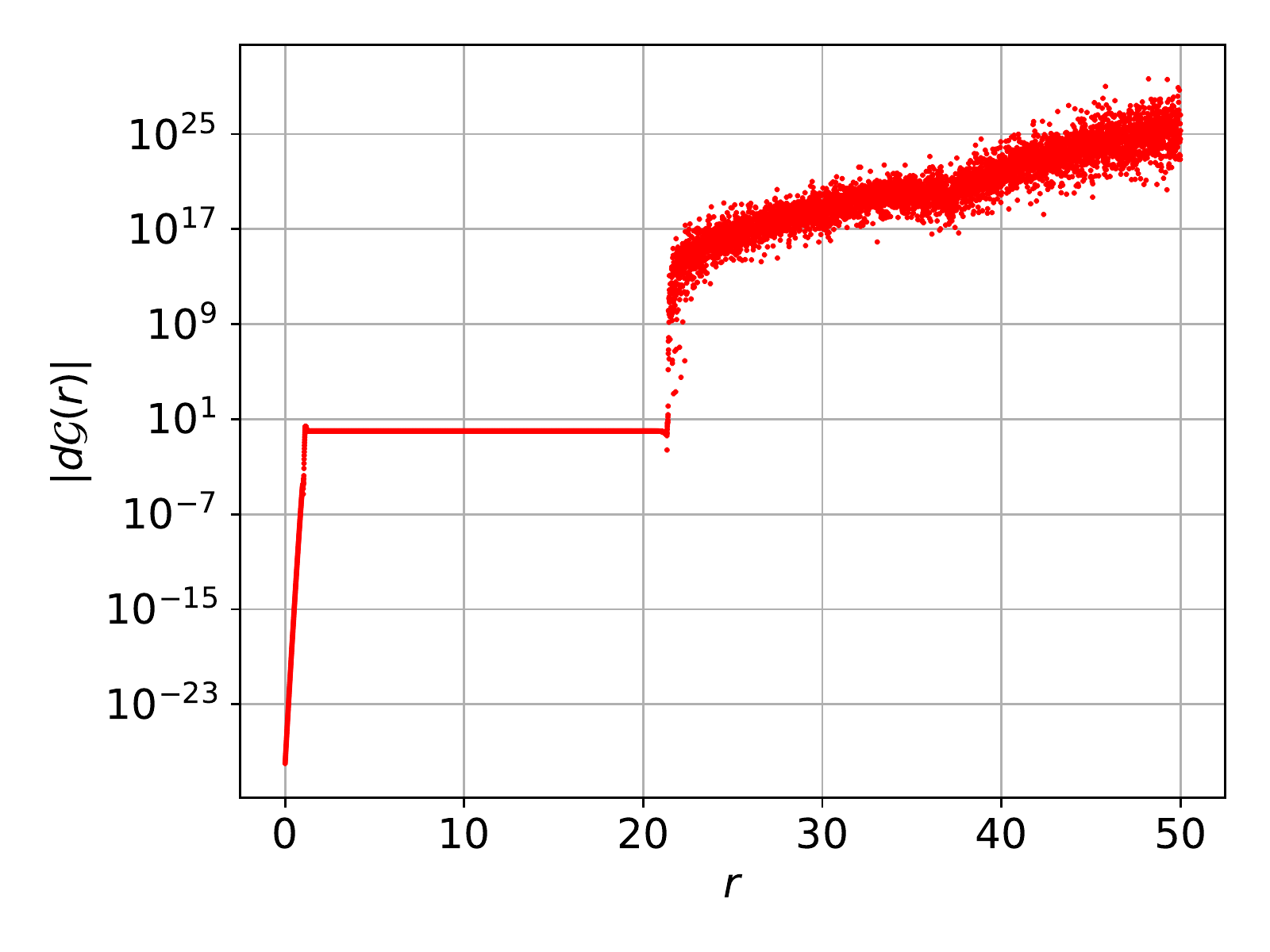}
\caption{Landscape~(left) and sensitivity~(right) of $\G$ in the 1-parameter Lorenz63 inverse problem~\cref{eq:Lorenz-1}}
\label{fig:Loroze63-func1}
\end{figure}

\begin{figure}[ht]
\centering
\includegraphics[width=0.49\textwidth]{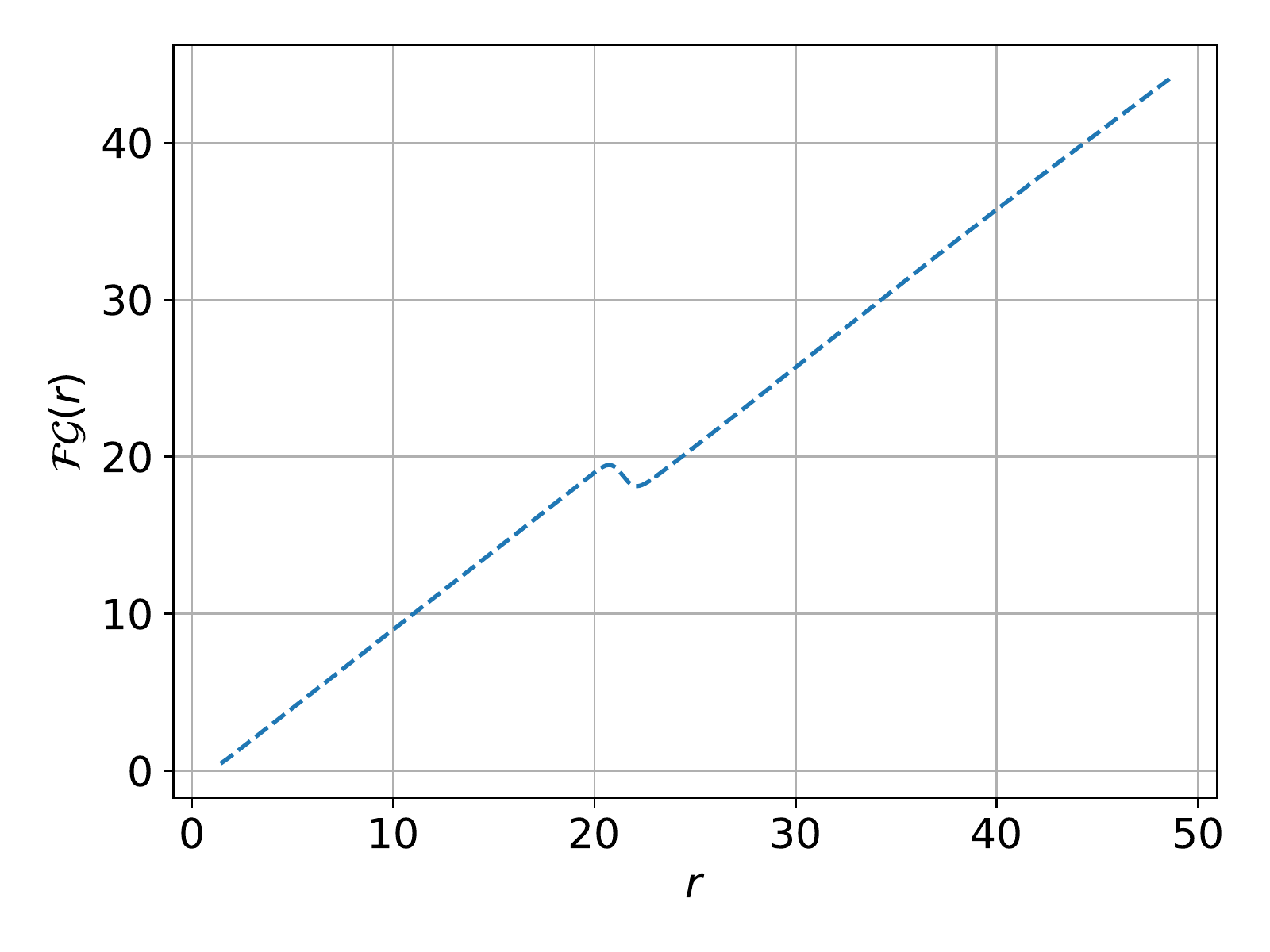}
\includegraphics[width=0.49\textwidth]{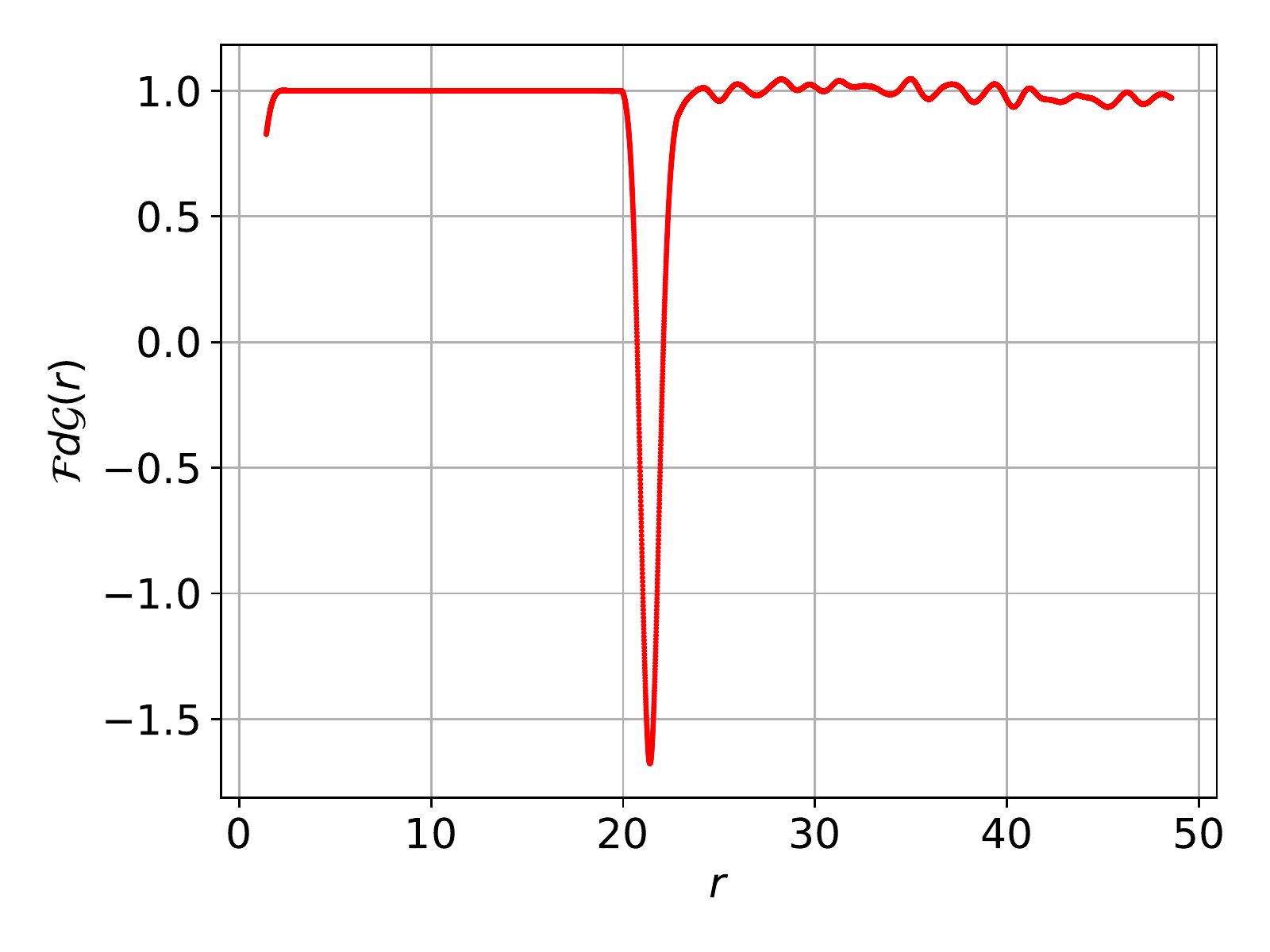}
\caption{Landscape~(left) and sensitivity~(right) of $\F\G$ in the 1-parameter Lorenz63 inverse problem~\cref{eq:Lorenz-1} smoothed and viewed by UKI.}
\label{fig:Loroze63-smoothfunc1}
\end{figure}

Next, we consider a three-parameter inverse problem, using the ideas in  
Subsection \ref{ssec:constraints}. Let $\theta=(\theta_{(1)},\theta_{(2)},\theta_{(3)})$
and let $(\sigma,r,\beta) = (|\theta_{(1)}|,|\theta_{(2)}|,|\theta_{(3)}|).$ The map
$\G(\theta)$ is found by computing time-averages of all three components of $x$,
as described above, for given input parameter $\theta.$ The use of the modulus
helps ensure solution trajectories which do not blow-up. We have
\begin{equation}
\label{eq:Lorenz-3}
    y = \G(\theta)+\eta \quad \mathrm{with} \quad y= (\overline{x_1}, \overline{x_2},\overline{x_3}, \overline{x_1^2}, \overline{x_2^2}, \overline{x_3^2}).
\end{equation}
All other aspects of the setup are the same as the aforementioned one-parameter inverse problem. The estimated parameters and associated 3-$\sigma$ confidence intervals for each component at each iteration are depicted in \cref{fig:Loroze63-3para}. The estimation of the parameters at the 20th iteration is 
\begin{equation*}
   \left(\sigma\quad r\quad \beta \right) = \left(10.28\quad27.90\quad2.63\right)
\end{equation*}
For both scenarios, the UKI converges efficiently, thanks to the linear~(or superlinear) convergence rate of the LMA and the averaging property.
\begin{figure}[ht]
\centering
\includegraphics[width=0.6\textwidth]{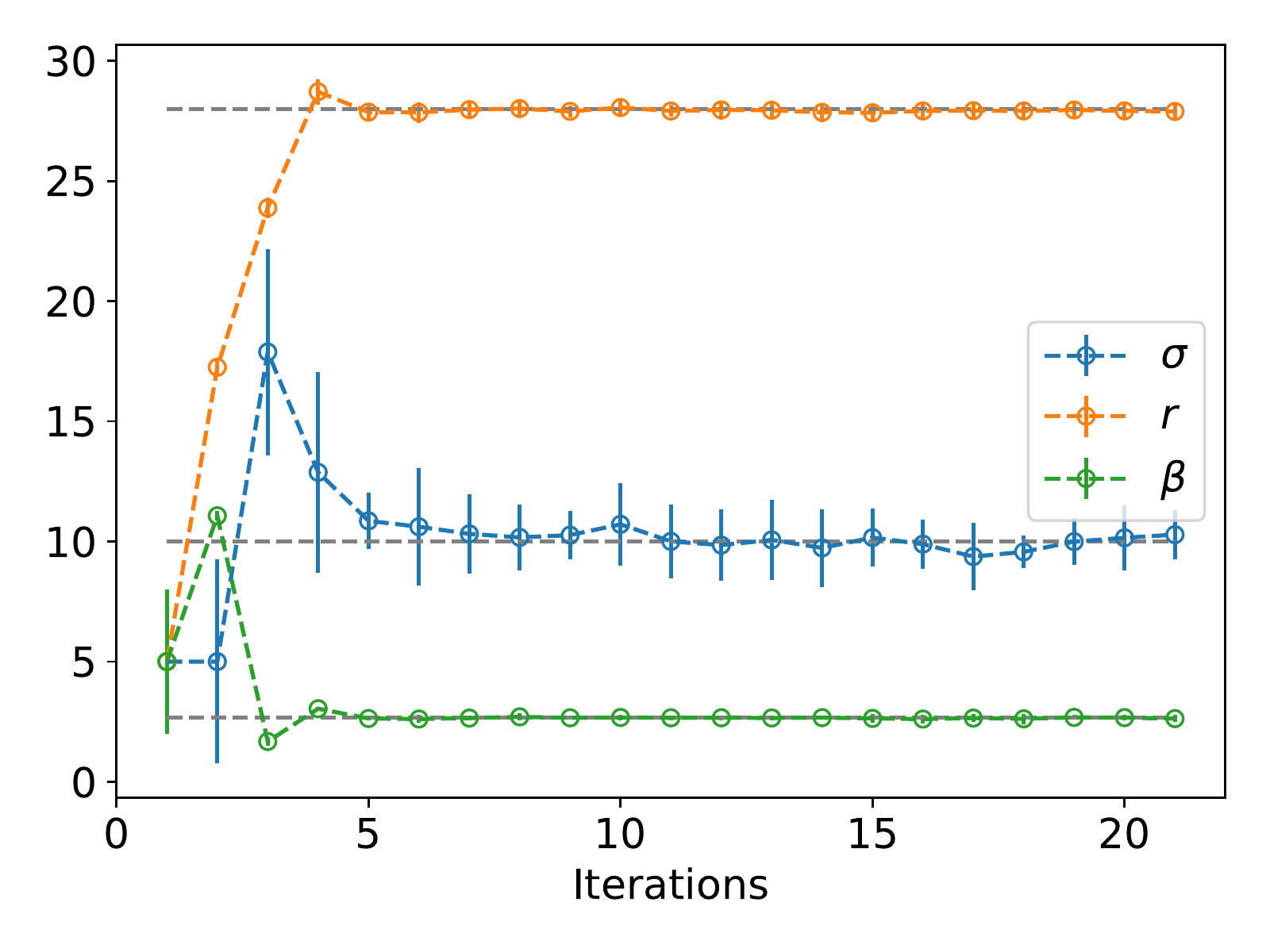}
\caption{Convergence of the 3-parameter Lorenz63 inverse problem with UKI~($\alpha=1.0$); true parameter values are represented by dashed grey lines.}
\label{fig:Loroze63-3para}
\end{figure}

\subsection{Multiscale Lorenz96 Problem}
\label{sec:app:Lorenz96}
Consider the multi-scale Lorenz96 system, a simplified mathematical model for the midlatitude atmosphere~\cite{lorenz1996predictability}, with $K$ slow variables $X^{(k)}$ which are each coupled with $J$ fast variables $Y^{(j,k)}$, given by:
\begin{equation}
\label{eq:Lorenz96}
\begin{split}
    &\frac{dX^{(k)}}{dt} = -X^{(k-1)}(X^{(k-2)} - X^{(k+1)}) - X^{(k)} + F - \frac{hc}{b}\sum_{j=1}^{J} Y^{(j,k)}, \\
    &\frac{dY^{(j,k)}}{dt} = -cbY^{(j+1,k)}(Y^{(j+2,k)} - Y^{(j-1,k)}) - cY^{(j,k)} +\frac{hc}{b}X^{k}.
\end{split}
\end{equation}
To close the system, it is appended with the cyclic boundary conditions $X^{(k+K)} = X^{(k)}$, $Y^{(j,k+K)} = Y^{(j,k)}$ and $Y^{(j+J,k)} = Y^{(j,k+1)}$.
The time scale separation is parameterized by the coefficient $c$ and the large-scales are subjected to external forcing $F$.
We choose here as parameters $K = 8$, $J = 32$, $F =20$, $c=b=10$ and $h=1$ as in~\cite{fatkullin2004computational,wilks2005effects,arnold2013stochastic,gottwald2020supervised}. As time-integrator, we use the 4th-order Runge Kutta method with $\Delta T = 5\times10^{-3}$.

Our goal is to learn the closure model $\psi(X)$ of the fast dynamics for a reduced model of the form
\begin{equation*}
    \frac{dX^{(k)}}{dt} =  -X^{(k-1)}(X^{(k-2)} - X^{(k+1)}) - X^{(k)} + F + \psi(X^{(k)}).
\end{equation*}
The closure model $\psi: D \subset \R \mapsto \R$ is parameterized by the finite element method with cubic Hermite polynomials. The domain is set to be $D=[-20, 20]$ and decomposed into $5$ elements and, therefore, $N_{\theta} = 12$.

For the inverse problem, the observations consist of the time-average of the first and second moments of $X^{(1)}, X^{(2)}, X^{(3)}$, and  $X^{(4)}$ over a time window of size $T = 1000$ and, therefore $N_{y} = 14$. The same central limit theorem arguments are used to formulate the
problem as in the Lorenz63 model.
The truth observation $y_{ref}$ is computed with the multiscale chaotic system~\cref{eq:Lorenz96} with a random initial condition $X^{(k)} \sim \N(0,1) \textrm{ and } Y^{(j,k)} \sim \N(0,0.01^2)$. And $1\%$, $2\%$, and $5\%$ Gaussian random noises are added to the observation following~\cref{eq:add-noise}.

We set $r_0 = 0$ and $\gamma=1$; the UKI is thus initialized with $\theta_0 \sim \N(0, \I)$. The observation error is set to be $\eta = \N(0,\textrm{diag}\{0.05^2 y_{obs}\odot y_{obs}\})$, and we take $\alpha=1$, since the system is over-determined. Moreover, these simulations start with another random initialization of $X^{(k)} \sim \N(0,1)$. The learned closure models at the 20th iteration are reported in \cref{fig:Loroze96-Closure}. The estimated empirical probability density functions of the slow variables are reported in \cref{fig:Loroze96-density}. For all scenarios, although the learned closure models show non-trivial variability with respect to
those published in \cite{wilks2005effects,arnold2013stochastic} at the left most
extreme of $D$, the predicted probability density functions match well with the reference, obtained from a full multiscale simulation. It is worth mentioning this problem is not sensitive with respect to the added Gaussian random noise.

\begin{figure}[ht]
\centering
\includegraphics[width=0.4\textwidth]{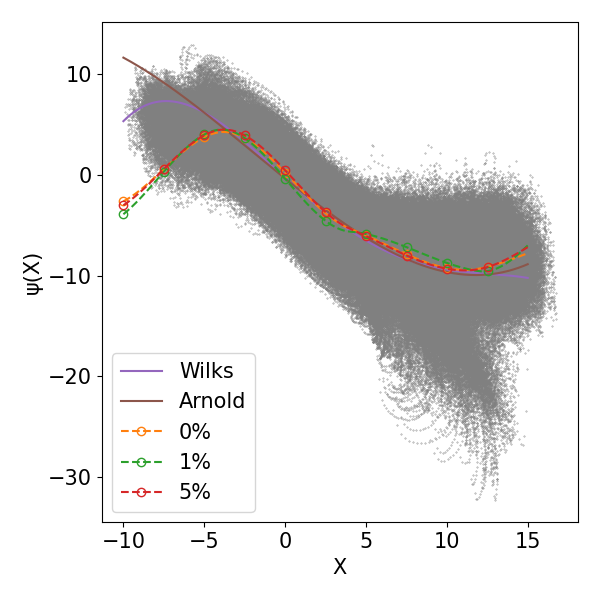}
\caption{Closure terms $\psi(X)$ for the multi-scale Lorenz96 system obtained from the truth~(grey dots) and  polynomial data-fitting by Wilks~\cite{wilks2005effects} and Arnold~\cite{arnold2013stochastic}, compared with what is learned using the UKI approach ($\alpha=1$) with different noise levels.}
\label{fig:Loroze96-Closure}
\end{figure}

\begin{figure}[ht]
\centering
\includegraphics[width=0.4\textwidth]{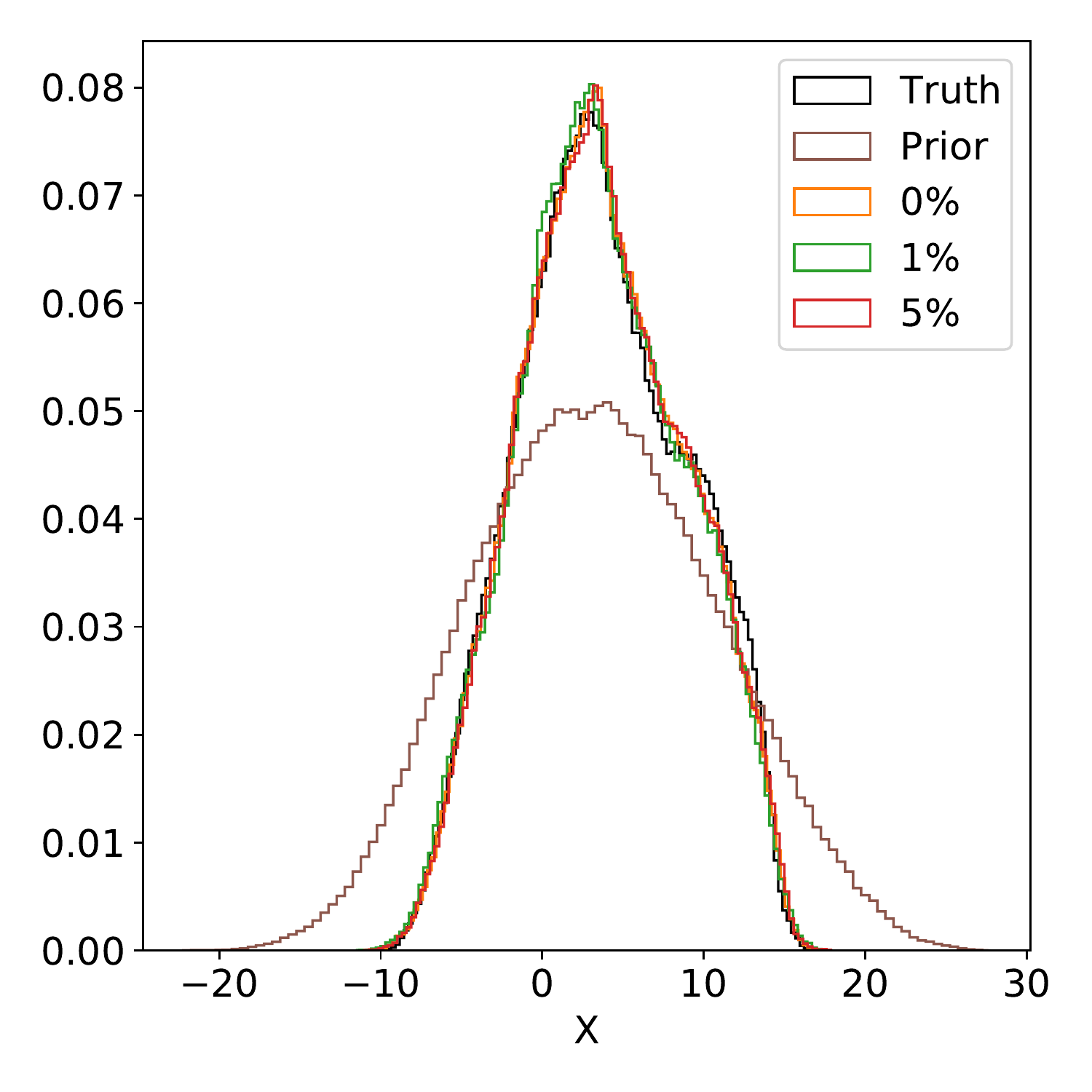}
\caption{Empirical probability density functions of the slow variables $X^{(k)}$ obtained from the full multi-scale Lorenz96 system~(Truth), the initial closure model~(Prior), and the  closure models learned by the UKI~($\alpha=1$) at different noise levels.}
\label{fig:Loroze96-density}
\end{figure}

\subsection{Idealized General Circulation Model}
\label{sec:app:GCM}

Finally, we consider an idealized general circulation model. The model is based on
the 3D Navier-Stokes equations, making the hydrostatic and shallow-atmosphere approximations common in atmospheric modeling. Specifically, we test UKI on the well-known Held-Suarez test case~\cite{held1994proposal}, in which a detailed radiative transfer model is replaced by Newtonian relaxation of temperatures toward a prescribed ``radiative equilibrium'' $T_{\mathrm{eq}}(\phi, p)$ that varies with latitude $\phi$ and pressure $p$. Specifically, the thermodynamic equation for temperature $T$ 
\[
\frac{\partial T}{\partial t} + \dots = Q
\]
(dots denoting advective and pressure work terms) contains a diabatic heat source 
\[
Q = -k_T(\phi, p, p_s) \bigl(T - T_{\mathrm{eq}}(\phi, p)\bigr),
\]
with relaxation coefficient (inverse relaxation time)
\[
k_T = k_a + (k_{s} - k_a)\max\Bigl(0, \frac{\sigma - \sigma_b}{1 - \sigma_b}\Bigr)\cos^4\phi.
\]
Here, $\sigma = p/p_s$, which is pressure $p$ normalized by surface pressure $p_s$, is the vertical coordinate of the model, and 
\[
T_{\mathrm{eq}} = \max\Bigl\{200K, \Bigl[315K - \Delta T_y \sin^2\phi - \Delta\theta_z\log\Bigl(\frac{p}{p_0}\Bigr)\cos^2\phi\Bigr]\Bigl(\frac{p}{p_0}\Bigr)^{\kappa}\Bigr\}
\]
is the equilibrium temperature profile ($p_0 = 10^5~\mathrm{Pa}$ is a reference surface pressure and $\kappa = 2/7$ is the adiabatic exponent). Default parameters are 
\[
k_a = (40\ \mathrm{day})^{-1},\qquad  
k_{s} = (4\ \mathrm{day})^{-1}, \qquad
\Delta T_y = 60\ \mathrm{K}, \qquad
\Delta\theta_z = 10\ \mathrm{K}.
\]

For the numerical simulations, we use the spectral transform method in the horizontal, with T42 spectral resolution~(triangular truncation at wavenumber 42, with $64 \times 128$ points on the latitude-longitude transform grid); we use 20 vertical levels equally spaced in $\sigma$. With the default parameters, the model produces an Earth-like zonal-mean circulation, albeit without moisture or precipitation. A single jet is generated with maximum strength of roughly $30\ \mathrm{m~s^{-1}}$ near $45^{\circ}$ latitude (\cref{fig:GCM-sol}).

Our inverse problem is constructed to learn parameters in the Newtonian relaxation term $Q$:
\begin{equation*}
    (k_a,\ k_{s},\ \Delta T_y,\ \Delta\theta_z).
\end{equation*}
We do so in the presence of the following constraints: 
$$0\  \mathrm{day}^{-1}<k_a < 1\  \mathrm{day}^{-1}, \qquad   k_a<k_s <1\  \mathrm{day}^{-1} + k_a, \qquad   0\  \mathrm{K} < \Delta T_y, \qquad   0\  \mathrm{K}<\Delta\theta_z.$$
Conceptually, the setting is identical to that for the Lorenz63
example. We use the same overline notation to denote averaging, which here in addition to the time average in the Lorenz models also includes a zonal average over longitude (because the model is statistically symmetric under rotations around the planet's spin axis), and we
apply the same central limit theorem arguments to formulate the inverse problem.
To incorporate the imposition of the constraints, the inverse problem is formulated as follows~(see Subsection~\ref{ssec:constraints} for details):
\begin{equation}
    y = \G(\theta) + \eta \quad \mathrm{with} \quad \G(\theta)=\overline{T}(\phi, \sigma)
\end{equation}
with the parameter transformation 
\begin{equation}
    \theta: (k_a, k_s, \Delta T_y, \Delta\theta_z) = \Big(\frac{1}{1+|\theta_{(1)}|},\ \frac{1}{1+|\theta_{(1)}|}+\frac{1}{1+|\theta_{(2)}|},\
    |\theta_{(3)}|,
    |\theta_{(4)}|\Big).
\end{equation}
The observation mapping is defined by  mapping from the unknown $\theta$ to 
the 200-day zonal mean of the temperature as a function of latitude ($\phi$) and height ($\sigma$), after an initial spin-up of 200 days. The truth observation is the 1000-day zonal mean of the temperature~(see \cref{fig:GCM-sol}-a), after an initial spin-up 200 days to eliminate the influence of the initial condition. Because the truth observations come from an average 5 times as long as the observation window used for parameter learning, the chaotic internal variability of the model introduces noise in the observations. As for the Lorenz63 setting, the central limit theorem may be invoked to model the observation error from internal variability. 

To perform the inversion, we set $r_0 = [2\ \textrm{day},\ 2\ \textrm{day},\ 20\ \textrm{K},\ 20\ \textrm{K}]^{T}$  and $\gamma=1$. Thus UKI is initialized with $\displaystyle \theta_0 \sim \N\Big(r_0,\ \I\Big).$ Within the algorithm, we assume that the observation error satisfies $\eta \sim \N(0\ \textrm{K}, 3^2\I\ \textrm{K}^2)$. Because the problem is over-determined,
we set $\alpha=1.$ The estimated parameters and associated 3-$\sigma$ confidence intervals for each component at each iteration are depicted in \cref{fig:GCM-obj}. The estimation of model parameters at the 20th iteration is
\begin{equation*}
    \left(
    k_a\quad
    k_s\quad
    \Delta T_y\quad
    \Delta \theta_z
    \right)
    =
    \left(
    0.0243\ \textrm{day}^{-1}\quad
    0.243\ \textrm{day}^{-1}\quad
    60.2\ \textrm{K} \quad
    9.91\ \textrm{K}
    \right).
\end{equation*}
UKI converges to the true parameters in fewer than 10 iterations with 9 $\sigma$-points, demonstrating the potential of applying UKI for large-scale inverse problems.

\begin{figure}[ht]
    \centering
    \begin{subfigure}{0.49\textwidth}
    \centering
        \includegraphics[width=0.9\linewidth]{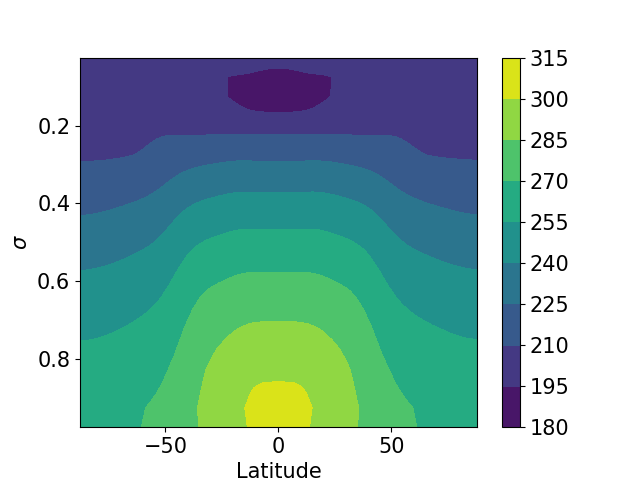}
        \caption{T}
    \end{subfigure}%
    ~ 
    \begin{subfigure}{0.49\textwidth}
    \centering
        \includegraphics[width=0.9\linewidth]{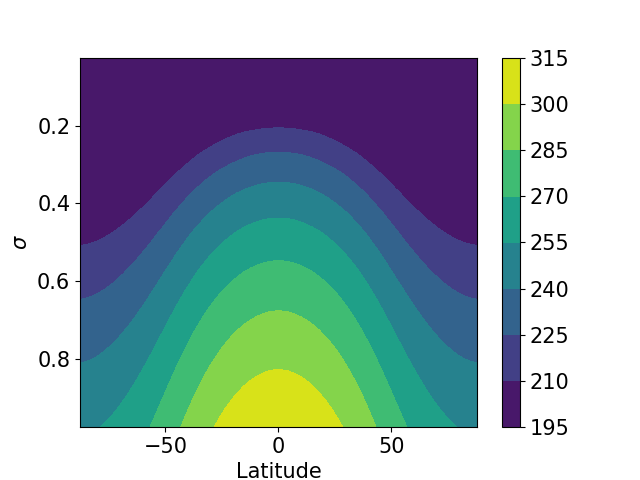}
        \caption{T$_\mathrm{eq}$}
    \end{subfigure}%
    
    \begin{subfigure}{0.49\textwidth}
    \centering
        \includegraphics[width=0.9\linewidth]{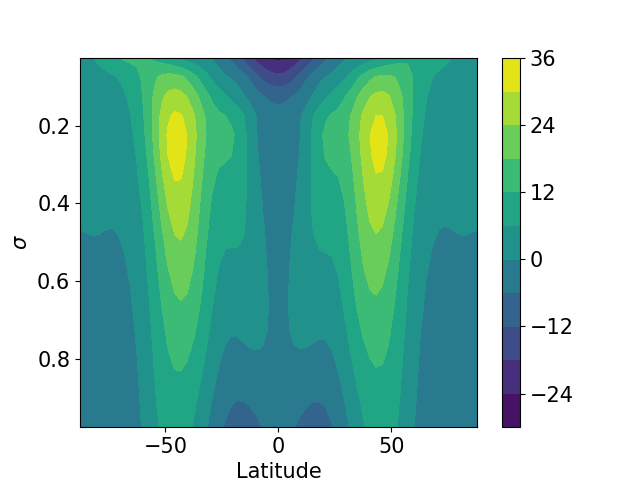}
        \caption{U}
    \end{subfigure}%
    ~ 
    \begin{subfigure}{0.49\textwidth}
    \centering
        \includegraphics[width=0.9\linewidth]{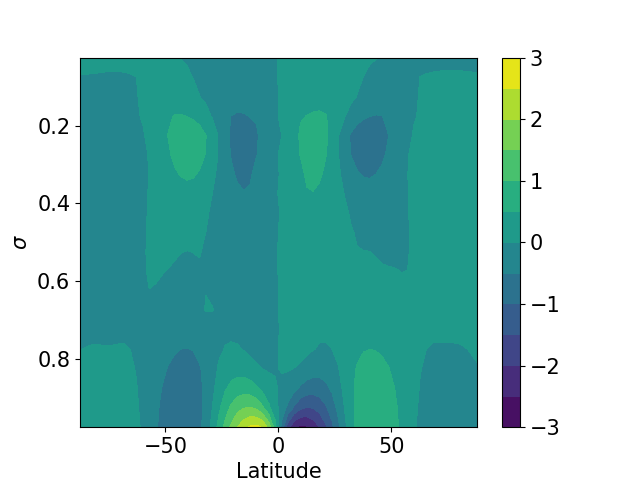}
        \caption{V}
    \end{subfigure}%
    \caption{Zonal mean profile of temperature~(a), radiative equilibrium temperature~(b), zonal wind velocity~(c), and meridional wind velocity~(d), all from a 1000-day average. The horizontal coordinate is latitude and the vertical coordinate is the nondimensional $\sigma$ coordinate of the model.}
    \label{fig:GCM-sol}
\end{figure}

\begin{figure}[ht]
\centering
\includegraphics[width=0.7\textwidth]{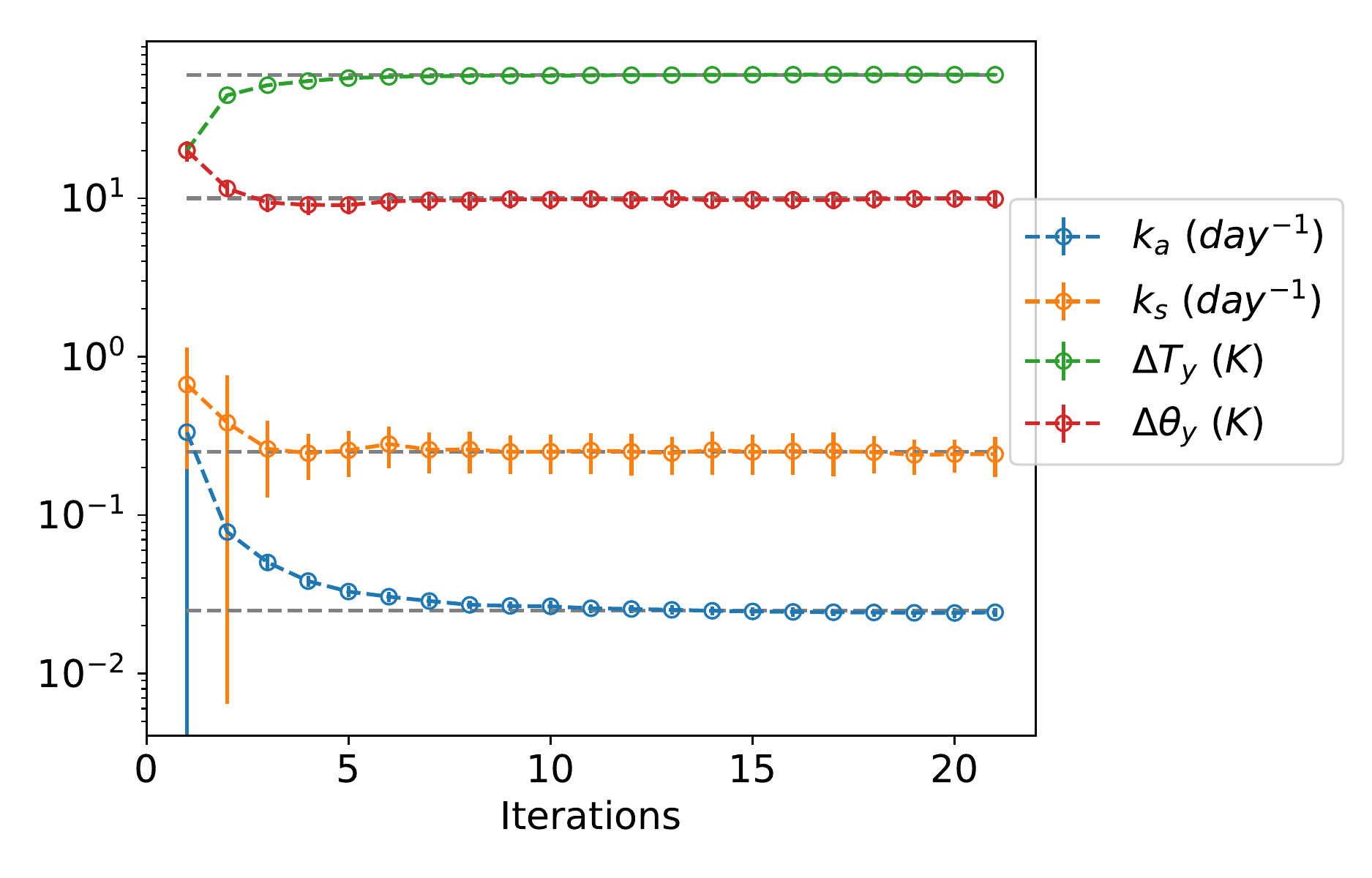}
\caption{Convergence of the idealized general circulation model inverse problem with UKI~($\alpha=1.0$). The true parameter values are represented by dashed grey lines.}
\label{fig:GCM-obj}
\end{figure}

\section{Conclusion}
We introduced a novel stochastic dynamical system, into which an arbitrary inverse problem may be embedded as an observation operator; by applying filtering methods to this
stochastic dynamical system we obtain methods to solve inverse problems. In the linear case,
we have demonstrated that this approach leads to an unusual Tikhonov regularized least
squares solution, with prior covariance depending on the forward model, and a tunable
parameter in the stochastic dynamical system determining the level of
regularization. We have also introduced unscented Kalman inversion (UKI) and
shown that it outperforms the EKI, when applied to the same novel stochastic dynamical system.
As well as outperforming EKI, UKI shares its advantages: it is derivative-free, black-box,
embarrassingly parallel, and robust.  Our numerical results demonstrate its theoretical properties and its applicability;
in particular, it is demonstrated to outperform the EKI on large scale problems in which the number
of unknown parameters is small. Because the methodology constitutes a novel approach to parameter
estimation, there are many avenues for future research,
including applications of the method, methodological improvements and extensions, and theoretical analysis.

\paragraph{Acknowledgments} This work was supported by the generosity of Eric and Wendy Schmidt by recommendation of the Schmidt Futures program and by the National Science Foundation (NSF, award AGS‐1835860). A.M.S. was also supported by the Office of Naval Research (award N00014-17-1-2079). The authors thank Sebastian Reich and anonymous reviewers for helpful comments on an earlier draft.

\appendix
\section{Proof of Theorems}
\label{sec:appendix}

\begin{proof}[Proof of Proposition~\ref{prop:affine-invariance}]
An affine transformation is an invertible mapping from $R^{N_{\theta}}$ to $R^{N_\theta}$ of the form ${}^{*}x = Ax + b$.
When we apply the following affine transformation
\begin{equation*}
\begin{split}
& {}^{*}\mean_n =  A\mean_n + b \qquad {}^{*}\Cov_{n} = A\Cov_{n} A^T  \quad\textrm{ with }\quad {}^{*}r_0 = Ar_0 + b \qquad {}^{*}\Sigma_{\omega} = A\Sigma_{\omega} A^T, \\
 \end{split}
\end{equation*}
keep $y_n$ and $\Sigma_{\nu}$ unchanged, and define ${}^{*}\G(\theta) =  \G\big(A^{-1}(\theta - b)\big)$. We prove 
\begin{equation}
 {}^{*}\mean_{n+1} = A\mean_{n+1} + b \qquad {}^{*}\Cov_{n+1} = A\Cov_{n+1}A^T.
\end{equation}
\Cref{eq:KF_pred_mean} leads to  
\begin{equation}
     {}^{*}\pmean_{n+1} = \alpha{}^{*}\mean_n + (1-\alpha){}^{*}r_0 = A\pmean_{n+1} + b \qquad     {}^{*}\pCov_{n+1} =  \alpha^2 {}^{*}\Cov_{n} + {}^{*}\Sigma_{\omega} = A \pCov_{n+1}A^T.
\end{equation}
Therefore, the distribution of ${}^{*}\theta_{n+1}|Y_n \sim \N( {}^{*} \pmean_{n+1}  {}^{*}\pCov_{n+1})$ is the same as $A\theta_{n+1} + b|Y_n$ and 
\cref{eq:KF_joint2} becomes 
\begin{equation}
   {}^{*}\py_{n+1} =     \py_{n+1}  \qquad 
    {}^{*}\pCov_{n+1}^{\theta y} =    A   \pCov_{n+1}^{\theta y} \qquad 
    {}^{*}\pCov_{n+1}^{yy} =  \pCov_{n+1}^{yy}.
\end{equation}
Finally, \cref{eq:KF_analysis} leads to 
\begin{equation}
    \begin{split}
        {}^{*}\mean_{n+1} &= {}^{*}\pmean_{n+1} + {}^{*}\pCov_{n+1}^{\theta y} ({}^{*}\pCov_{n+1}^{yy})^{-1} (y_{n+1} - {}^{*}\py_{n+1})= A\mean_{n+1} + b,\\
         {}^{*}\Cov_{n+1} &= {}^{*}\pCov_{n+1} - {}^{*}\pCov_{n+1}^{\theta y}({}^{*}\pCov_{n+1}^{yy})^{-1} {{}^{*}\pCov_{n+1}^{\theta y}}{}^{T} = A\Cov_{n+1}A^T.
    \end{split}
\end{equation}
\end{proof}

\begin{proof}[Proof of Lemma~\ref{lem:uki}]
In this proof recall that $\theta \sim \N(\mean, \Cov).$ 
When both $\G_1 = G_1$ and $\G_2 = G_2$ are linear transformations, we have 
\begin{align*}
  \E[ \G_i(\theta)] &=  G_i \E[\theta] = G_i m = \G_i(m),\\
  \mathrm{Cov}[\G_1(\theta),\G_2(\theta)] &=
  G_1 \mathrm{Cov}[\theta,\theta]G_2^T
  = G_1 C G_2^T,\\
  \sum_{j=1}^{2N_{\theta}} W_j^{c} (\G_1(\theta^j) - \E\G_1(\theta))(\G_2(\theta^j) - \E\G_2(\theta))^T
  &= 
  \frac{1}{2a^2N_{\theta}}\sum_{j=1}^{2N_{\theta}} (G_1\cdot\theta^j - G_1 m)(G_2\cdot\theta^j - G_2 m )^T\\
  &= \frac{1}{2a^2N_{\theta}}\sum_{j=1}^{2N_{\theta}} 2G_1 c_j[\sqrt{C}]_j  c_j[\sqrt{C}]_j G_2^T\\
  &= G_1CG_2^T. 
\end{align*}
In the following we use $\nabla^k \G_i$ to denote the $k^{th}$
derivative of $\G_i$ evaluated at $m$. For the nonlinear case,  Taylor's expansion of $\G_i(\cdot)$ at $m$ is then
\begin{align*}
    \G_i(\theta) = \G_i(m) + \nabla \G_i \delta \theta + \frac{1}{2} \nabla^2 \G_i \delta \theta \otimes \delta \theta +\frac{1}{6} \nabla^3 \G_i \delta \theta \otimes \delta \theta \otimes \delta \theta + O(\|\delta \theta\|^4) \quad\textrm{with}\quad \delta\theta = \theta - m.\\
\end{align*}
The mean approximation is thus first-order accurate:
\begin{align*}
    \E\G_i(\theta)  
    = \G_i(m) +  O(\|C\|).\\
\end{align*}
The covariance approximation is second-order accurate:
\begin{align*}
 \mathrm{Cov}[\G_1(\theta),\G_2(\theta)] 
 & =\E\left(\G_1(\theta) - \E\G_1(\theta)\right)\left(\G_2(\theta) - \E\G_2(\theta)\right)^T\\
 & =\E\left(\nabla\G_1 \delta \theta
 + \frac{1}{2}\nabla^2\G_1(\delta\theta\otimes\delta\theta - C)\right) \left(\nabla\G_2 \delta \theta + \frac{1}{2}\nabla^2\G_2(\delta\theta\otimes\delta\theta - C)\right)^T + \bigO(\|C\|^2)\\
&= \nabla\G_1 C \nabla\G_2^T + \bigO(\|C\|^2), 
\end{align*}
whilst we also have
\begin{align*}
  \sum_{j=1}^{2N_{\theta}}& W_j^{c} (\G_1(\theta^j) - \E\G_1(\theta))(\G_2(\theta^j) - \E\G_2(\theta))^T\\
  &= \sum_{j=1}^{2N_{\theta}} W_j^{c} (\G_1(\theta^j) - \G_1(m))(\G_2(\theta^j) - \E\G_2(m))^T \\
  &= \sum_{j=1}^{N_{\theta}} W_j^{c} (\nabla \G_1 c_j[\sqrt{C}]_j + \frac{1}{2} \nabla^2 \G_1 c_j[\sqrt{C}]_j \otimes c_j[\sqrt{C}]_j)(\nabla \G_2 c_j[\sqrt{C}]_j+ \frac{1}{2} \nabla^2 \G_2 c_j[\sqrt{C}]_j\otimes c_j[\sqrt{C}]_j)^T\\
  &+ \sum_{j=1}^{N_{\theta}} W_j^{c} (-\nabla \G_1 c_j[\sqrt{C}]_j + \frac{1}{2} \nabla^2 \G_1 c_j[\sqrt{C}]_j \otimes c_j[\sqrt{C}]_j)(-\nabla \G_2 c_j[\sqrt{C}]_j + \frac{1}{2} \nabla^2 \G_2 c_j[\sqrt{C}]_j\otimes c_j[\sqrt{C}]_j)^T\\
  &\quad\quad+ \bigO(\|C\|^2)\\
  &= \frac{1}{2a^2N_{\theta}}\sum_{j=1}^{N_{\theta}} 2\nabla \G_1 c_j[\sqrt{C}]_j c_j[\sqrt{C}]_j \nabla \G_2^T  + \bigO(\|C\|^2)\\
   &= \nabla \G_1 C \nabla \G_2^T   + \bigO(\|C\|^2).
\end{align*}
\end{proof}

\begin{proof}[Proof of Theorem~\ref{th:lin_converge}]
In this proof we let $\cB$ denote the Banach space of matrices in $\R^{N_\theta \times N_\theta}$ equipped
with the operator norm induced by the Euclidean norm on $\R^{N_\theta}.$ Furthermore, we let $\cL$
denote the Banach space of bounded linear operators from $\cB$ into itself, equipped with the standard
induced operator norm. 
For simplicity we consider the case $r_0=0$; a change of origin may be used to extend to the case $r_0 \ne 0.$
We first prove that the precision operators converge: $\displaystyle \lim_{n \rightarrow \infty} \Cov_{n}^{-1} = \Cov_{\infty}^{-1}$; we then study behaviour of the mean sequence $\{m_n\}_{n \in \bbZ^+}$. For both the precision and the
mean we first study $\alpha \in (0,1)$ and then $\alpha=1.$
In what follows it is useful to note 
\cite{law2015data}[Theorem 4.1] that the mean and covariance update equations~\eqref{eq:Lin_KF_analysis} can be rewritten as  
\begin{equation}
\begin{split}
~\label{eq:Sigma_update}
    \Cov_{n+1}^{-1} &= G^T\Sigma_{\nu}^{-1}G + (\alpha^2 \Cov_n + \Sigma_{\omega})^{-1},\\
  \Cov_{n+1}^{-1}m_{n+1} &= G^T\Sigma_{\nu}^{-1}y + (\alpha^2 \Cov_n + \Sigma_{\omega})^{-1}\alpha m_n;  
\end{split}
\end{equation}
furthermore the iteration for the covariance remains in the cone of positive semi-definite
matrices \cite{law2015data}[Theorem 4.1].
Since $\Sigma_{\omega} \succ 0$, the sequence $\{\Cov_n^{-1}\}$ is bounded:
\begin{equation}
    \label{eq:Cbnd}
G^T\Sigma_{\nu}^{-1}G\preceq\Cov_n^{-1}\preceq G^T\Sigma_{\nu}^{-1}G + \Sigma_{\omega}^{-1}, \quad \forall n \in \mathbb{Z}_+.
\end{equation}
Introducing $(\Cov'_{n})^{-1} := \Sigma_{\omega}^{\frac{1}{2}}\Cov_{n}^{-1}\Sigma_{\omega}^{\frac{1}{2}}$, 
we may rewrite the covariance update equation~\eqref{eq:Sigma_update} in the form
\begin{equation}
\label{eq:Sigma_update2525}
    \begin{split}
    &(\Cov'_{n+1})^{-1} =  \Sigma_{\omega}^{\frac{1}{2}}G^T\Sigma_{\nu}^{-1}G\Sigma_{\omega}^{\frac{1}{2}} + \left(\alpha^2 \Cov'_{n} + \I\right)^{-1}.\\
    \end{split}
\end{equation}
We define the map
\begin{equation}
f(X;\alpha)=  \Sigma_{\omega}^{\frac{1}{2}}G^T\Sigma_{\nu}^{-1}G\Sigma_{\omega}^{\frac{1}{2}} + \left(\alpha^2 X^{-1} + \I\right)^{-1}
\end{equation}
noting that then $(\Cov'_{n+1})^{-1}=
f\bigl((\Cov'_{n})^{-1};\alpha\bigr).$
This iteration is well-defined for $\Cov'_{n}$ in
$\cB$ satisfying \eqref{eq:Cbnd} and hence for the iteration
\eqref{eq:Sigma_update}.

We first consider $\alpha \in (0, 1)$. Then \cref{eq:Sigma_update2525} leads to 
\begin{equation}
    \begin{split}
    &\Cov'_{n+1} \preceq \alpha^2 \Cov'_{n} + \I \preceq \frac{1-\alpha^{2n+2}}{1-\alpha^2}\I + \alpha^{2n+2}
    \Cov'_0 \preceq \frac{1}{1-\alpha^2}\I + \alpha^{2n+2}\Cov'_0,\\
    \end{split}
\end{equation}
and hence there exists $\epsilon_0 \in (0,1-\alpha)$ such that,
for $n$ is sufficiently large, we have 
\begin{equation}
\label{eq:alpha_cov_lb}
    \begin{split}
    &(\Cov'_{n+1})^{-1} \succeq (1 -  \alpha^2 - \epsilon_0)\I. \\
    \end{split}
\end{equation}
Let $\cM \subset \cB$ denote the set of matrices $B \in \cB$
satisfying $B \succeq (1 -  \alpha^2 - \epsilon_0)\I.$ Then $\cM$ is
absorbing and forward invariant under $f$. Thus to show the existence
of a globally exponentially attracting steady state it suffices to show that $f(\cdot;\alpha)$ is a contraction on $\cM.$
\footnote{The use of contraction mapping arguments to study
convergence of the Kalman filter is widespread, sometimes
applied to the covariance and not the precision \cite{lancaster1995algebraic}, and sometimes
using Riemannian metric space structure on positive-definite matrices, rather than the vector space structure used here \cite{bougerol1993kalman}.} The derivative of $f(\cdot;\alpha):\cM \mapsto \cM$ is the element $Df(X;\alpha) \in \cL$ defined by its action on $\Delta X \in \cB$ as follows:
\begin{equation}
Df(X;\alpha)\Delta X = \alpha^2 (X+\alpha^2\I )^{-1} \Delta X 
   (X +\alpha^2\I)^{-1}.
\end{equation}
Thus
\begin{align*}
   \left\lVert Df(X;\alpha) \Delta X \right\rVert 
   =&  \alpha^2\left\lVert (X+\alpha^2\I )^{-1} \Delta X 
   (X +\alpha^2\I)^{-1} \right\rVert. \\
   \leq & \frac{\alpha^2}{(1-\epsilon_0)^2}\left\lVert  \Delta X 
   \right\rVert. 
\end{align*}
Therefore, since $\alpha \in (0,1-\epsilon_0)$,
$$\sup_{X \in \cM} \|Df(X;\alpha)\|_{\cL}<1$$
and $f$
is a contraction map on $\cM.$ This establishes the exponential convergence of  $\{(\Cov'_{n})^{-1}\}$. Finally, 
the sequence $\{\Cov_n^{-1}\}$ converges exponentially fast to $\Cov_{\infty}^{-1}$, the non-singular fixed point of \cref{eq:Sigma_update}; \Cref{eq:alpha_cov_lb} indicates that $\Cov_{\infty}^{-1}$ is indeed non-singular.

When $\alpha = 1$ define mapping $f(X)=f(X;1)$ so that
$$(\Cov'_{n+1})^{-1}=
f\bigl((\Cov'_{n})^{-1}\bigr).$$
The derivative $Df(X) \in \cL$ is
defined by its action on $\Delta X \in \cB$ as follows:
\begin{equation}
\begin{split}
   Df(X) \Delta X =&  (\I+X)^{-1}\Delta X (I+X)^{-1}.
\end{split}
\end{equation}
Thus, using the lower bound from \eqref{eq:Cbnd} and $\text{Range}(G^T)=\R^{N_{\theta}}$,
\begin{equation}
\begin{split}
   \left\lVert Df(X) \Delta X \right\rVert 
   \leq& \left\lVert \left(\I+X\right)^{-1}\right\rVert^2 \left\lVert \Delta X\right\rVert \\
   \leq& \left\lVert \left(\I+\Sigma_{\omega}^{\frac{1}{2}}G^T\Sigma_{\nu}^{-1}G\Sigma_{\omega}^{\frac{1}{2}}\right)^{-1}\right\rVert^2 \left\lVert \Delta X\right\rVert \\
   \leq& (1 + \epsilon_1)^{-2} \left\lVert \Delta X\right\rVert, \\
\end{split}
\end{equation}
where $\epsilon_1 > 0$. Therefore, $f$ is a contraction map on
the whole of $\cB$ and
the sequence $\{\Cov_n^{-1}\}$ converges.  
This completes the proof of exponential
convergence of $\{\Cov^{-1}_{n}\}$
to a limit; the sequence $\{\Cov_n^{-1}\}$ converges to $\Cov_{\infty}^{-1}$, the fixed point of \cref{eq:Sigma_update},
viewed as a mapping on precision matrices. 
That $\Cov_{\infty} \succ 0$ follows from \eqref{eq:Cbnd}.
Because the convergence is global,
the result also establishes the uniqueness of the steady state of \cref{eq:cov_steady}.

We now prove that the mean $\{m_n\}$ converges exponentially fast to $\mean_{\infty}.$
Using \eqref{eq:Sigma_update} the update equation~(\ref{eq:Lin_KF_analysis}) of $\mean_n$ can be rewritten as
\begin{equation}
 \mean_{n+1}=\alpha(\I-C_{n+1}G^T\Sigma_{\nu}^{-1}G)m_n
 +C_{n+1}G^T\Sigma_{\nu}^{-1}y.   
\end{equation}
Thus convergence to $\mean_{\infty}$ satisfying
\begin{equation}
\label{eq:mean_infty}
 \mean_{\infty}=\alpha(\I-C_{\infty}G^T\Sigma_{\nu}^{-1}G)
 \mean_{\infty}
 +C_{\infty}G^T\Sigma_{\nu}^{-1}y   
\end{equation}
is determined by the spectral radius of 
$\alpha(\I-C_{n+1}G^T\Sigma_{\nu}^{-1}G).$
The matrix $\I-C_{n+1}G^T\Sigma_{\nu}^{-1}G$ has real spectrum; this may be established
by showing the same for $\I-C_{n+1}(G^T\Sigma_{\nu}^{-1}G+\delta\I)$, for $\delta>0$, and
letting $\delta \to 0.$
If $\alpha \in (0,1)$, using~\cref{eq:Cbnd}, it follows that
\begin{equation*}
\begin{split}
\rho(\alpha\I - \alpha \Cov_{n+1}G^T\Sigma_{\nu}^{-1}G) \leq \alpha < 1
\end{split}
\end{equation*}
and, by using a vector norm on $\R^{N_{\theta}}$ in which the induced operator norm on $\I-C_{n+1}G^T\Sigma_{\nu}^{-1}G$
is less than one, it follows that $\{\mean_{n}\}$ converges exponentially fast to 
$\mean_{\infty}.$ If $\alpha=1$ then we use the fact that $B:=G^T\Sigma_{\nu}^{-1}G$ is symmetric and that $B \succ 0.$ From this it follows that $I-\Cov_{n+1}B$ has the same
spectrum as $I-B^{\frac12}\Cov_{n+1}B^{\frac12}.$
Using the upper bound on $\Cov_{n+1}^{-1}$ appearing in \eqref{eq:Cbnd} we deduce that
\begin{equation*}
\begin{split}
\rho(\I-\Cov_{n+1}B) &=
\rho\Big(\I - B^{\frac{1}{2}}\Cov_{n+1}B^{\frac{1}{2}}\Big)\\ 
&\leq
1 - \rho\Big(B^{\frac{1}{2}}\big(B + \Sigma_{\omega}^{-1}\big)^{-1}B^{\frac{1}{2}}\Big) \\
&= 1 -\epsilon_2,
\end{split}
\end{equation*}
for some $\epsilon_2 \in (0,1)$. Since the spectral radius of  $I-\Cov_{n+1}B$  is less than one, there is again a norm
on $\R^{N_{\theta}}$ in which the operator norm on $I-\Cov_{n+1}B$ is less than one and exponential convergence
follows.
Equation \ref{eq:mean_infty} can be rewritten as 
\begin{equation*}
\begin{aligned}
    0 &= C_{\infty}\Big(G^T\Sigma_{\nu}^{-1}(y - G \mean_{\infty}) + 
    (1 - \alpha)(G^T\Sigma_{\nu}^{-1}G - C^{-1}_{\infty})\mean_{\infty} \Big)\\
    &= C_{\infty}\Big(G^T\Sigma_{\nu}^{-1}(y - G \mean_{\infty}) - 
    (1 - \alpha) \pCov^{-1}_{\infty}\mean_{\infty} \Big).
\end{aligned}
\end{equation*}
Finally we note that $\mean_{\infty}$ is the minimizer of~\cref{eq:KI299}. 
\end{proof}

\begin{proof}[Proof of Theorem~\ref{th:lin_converge2}]
In this setting where $\alpha=1$ and $\Sigma_\omega=0$ it follows 
from \eqref{eq:Sigma_update} that
\begin{equation}
\begin{split}
    \Cov_{n+1}^{-1} &= G^T\Sigma_{\nu}^{-1}G + \Cov_n^{-1},\\
  \Cov_{n+1}^{-1}m_{n+1} &= G^T\Sigma_{\nu}^{-1}y + \Cov_n^{-1}m_n;  
\end{split}
\end{equation}
so that
\begin{equation}
\begin{split}
~\label{eq:Sigma_update3}
    \Cov_{n}^{-1} &= nG^T\Sigma_{\nu}^{-1}G + \Cov_0^{-1},\\
  \Cov_{n}^{-1}m_{n} &= nG^T\Sigma_{\nu}^{-1}y + \Cov_0^{-1}m_0.  
\end{split}
\end{equation}
This demonstrates that if $C_0$ is positive definite so is $C_n$
for all $n \in \bbN.$ In the variables \eqref{eq:newvar} we obtain
\begin{equation}
\begin{split}
~\label{eq:Sigma_update4} 
    (\Cov_{n}')^{-1} &= n(G')^T\Sigma_{\nu}^{-1}G' + I,\\
  (\Cov_{n}')^{-1}m_{n}' &= n(G')^T\Sigma_{\nu}^{-1}y + m_0'.  
\end{split}
\end{equation}
This gives \eqref{eq:newvar2} and the proof is completed by
applying the projections $P$ and $Q$, noting that $PG'=G'$ 
and $QS=0, QG'=0.$
\end{proof}

\begin{proof}[Proof of Proposition~\ref{th:lin_converge3}]
In this setting recall that we have $\alpha=1$ and $\Sigma_{\omega}\succ 0$. The covariance update equation \eqref{eq:Lin_KF_analysis_cov} can be rewritten as 
\begin{equation}
\begin{split}
~\label{eq:Sigma_update27}
    \Cov_{n+1}^{-1} &= G^T\Sigma_{\nu}^{-1}G + (\Cov_n + \Sigma_{\omega})^{-1},\\
  \Cov_{n+1}^{-1}m_{n+1} &= G^T\Sigma_{\nu}^{-1}y + (\Cov_n + \Sigma_{\omega})^{-1}m_n. 
\end{split}
\end{equation}
Since $\Sigma_{\omega} \succ 0$, the sequence $\{\Cov_n^{-1}\}$ is bounded: $G^T\Sigma_{\nu}^{-1}G\preceq\Cov_n^{-1}\preceq G^T\Sigma_{\nu}^{-1}G + \Sigma_{\omega}^{-1}$ and $\Cov_n \succ 0$. Let us denote $$
{\Cov}_{n}^{'} = \Sigma_{\omega}^{-\frac{1}{2}}\Cov_{n}\Sigma_{\omega}^{-\frac{1}{2}},\quad \mean_n' = \Sigma_{\omega}^{-\frac{1}{2}}\mean_n,\quad G' = G \Sigma_{\omega}^{\frac{1}{2}}, \quad S = (G')^T \Sigma_{\nu}^{-1} G'.$$

First we prove the convergence of $\{\Cov_n^{-1}\}$.
Note that the update equation~\eqref{eq:Sigma_update27} becomes
\begin{equation}
\label{eq:Sigma_update2}
    \begin{split}
    &(\Cov_{n+1}')^{-1} =  f\left((\Cov_{n}')^{-1}\right)
    \end{split}
\end{equation}
where
\begin{equation*}
f(X)=  S + \left(X^{-1} + \I\right)^{-1}.
\end{equation*}
We note that the nullspace of $S$ is equal to the nullspace of $G'$.
Now consider the Ker($G'$) $\otimes$ Range(${G'}^T$) decomposition of the vector space, and the corresponding orthogonal projections $P$ and $Q$. Constraining on Ker($G'$), we have
\begin{equation}
    \begin{split}
    &(\Cov'_{n+1})^{-1} =  \left(\Cov'_{n} + \I\right)^{-1} \prec (\Cov'_{n})^{-1}.\\
    \end{split}
\end{equation}
Since the sequence $\{(\Cov'_{n})^{-1}\}$ is strictly decreasing in the cone of
positive-semidefinite matrices it must have limit $0$. Therefore, we have $\displaystyle \lim_{n\to \infty} (\Cov'_{n})^{-1} = 0$ on Ker($G'$).
Constraining on Range(${G'}^T$), where $S\succ 0$, the update function~\eqref{eq:Sigma_update2} satisfies 
\begin{equation}
\begin{split}
   \left\lVert \frac{d f(X)}{dX} \Delta X \right\rVert =&  \left\lVert (\I+X)^{-1}\Delta X (I+X)^{-1}\right\rVert \\ 
   \leq& \left\lVert \left(\I+X\right)^{-1}\right\rVert^2 \left\lVert \Delta X\right\rVert \\
   \leq& \left\lVert \left(\I+S\right)^{-1}\right\rVert^2 \left\lVert \Delta X\right\rVert \\
   \leq& (1 + \epsilon_1)^{-2} \left\lVert \Delta X\right\rVert, \\
\end{split}
\end{equation}
where $\epsilon_1 > 0$. Therefore, \cref{eq:Sigma_update2} is a contraction map on Range(${G'}^T$), which leads to the convergence of  $(\Cov'_{n})^{-1}$ on that space.
Combining the convergence of $(\Cov'_{n})^{-1}$ on both subspaces, we deduce that $\Cov_n^{-1}$ converges to a singular matrix. We conclude the analysis of the covariance by
noting that Equation~\eqref{eq:Sigma_update27} 
leads to $\Cov_{n+1} \preceq \Cov_n + \Sigma_{\omega}$, 
which implies that $\Cov_{n+1} \preceq \Cov_0 + n\Sigma_{\omega}$
as required for \eqref{eq:ntb}. 

Now we establish the convergence of $\{m'_n\}$. 
The update equation of $\mean'_n$ can be rewritten as 
\begin{equation}
    \begin{split}
\mean'_{n+1} = \mean'_{n} + \Cov'_{n+1}{G'}^T\Sigma_{\nu}^{-1}y - \Cov'_{n+1}S \mean'_{n}.
    \end{split}
\end{equation}
Consider the Range(${G'}^T$) $\otimes$ Ker($G'$) decomposition $\mean'_n = P\mean'_n + Q\mean'_n$, noting that in these coordinates the update equation can be rewritten as 
\begin{subequations}
\begin{align}
    &P\mean'_{n+1} = P\mean'_{n} + P\Cov'_{n+1}{G'}^T\Sigma_{\nu}^{-1}y - P\Cov'_{n+1}S P\mean'_{n}, \label{eq:contracting-1} \\
     &Q\mean'_{n+1} = Q\mean'_{n} + Q\Cov'_{n+1}{G'}^T\Sigma_{\nu}^{-1}y - Q\Cov'_{n+1}S P\mean'_{n}. \label{eq:contracting-2}
\end{align}
\end{subequations}
Now consider the operator $P-P\Cov'_{n+1}SP$ constrained to apply on Range(${G'}^T$). On
this space $S \succ 0$ and, with $\I - S^{\frac{1}{2}}\Cov'_{n+1}S^{\frac{1}{2}}$ also
viewed as acting on Range(${G'}^T$),
\begin{equation}
\label{eq:cov-mean-par}
\begin{split}
\rho(P - P\Cov'_{n+1}SP) &= \rho\Big(\I - S^{\frac{1}{2}}\Cov'_{n+1}S^{\frac{1}{2}}\Big) \\
&
\leq
1 - \rho\Big(S^{\frac{1}{2}}(S + \I)^{-1}S^{\frac{1}{2}}\Big) \\
&= 1 -\epsilon_0,
\end{split}
\end{equation}
where $\epsilon_0 \in (0,1)$. Hence, we deduce that $\{P\mean'_{n}\}$ converges exponentially to $\theta_{ref} := S^{+}{G'}^{T}\Sigma_{\nu}^{-1}y$ where $S^+$ denotes the Moore-Penrose
inverse of $S$. Since $SS^+=\I$ on Range(${G'}^T$), the update equation~\eqref{eq:contracting-2} for  $Qm'_n$ may be written as
\begin{equation}
\begin{split}
    &Q\mean'_{n+1} = Q\mean'_n +  Q  \Cov'_{n+1}S(\theta_{ref} - P\mean'_{n}).
\end{split}
\end{equation}
It follows from \eqref{eq:ntb} that $\displaystyle \|Q \Cov'_{n+1}S\|$ is bounded above
by a function which grows linearly in $n$ in any norm. Furthermore $\displaystyle \lim_{n\rightarrow \infty} Pm'_n - \theta_{ref} = 0$ exponentially fast. Hence we deduce the exponential convergence of  $\{Q\mean'_n\}$ to a limit, depending on $Q\mean'_0.$ 
Therefore, $\{\mean_{n}\}$ converges exponentially fast to a stationary point of 
$\frac{1}{2}\lVert \Sigma_{\nu}^{-\frac{1}{2}} (y - G \theta)\rVert^2$.
\end{proof}

\begin{proof}[Proof of Lemma~\ref{th:filter}]
\begin{equation}
\begin{split}
    \frac{\partial\F\G(\mean, C)}{\partial \mean} &= \frac{\partial\E[\G(\theta)]}{\partial \mean} \\
    &= \int \G(\theta)\frac{1}{\sqrt{(2\pi)^{N_{\theta}}|\Cov|}} \exp\bigl(-\frac{1}{2}\|\Cov^{-\frac12}(\theta -\mean)\|^2\bigr) \big(\Cov^{-1} (\theta - \mean)\big)^{T} d\theta\\
    &= \int \G(\theta)(\theta - \mean)^{T} \frac{1}{\sqrt{(2\pi)^{N_{\theta}}|\Cov|}} \exp\bigl(-\frac{1}{2}\|\Cov^{-\frac12}(\theta -\mean)\|^2\bigr)  d\theta \cdot  \Cov^{-1}\\
    &= \int \bigl(\G(\theta)-\E\G(\theta)\bigr) (\theta - \mean)^{T} \frac{1}{\sqrt{(2\pi)^{N_{\theta}}|\Cov|}} \exp\bigl(-\frac{1}{2}\|\Cov^{-\frac12}(\theta -\mean)\|^2\bigr)  d\theta \cdot  \Cov^{-1}\\
    &= \F d\G(\mean, C).
\end{split}
\end{equation}
\end{proof}

\begin{proof}[Proof of Proposition~\ref{th:uki}]
From~\cref{eq:UKI_smooth_0} we have
\begin{equation}
\begin{split}
    &\py_{n+1} = \F_u\G_{n+1},\\
    &{\pCov_{n+1}^{\theta y}} = {\pCov_{n+1}}\F_u d \G_{n+1}^T.
\end{split}
\end{equation}
In what follow we use the modified unscented transform~\cref{def:unscented_tranform}, and specifically
its use to derive \eqref{eq:uki_sigma-points} and
\eqref{eq:UKI-analysis}. First note that 
\begin{equation*}
\pmean_{n+1} = \pp^0_{n+1}, \quad
    \py_{n+1} = \G(\pp^0_{n+1}) = \py^0_{n+1}, 
    \quad \textrm{and} \quad 
    w = W_1^c = W_2^c = \cdots = W_{2N_{\theta}}^c. 
\end{equation*} 
Now define the matrices
\begin{equation*}
\begin{split}
    &\mathcal{Y}_1 = [\py^1_{n+1} - \py_{n+1} \quad \py^2_{n+1} - \py_{n+1} \quad \cdots \quad \py^{N_{\theta}}_{n+1} - \py_{n+1}], \\
    &\mathcal{Y}_2 = [\py^{N_{\theta}+1}_{n+1} - \py_{n+1} \quad \py^{N_{\theta}+2}_{n+1} - \py_{n+1} \quad \cdots \quad \py^{2N_{\theta}}_{n+1} - \py_{n+1}], \\
    &\Theta = [\pp^1_{n+1} - \pmean_{n+1} \quad \pp^2_{n+1} - \pmean_{n+1}  \quad \cdots \quad \pp^{N_{\theta}}_{n+1} - \pmean_{n+1}].
\end{split}
\end{equation*}
Then we have 
\begin{subequations}
\begin{align}
&\pCov^{\theta y}_{n+1} = \sum_{j=1}^{2N_\theta}W_j^{c}
        (\pp^j_{n+1} - \pmean_{n+1} )(\ppy^j_{n+1} - \py_{n+1})^T  = w \Theta (\mathcal{Y}_1^T - \mathcal{Y}_2^T),\\
&\pCov^{yy}_{n+1} = \sum_{j=1}^{2N_\theta}W_j^{c}
        (\ppy^j_{n+1} - \py_{n+1} )(\ppy^j_{n+1} - \py_{n+1})^T + \Sigma_{\nu} = w (\mathcal{Y}_1\mathcal{Y}^T_1 + \mathcal{Y}_2\mathcal{Y}^T_2) + \Sigma_{\nu},\label{eq:app:cpp}\\
&\pCov_{n+1} = \sum_{j=1}^{2N_\theta}W_j^{c}
        (\pp^j_{n+1} - \pmean_{n+1} )(\pp^j_{n+1} - \pmean_{n+1})^T  = 2w \Theta \Theta^T. \label{eq:app-pCov}
        \end{align}
\end{subequations}
\Cref{eq:app-pCov} follows from the definition of the sigma points~(\ref{eq:uki_sigma-points}).
Since $\pCov_{n+1} \succeq \Sigma_{\omega}\succ 0$, the matrix $\Theta \in \R^{N_{\theta}\times N_{\theta}}$ is non-singular. Thus we have
\begin{equation}
\label{eq:app:fdg}
\begin{split}
\F_u d\G_{n+1}\pCov_{n+1} \F_u d\G_{n+1}^{T} &= {\pCov_{n+1}^{\theta y}}{}^T {\pCov_{n+1}}^{-1} \pCov_{n+1} {\pCov_{n+1}}^{-1} {\pCov_{n+1}^{\theta y}}\\
 &= {\pCov_{n+1}^{\theta y}}{}^T  {\pCov_{n+1}}^{-1} {\pCov_{n+1}^{\theta y}}\\
 &= w(\mathcal{Y}_1 - \mathcal{Y}_2)\Theta^T \Big(2w \Theta \Theta^T\Big)^{-1} \Theta (\mathcal{Y}^T_1 - \mathcal{Y}^T_2)w\\
 &=\frac{w}{2}(\mathcal{Y}_1\mathcal{Y}^T_1 + \mathcal{Y}_2\mathcal{Y}^T_2
 - \mathcal{Y}_1\mathcal{Y}^T_2 
 - \mathcal{Y}_2\mathcal{Y}^T_1).
\end{split}
\end{equation}
Using~\cref{eq:app:fdg} in \cref{eq:app:cpp} yields 
\begin{equation}
\begin{split}
\pCov^{yy}_{n+1} =\F_u d\G_{n+1}\pCov_{n+1} \F_u d\G_{n+1}^{T} + \Sigma_{\nu} + \widetilde{\Sigma}_{\nu, n+1}, 
\end{split}
\end{equation}
where
$$\widetilde{\Sigma}_{\nu, n+1} := \frac{w}{2}(\mathcal{Y}_1 + \mathcal{Y}_2)( \mathcal{Y}_1 + \mathcal{Y}_2)^T.$$ We note that $\widetilde{\Sigma}_{\nu, n+1}$ is positive semi-definite. Furthermore, the $i$-th column of $\mathcal{Y}_1 + \mathcal{Y}_2$ satisfies
\begin{equation}
\begin{split}
    \ppy^i_{n+1} + \ppy^{i+N_\theta}_{n+1} - 2 \py_{n+1} &= \G(\pmean_{n+1} + c_i[\sqrt{\pCov_{n+1}}]_j) + \G(\pmean_{n+1} - c_i[\sqrt{\pCov_{n+1}}]_j) - 2\G(\pmean_{n+1})\\
    &\approx \frac{d^2\G(\pmean_{n+1})}{d^2\theta} : [\sqrt{\pCov_{n+1}}]_j \otimes[\sqrt{\pCov_{n+1}}]_j.
\end{split}
\end{equation}
Hence  $\widetilde{\Sigma}_{\nu,n+1} = 0$ when $\G$ is linear; otherwise $\|\widetilde{\Sigma}_{\nu,n+1}\| = \bigO(\|\pCov_{n+1}^2\|)$, a second order term with small covariance $\pCov_{n+1}$.
\end{proof}

\begin{proof}[Proof of Lemma~\ref{lemma:UKI-Continuous}]
If the steady state $\Cov$ of \cref{eq:cts-b} is singular, then $\exists v\in R^{N_\theta}$ s.t. $v^T\Cov v = 0$.  We have 
\begin{equation*}
    \begin{split}
\Big(v^T\Cov^{\theta y}u\Big)^2 
&= \Big(\E[v^T (\theta - m)\otimes(\G(\theta) - \G(m))u]\Big)^2 \\
&\leq 
\E[v^T (\theta - m)\otimes(\theta - m)v]
\E[u^T (\G(\theta) - \G(m))\otimes(\G(\theta) - \G(m))u] \\
&= 0,
    \end{split}
\end{equation*}
for any $u \in R^{N_y}$.
This implies that $v^T\Cov^{\theta y} = 0$, and therefore,  $$-2\alpha_0 v^T C v - v^T\Cov^{\theta y} {\Sigma_{\nu}}^{-1}{\Cov^{\theta y}}^T v = 0,$$
which contradicts the assumption that $\Sigma_{\omega} \succ 0$.
\end{proof}





\section{Illustrative Examples for UKS}
\label{sec:app:UKS}

The primary focus of the paper is on using the UKI for optimization
purposes. However the basic ingredients of the method, and the dynamical
system \eqref{eq:uks} in particular, can also be used to
perform approximate posterior sampling from the measure $\mu$ given by \eqref{eq:post}.
In the case where $\mu$ is Gaussian, the posterior is exactly captured by the
steady state of these equations; when the posterior is not Gaussian, then only an approximation
is obtained. To illustrate the UKS,  we consider, in Subsection \ref{ssec:luks},
application to three linear inverse problems from Subsection
\ref{ssec:lin}, for which the posterior is Gaussian if the prior is Gaussian;
and then give a simple example of application to a non-Gaussian posterior
in Subsection \ref{ssec:nuks}.

The UKS equations~(\ref{eq:uks}) can be discretized by the following semi-implicit scheme
\label{sec:app-lin-UKS}
\begin{equation}
\begin{split}
    &\mean_{n+1} - \mean_{n} = h\Big(\Cov^{\theta y} \Sigma_{\eta}^{-1} \bigl(y - \E \G(\theta)\bigr)-C\Sigma_0^{-1}(\mean_{n+1} - r_0)\Big),\\
    &\Cov_{n+1} - \Cov_n     =h\Big(-2\Cov^{\theta y} \Sigma_{\eta}^{-1}{\Cov^{\theta y}}^T - 2\Cov_{n}\Sigma_0^{-1}\Cov_{n} + 2\Cov_{n+1}\Big),
\end{split}
\end{equation}
with a fixed time-step. The integrals defining $\Cov^{\theta y}$ and $\E \G(\theta)$ are explicitly approximated by the modified unscented transform (see ~\cref{def:unscented_tranform}) using the
Gaussian $\N(\mean_n, \Cov_n)$. Integration could also be performed using an adaptive time-step,
as in \cite{cleary2020calibrate}; however more work is needed to develop efficient methods 
stemming from the UKS as formulated here.

\subsection{Linear 2-parameter Model Problem}
\label{ssec:luks}
The linear 2-parameter model problems discussed in \cref{sec:app-lin} are used with prior 
$$r_0 = 0\quad \textrm{ and } \quad \Sigma_0 = \I.$$
Therefore, the posterior distribution is $\mu \sim \N(\mean_{ref}, \Cov_{ref})$, where
\begin{equation}
   \mean_{ref} = \Big(\Sigma_0^{-1} + G^T\Sigma_{\eta}^{-1}G\Big)^{-1}\Big(G^T\Sigma_{\eta}^{-1}y + \Sigma_0^{-1}r_0\Big) \quad\textrm{and} \quad
   \Cov_{ref} = \Big(\Sigma_0^{-1} + G^T\Sigma_{\eta}^{-1}G\Big)^{-1}.
\end{equation}

The UKS is initialized with $\theta_0 \sim \N(r_0, \Sigma_0)$. The convergence of the UKS, in terms of the posterior mean and covariance errors for $t \in [0, 10]$ are reported in \cref{fig:UKS}. Both mean and covariance converge to the posterior mean and covariance. However, even with the semi-implicit scheme the maximum time step that allows for stable simulation is $h = 5\times10^{-5}$.

\begin{figure}[ht]
\centering
    \includegraphics[width=0.49\textwidth]{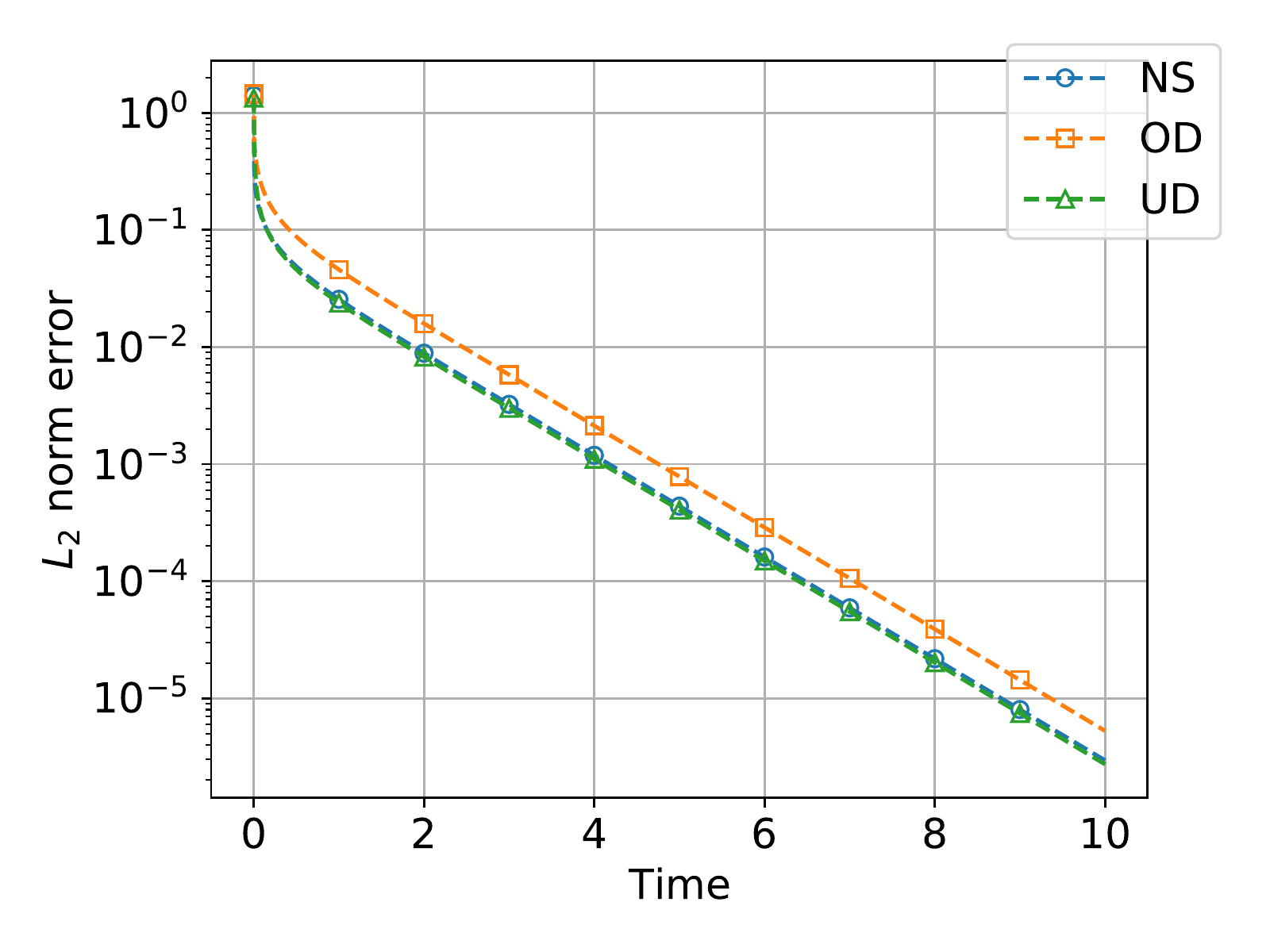}
    \includegraphics[width=0.49\textwidth]{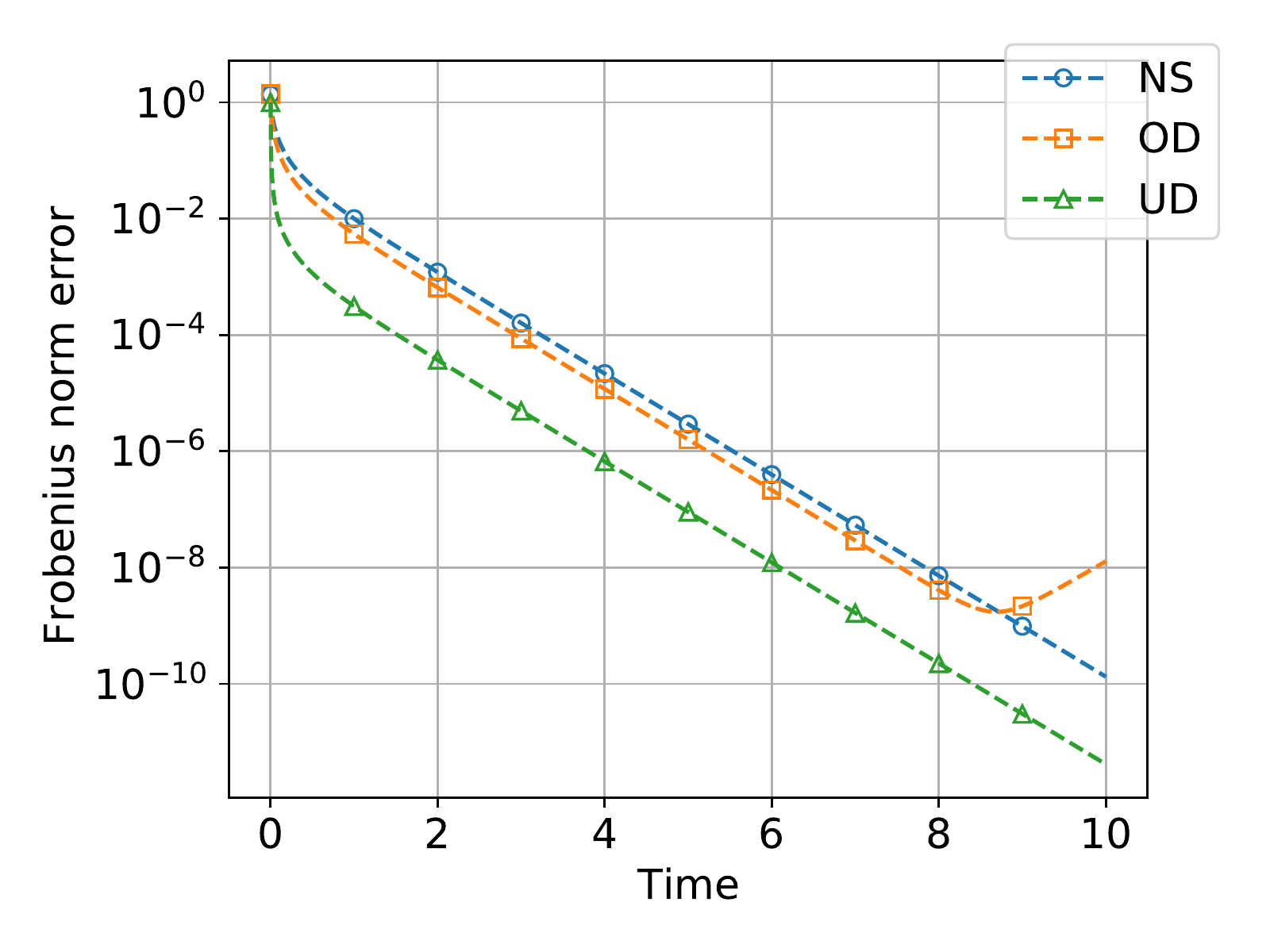}
    \caption{$L_2$ error $\lVert\mean_n - \mean_{ref}\rVert_2$~(left) and Frobenius norm $\lVert\Cov_n - \Cov_{ref}\rVert_F$~(right) obtained by UKS for non-singular~(NS), over-determined~(OD), and under-determined~(UD) systems of the linear 2-parameter model problem.}
    \label{fig:UKS}
\end{figure}

\subsection{Nonlinear 2-Parameter Model Problem}
\label{ssec:nuks}
The following Bayesian logistic regression problem is considered, 
\begin{equation*}
    y = \frac{1}{1 + \exp(\theta_{(1)} + \theta_{(2)}x)} + \eta.
\end{equation*}
Here $N_{\theta} = 2$ and $N_y=1$, and hence this is an under-determined problem.
The prior distribution $\N(r_0, \Sigma_0)$ satisfies 
$$r_0 = [1 \quad 1]^T \quad \textrm{ and } \quad \Sigma_0 = \I.$$
The observation data $y_{ref} = 0.08$ is generated at $x = \frac{1}{2}$, with observation error $\eta \sim\N(0,0.1^2)$ and $\theta_{ref} = [2\quad 2]^T$.

The UKS is initialized with $\theta_0 \sim \N(r_0, \Sigma_0)$. The posterior distributions obtained by the UKS at $t = 10$ with a time step $h = 5\times10^{-5}$ and Markov chain Monte Carlo method~(MCMC) with a step size $1.0$ and $5\times10^6$ samples (with a $10^6$ sample burn-in period) are presented in~\cref{fig:UKS-nonlinear}. The estimated posterior distributions are in 
reasonably good agreement, but of course not as accurate as in the linear setting
in the previous subsection, because of a Gaussian approximation being made to a non-Gaussian
distribution.
Specifically, the posterior mean and covariance estimated by the UKS are 
$$
[1.41 \quad 1.20]^T \quad \textrm{ and } \quad
\begin{bmatrix}
0.526 & -0.235 \\ 
-0.235 & 0.884 \\
\end{bmatrix},
$$
whilst the posterior mean and covariance estimated by the MCMC are 
$$[1.62 \quad 1.31]^T \quad \textrm{ and } \quad
\begin{bmatrix}
0.619 & -0.254 \\ 
-0.254 & 1.00 \\
\end{bmatrix}.
$$
\begin{figure}[ht]
\centering
    \includegraphics[width=0.49\textwidth]{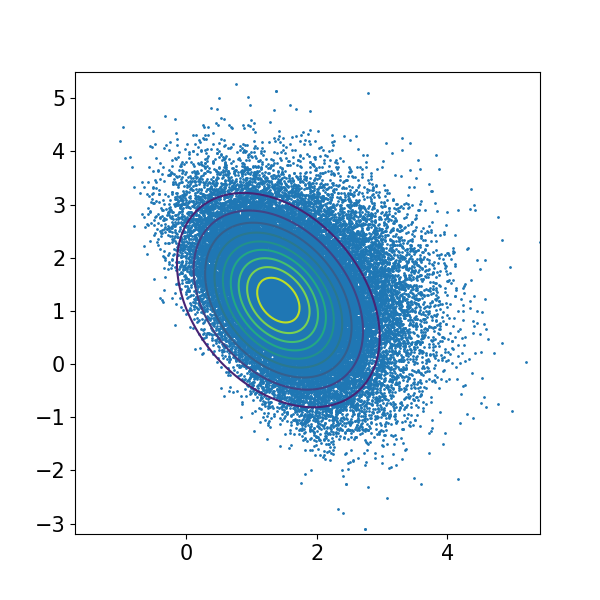}
    \caption{Contour plot: posterior distributions obtained by UKS at $t=10$; blue dots: reference posterior distribution obtained by MCMC for the nonlinear 2-parameter model problem. x-axis is for $\theta_{(1)}$ and y-axis is for $\theta_{(2)}$.}
    \label{fig:UKS-nonlinear}
\end{figure}

\bibliographystyle{unsrt}
\bibliography{references}
\end{document}